\newbox\pullbackbox
\def\pullback{
 \ar@{-}[]+R+<6pt,-1pt>;[]+RD+<6pt,-6pt>%
 \ar@{-}[]+D+<1pt,-6pt>;[]+RD+<6pt,-6pt>}
\def\dottedpullback{%
 \ar@{.}[]+R+<6pt,-1pt>;[]+RD+<6pt,-6pt>%
 \ar@{.}[]+D+<1pt,-6pt>;[]+RD+<6pt,-6pt>}
\def\pushout{%
 \ar@{-}[]+L+<-6pt,1pt>;[]+LU+<-6pt,6pt>%
 \ar@{-}[]+U+<-1pt,6pt>;[]+LU+<-6pt,6pt>}
\newenvironment{tfae}
{
\begin{enumerate}}
{\end{enumerate}}
\numberwithin{figure}{section}
\newcommand{\noproof}{\hfill\qed}
\renewcommand{\binom}{\genfrac{\lgroup}{\rgroup}{0pt}{1}}
\newcommand{\cosmash}{\diamond}
\newcommand{\priem}[1]{#1'}
\newcommand{\comp}{\raisebox{0.2mm}{\ensuremath{\scriptstyle{\circ}}}}
\newcommand{\Hom}{\mathit{Hom}}
\newcommand{\KP}{\mathit{Eq}}
\newcommand{\cl}{\mathit{cl}}
\newcommand{\Grpd}{\mathbf{Grpd}}
\newcommand{\Ab}{\mathbf{Ab}}
\newcommand{\XMod}{\mathbf{XMod}}
\newcommand{\Pt}{\mathbf{Pt}}
\newcommand{\Act}{\mathbf{Act}}
\newcommand{\RG}{\mathbf{RG}}
\newcommand{\TrivAct}{\mathbf{TrivAct}}
\newcommand{\TXMod}{\mathbf{AAXMod}}
\newcommand{\A}{\mathbb{A}}
\newcommand{\C}{\mathbb{C}}
\newcommand{\X}{\mathbb{X}}
\newcommand{\Ext}{\mathbf{Ext}}
\newcommand{\Sub}{\mathbf{Sub}}
\newcommand{\Triv}{\mathbf{Triv}}
\newcommand{\Centr}{\mathbf{Centr}}
\newcommand{\SH}{{\rm (SH)}}
\theoremstyle{plain} 
\newtheorem{theorem}[subsection]{Theorem}
\newtheorem{corollary}[subsection]{Corollary} 
\newtheorem{lemma}[subsection]{Lemma} 
\newtheorem{proposition}[subsection]{Proposition}
\theoremstyle{definition} 
\newtheorem{definition}[subsection]{Definition} 
\newtheorem{construction}[subsection]{Construction}
\theoremstyle{remark}
\newtheorem{remark}[subsection]{Remark}
\newtheorem{example}[subsection]{Example}
\title[Universal Central Extensions]{Universal Central Extensions\\ of Internal Crossed Modules\\ Via the Non-Abelian Tensor Product}
\author{Davide di Micco}
\address[Davide di Micco]{Università degli Studi di Milano, Via Saldini 50, 20133 Milano, Italy}
\email[Davide di Micco]{davide.dimicco@unimi.it}
\author{Tim Van~der Linden}
\address[Tim Van~der Linden]{Institut de Recherche en Math\'ematique et Physique, Universit\'e catholique de Louvain, che\-min du cyclotron~2 bte~L7.01.02, B--1348 Louvain-la-Neuve, Belgium}
\email[Tim Van~der Linden]{tim.vanderlinden@uclouvain.be}
\thanks{The second author is a Research Associate of the Fonds de la Recherche Scientifique--FNRS}
\date{\today}
\begin{document}
\begin{abstract}
In the context of internal crossed modules over a fixed base object in a given semi-abelian category, we use the non-abelian tensor product in order to prove that an object is perfect (in an appropriate sense) if and only if it admits a universal central extension. This extends results of Brown-Loday (\cite{BL87}, in the case of groups) and Edalatzadeh (\cite{Eda19}, in the case of Lie algebras). Our aim is to explain how those results can be understood in terms of categorical Galois theory: Edalatzadeh's interpretation in terms of quasi-pointed categories applies, but a more straightforward approach based on the theory developed in a pointed setting by Casas and the second author~\cite{CVdL14} works as well.
\end{abstract}

\keywords{Semi-abelian category, crossed module, crossed square, commutator, non-abelian tensor product, universal central extension}

\subjclass[2010]{17B99, 18D35, 18E10, 18G60, 20J15}

\maketitle

\section{Introduction}
The aim of this article is to study a result on universal central extensions of crossed modules due to Brown-Loday (\cite{BL87}, in the case of groups) and Edalatzadeh (\cite{Eda19}, in the case of Lie algebras). We prove, namely, that a crossed module over a fixed base object is perfect (in an appropriate sense) if and only if it admits a universal central extension. We first follow an ad-hoc approach, extending the result to the context of Janelidze-M\'arki-Tholen semi-abelian categories~\cite{JMT02} by using a general version, developed in~\cite{dMVdL19.3} of the non-abelian tensor product of Brown-Loday~\cite{BL87}. We then provide two interpretations from the perspective of categorical Galois theory. The first one follows the line of Edalatzadeh~\cite{Eda19} in the context of quasi-pointed~\cite{Bou01} categories (which have an initial object $0$ and a terminal object~$1$ such that $0\to 1$ is a monomorphism). This allows us to capture centrality, but we could not find a natural way to treat perfectness in this setting. We then switch to the pointed context ($0\cong 1$) where the theory developed by Casas and the second author~\cite{CVdL14} can be used. In this simpler environment we find a convenient interpretation both of centrality and perfectness.

The text is structured as follows. In Section~\ref{Section-2} we give an overview of basic definitions and results of categorical Galois theory, with particular emphasis on the example of so-called \emph{algebraically central extensions}. In Section~\ref{Section-3} we develop a more advanced example: the \emph{coinvariants reflection} from actions to trivial actions. A~key result here is Proposition~\ref{prop:birkhoff subcategory of actions}, which says that for any object $L$ a semi-abelian category, the trivial $L$-actions form a Birkhoff subcategory of the category of all $L$-actions. 

In Section~\ref{Section Internal crossed modules} we switch to the context of $L$-crossed modules. Here we recall basic aspects, results from commutator theory, the non-abelian tensor product, etc. In Section~\ref{Section Ad Hoc} we make an ad-hoc study of perfect objects and universal central extensions in this context, especially in relation to the tensor product. We prove our first main result, Theorem~\ref{thm:perfect iff admits a universal central extension}, which says that in any semi-abelian category satisfying the so-called \emph{Smith is Huq} condition \SH, an $L$-crossed module is perfect if and only if it admits a universal central extension. 

The last two sections of the article are devoted to two Galois-theoretic points of view on this result. In Section~\ref{Section Quasipointed} we consider a Galois theory in the quasi-pointed category $\XMod_L(\A)$ of $L$-crossed modules in $\A$, where we manage to give an interpretation of the central extensions (Theorem~\ref{cor:equivalent condition to centrality}). In Section~\ref{Section Pointed} we view an $L$-crossed module as an object of the semi-abelian category $\XMod(\A)$ and find a different Galois structure which characterises both the central extensions (Proposition~\ref{prop:equivalence between two concepts of universal central extension}) and the perfect objects (Proposition~\ref{prop:equivalence between two concepts of perfect object}).

\section{Revision of Galois theory and central extensions}\label{Section-2}
We recall some basic definitions and results of categorical Galois theory \cite{BJ01, Jan90, JK94, JK00}, especially in relation with algebraically central extensions.

A \emph{regular epimorphism} is a coequaliser of some pair of parallel arrows.

\begin{definition}
\label{defi:birkhoff subcategory}
Let $\C$ be an exact category and $\X$ a subcategory of $\C$. We say that $\X$ is a \emph{Birkhoff subcategory of $\C$} if the following hold:
\begin{enumerate}
\item $\X$ is a full and reflective subcategory of $\C$,
\item $\X$ is closed under subobjects in $\C$ and
\item $\X$ is closed under (regular epimorphic) quotients in $\C$.
\end{enumerate}
We usually denote the left adjoint as $I\colon\C\to\X$ and, when we do not omit it, the right adjoint as $H\colon\X\to\C$. The largest Birkhoff subcategory of $\C$ is obviously $\C$ itself, whereas the smallest one is given by $\Sub(1)$ where $1$ denotes the terminal object. When $\C$ is a variety, a Birkhoff subcategory is the same as a subvariety.
\end{definition}

\begin{lemma}[\cite{JK94}]
\label{lemma:equivalent conditions to Birkhoff}
A reflective subcategory $\X$ of an exact category $\C$ is a Birkhoff subcategory if and only if for each regular epimorphism $f\colon A\to B$, the naturality square
\begin{equation}\label{diag:Birk}
\vcenter{
\xymatrix{
A \ar[r]^-{\eta_A} \ar[d]_-{f} & HI(A) \ar[d]^-{HI(f)}\\
B \ar[r]_-{\eta_B} & HI(B)
}}
\end{equation}
is a pushout of regular epimorphisms.\noproof
\end{lemma}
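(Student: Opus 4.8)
The plan is to prove both directions with the help of a few standard facts about the exact (in particular regular) category $\C$: regular epimorphisms compose; if $g\comp h$ is a regular epimorphism then so is $g$; regular epimorphisms are stable under pushout; and a morphism that is at once a monomorphism and a regular epimorphism is an isomorphism. Throughout I treat $\X$ as a \emph{full} reflective subcategory, as in the definition of a Birkhoff subcategory, so that the unit component $\eta_X$ is an isomorphism as soon as $X$ lies in $\X$. For the forward implication, the first thing I would record is that, when $\X$ is Birkhoff, every unit $\eta_A\colon A\to HI(A)$ is a regular epimorphism. Taking the (regular epimorphism, monomorphism)-factorisation $A\twoheadrightarrow J\rightarrowtail HI(A)$ of $\eta_A$, closure under subobjects gives $J\in\X$, and the universal property of $\eta_A$ then exhibits the monomorphism $J\rightarrowtail HI(A)$ as a split epimorphism, hence an isomorphism; so $\eta_A$ is a regular epimorphism. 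For a regular epimorphism $f\colon A\to B$ this makes $\eta_A$ and $\eta_B$ regular epimorphisms, and since $HI(f)\comp\eta_A=\eta_B\comp f$ is a composite of regular epimorphisms, the cancellation property forces $HI(f)$ to be one too. Thus all four edges of the square~\eqref{diag:Birk} are regular epimorphisms.

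To establish the pushout property I would form the genuine pushout $P$ of the regular epimorphisms $\eta_A$ and $f$, with legs $p\colon HI(A)\to P$ and $q\colon B\to P$ (both regular epimorphisms, by pushout-stability) and induced comparison $c\colon P\to HI(B)$ determined by $c\comp p=HI(f)$ and $c\comp q=\eta_B$. The decisive point---and the step I expect to be the crux of the argument---is that $P$ itself lies in $\X$: it is a regular-epimorphic quotient of $HI(A)\in\X$ via $p$, so closure under quotients applies. Consequently $q$ factors through the reflection, $q=\bar q\comp\eta_B$ for a unique $\bar q\colon HI(B)\to P$, and a short diagram chase---using the uniqueness clause in the universal property of $\eta_B$ together with the joint epimorphy of $(p,q)$---shows that $c$ and $\bar q$ are mutually inverse. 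Hence the naturality square is isomorphic to the pushout $P$, i.e.\ it is itself a pushout of regular epimorphisms.

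For the converse I assume that the naturality square of every regular epimorphism is a pushout of regular epimorphisms, and I deduce the two closure conditions. Instantiating the hypothesis at $f=\mathrm{id}_A$ already shows that each $\eta_A$ is a regular epimorphism. To see that $\X$ is closed under subobjects, let $m\colon S\rightarrowtail X$ be a monomorphism in $\C$ with $X\in\X$; naturality gives $HI(m)\comp\eta_S=\eta_X\comp m$, and the right-hand side is a monomorphism because $\eta_X$ is invertible and $m$ is monic, so $\eta_S$ is a monomorphism as well. Being simultaneously a regular epimorphism, $\eta_S$ is then an isomorphism, whence $S\in\X$. To see that $\X$ is closed under quotients, apply the hypothesis to a regular epimorphism $q\colon X\to C$ with $X\in\X$: since $\eta_X$ is invertible, the pushout square has an invertible edge, which forces the parallel edge $\eta_C$ to be invertible, so that $C\in\X$. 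Of the whole argument, the identification of $P$ with $HI(B)$ in the forward direction is the one place where both Birkhoff conditions are genuinely used, and it is where I anticipate the real work.
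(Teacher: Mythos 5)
Your overall strategy is the standard one (note that the paper itself offers no proof here, deferring to Janelidze--Kelly), and most of your steps are sound: the image-factorisation argument showing that each unit $\eta_A$ is a regular epimorphism correctly isolates where closure under subobjects is used, the cancellation argument making $HI(f)$ a regular epimorphism is valid in any regular category, and your converse direction is complete. The genuine gap sits exactly at the step you yourself call the crux: you ``form the genuine pushout $P$ of $\eta_A$ and $f$''. The lemma is stated for an arbitrary (Barr-)exact category $\C$, and exactness guarantees only quotients of effective equivalence relations; it provides no coequalisers of general parallel pairs and, in particular, no pushouts of regular epimorphisms along regular epimorphisms. Such a pushout amounts to quotienting $A$ by the join of two equivalence relations, and that join need not exist in a bare exact category---compare the paper's own citation of Carboni--Kelly--Pedicchio, where the existence and regularity of these pushouts is tied to the exact Mal'tsev condition. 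So your object $P$ may fail to exist, and the comparison $c\colon P\to HI(B)$ cannot get started. Your argument is correct in the settings where the paper actually applies the lemma (semi-abelian, exact Mal'tsev), but not at the stated level of generality.

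The repair is cheap and preserves all of your ideas: verify the universal property of the square \eqref{diag:Birk} directly rather than against an abstract pushout. Given $u\colon HI(A)\to Z$ and $v\colon B\to Z$ with $u\comp\eta_A=v\comp f$, factor $u=m\comp e$ with $e\colon HI(A)\to U$ a regular epimorphism and $m\colon U\to Z$ a monomorphism; closure under quotients gives $U\in\X$, which is where your ``crux'' observation reappears. Writing $(r_1,r_2)$ for the kernel pair of $f$, from $m\comp e\comp \eta_A\comp r_1=u\comp\eta_A\comp r_1 = v\comp f\comp r_1 = v\comp f\comp r_2 = m\comp e\comp\eta_A\comp r_2$ and monicity of $m$ we get $e\comp\eta_A\comp r_1=e\comp\eta_A\comp r_2$, so $e\comp \eta_A$ factors through $f=\mathrm{coeq}(r_1,r_2)$ as $v'\comp f$; since $U\in\X$, we may write $v'=w'\comp\eta_B$, and $w\coloneqq m\comp w'$ is the required filler: $w\comp HI(f)=u$ follows after cancelling the epimorphism $\eta_A$, and $w\comp \eta_B=v$ after cancelling $f$, while uniqueness is immediate because $\eta_B$ is an epimorphism. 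Everything else in your write-up, including both halves of the converse, can stand as written.
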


Recall that a commutative square is a \emph{regular pushout} when all of its arrows, as well as the induced comparison to the pullback, are regular epimorphisms---see Figure~\ref{Figure Regular Pushout}. 
\begin{figure}
$\vcenter{\xymatrix{A_1\ar@/^/[rrd]^-{f_1} \ar@/_/[rdd]_-a \ar[rd]|-{\langle a,f_1\rangle} &&\\
&A_0\times_{B_0}B_1 \pullback\ar[r]\ar[d] &B_1\ar[d]^-b\\
&A_0\ar[r]_-{f_0} &B_0}}$
\caption{The outer square is a \emph{regular pushout} when all arrows in the induced diagram are regular epimorphisms.}\label{Figure Regular Pushout}
\end{figure}
In general, pushouts and regular pushouts do not coincide; by Theorem~5.7 in~\cite{CKP93} however, a regular category is an exact Mal'tsev category precisely when every pushout of two regular epimorphisms is a regular pushout. In particular, this is true in every semi-abelian category. 

\begin{lemma}\cite[Lemma 1.1]{BG02}
\label{lemma:ker of regular pushout is regular epimorphism}
In a semi-abelian category, consider a square $\beta\comp f=f'\comp \alpha$ of regular epimorphisms 
\[
\xymatrix{
K_f \ar@{.>}[d]_-{k} \ar[r]^-{k_f} & A \ar[r]^-{f} \ar[d]_-{\alpha} & B \ar[d]^-{\beta}\\
K_{f'} \ar[r]_-{k_{f'}} & A' \ar[r]_-{f'} & B'
}
\]
and take the kernels of $f$ and $f'$. The induced morphism $k$ is a regular epimorphism if and only if the given square is a regular pushout.\noproof
\end{lemma}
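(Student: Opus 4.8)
The plan is to reduce the whole statement to the behaviour of the comparison morphism $\varphi=\langle\alpha,f\rangle\colon A\to P$, where $P=A'\times_{B'}B$ is the pullback of $f'$ along $\beta$, with projections $\pi_{A'}\colon P\to A'$ and $\pi_B\colon P\to B$. Since $f'$ and $\beta$ are regular epimorphisms and regular epimorphisms are pullback-stable in a regular category, both projections $\pi_{A'}$ and $\pi_B$ are automatically regular epimorphisms; matching against the figure defining a regular pushout, the only condition remaining beyond ``$\alpha$, $\beta$, $f$, $f'$ regular epimorphisms'' is that $\varphi$ itself be a regular epimorphism. So it suffices to prove the equivalence
\[
\varphi \text{ is a regular epimorphism} \iff k \text{ is a regular epimorphism.}
\]
To connect the two, I would first identify $\ker\pi_B$ with $K_{f'}$ via the monomorphism $x\mapsto(k_{f'}(x),0)$, and then observe that, because $\pi_B\comp\varphi=f$, the morphism $\varphi$ restricts along the kernels to a map $K_f\to\ker\pi_B\cong K_{f'}$ which is exactly $k$ (it sends $k_f$ to $(\alpha k_f,0)=(k_{f'}k,0)$). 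As $f$ and $\pi_B$ are regular, hence normal, epimorphisms, the rows $K_f\rightarrowtail A\twoheadrightarrow B$ and $K_{f'}\rightarrowtail P\twoheadrightarrow B$ are short exact sequences, and $(k,\varphi,1_B)$ is a morphism between them.

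For the implication ``$\varphi$ regular epimorphism $\Rightarrow$ $k$ regular epimorphism'' I would show that the left-hand square
\[
\vcenter{\xymatrix{K_f \ar[r]^-{k_f}\ar[d]_-{k} & A\ar[d]^-{\varphi}\\ K_{f'}\ar[r] & P}}
\]
is a pullback: since $\pi_B\comp\varphi=f$, the preimage $\varphi^{-1}(\ker\pi_B)$ equals $\ker f=K_f$, so this square is the pullback of the inclusion $K_{f'}\to P$ along $\varphi$. Then $k$, being obtained by pulling back the regular epimorphism $\varphi$, is a regular epimorphism by pullback-stability.

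The converse is the crux. Assuming $k$ is a regular epimorphism, I would take the (regular epimorphism, monomorphism)-factorisation $A\twoheadrightarrow I\rightarrowtail P$ of $\varphi$ and prove $I=P$. On the one hand, the image of $\varphi k_f=(k_{f'}k,0)$ equals the image of the inclusion $K_{f'}\to P$ precisely because $k$ is a regular epimorphism, so $\ker\pi_B=K_{f'}\leq I$. On the other hand, $\pi_B\comp\varphi=f$ is a regular epimorphism, whence $\pi_B$ restricted to $I$ is still a regular epimorphism onto $B$, with kernel $I\wedge K_{f'}=K_{f'}$. Thus $I\rightarrowtail P$ sits inside a morphism of short exact sequences $K_{f'}\rightarrowtail I\twoheadrightarrow B$ and $K_{f'}\rightarrowtail P\twoheadrightarrow B$ that is the identity on both $K_{f'}$ and $B$; by the short five lemma, valid in any semi-abelian (indeed homological) category, it is an isomorphism, so $\varphi$ is a regular epimorphism. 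I expect this last step to be the main obstacle: while the forward direction is a pure pullback-stability argument, the converse forces one to extract a genuine short exact sequence from the image $I$ and to invoke protomodularity through the short five lemma.
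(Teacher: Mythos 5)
Your proof is correct. Note first that the paper itself offers no argument for this lemma: it is stated with a citation to \cite[Lemma~1.1]{BG02} and no proof, so your write-up supplies exactly what the paper outsources. Your reduction is the standard one: since $\alpha$, $\beta$, $f$, $f'$ are given regular epimorphisms and the pullback projections of $P=A'\times_{B'}B$ are regular epimorphisms by pullback-stability, regular-pushoutness is equivalent to the comparison $\varphi=\langle\alpha,f\rangle$ being a regular epimorphism; the identifications $\ker\pi_B\cong K_{f'}$ and $\varphi\comp k_f=\langle k_{f'}\comp k,0\rangle$ are right, and the forward direction via the pullback square $\ker(\pi_B\comp\varphi)=K_f$ is a clean stability argument. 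For the converse, it is worth observing that what you prove by hand---via the image factorisation $A\twoheadrightarrow I\rightarrowtail P$, the computation $\ker(\pi_B\comp m)=I\wedge K_{f'}=K_{f'}$, and the short five lemma---is precisely an instance (with identity on the quotient part) of the ``Short Five Lemma for regular epimorphisms'', item~5 of \cite[Lemma~4.2.5]{BB04}, which this very paper invokes in the proofs of Lemma~\ref{lemma:regular epimorphisms in XMOD(A)} and Proposition~\ref{prop:delta_M is a regular epimorphism}: applied to the morphism $(k,\varphi,1_B)$ between the exact rows $K_f\rightarrowtail A\twoheadrightarrow B$ and $K_{f'}\rightarrowtail P\twoheadrightarrow B$, it yields the converse in one line. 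So citing that lemma would shorten your argument, while your version has the merit of re-proving it in the case needed, making the whole proof self-contained modulo protomodularity (the ordinary short five lemma) and the regularity facts you use, all of which are valid in the semi-abelian setting.
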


\subsection{Central extensions}
In the exact Mal'tsev context, for each Birkhoff subcategory there is a \emph{Galois theory}; we recall the main definitions having to do with central extensions.

\begin{definition}
We denote with $\Ext_B(\C)$ the \emph{category of extensions of $B$ in $\C$}, which is the full subcategory of $\C/B$ whose objects are the regular epimorphisms having $B$ as codomain; notice that a morphism in $\Ext_B(\C)$ is any triangle in $\C$ from a regular epimorphism to another regular epimorphism with the same codomain $B$.
\end{definition}

\begin{definition}
Given a Birkhoff subcategory $\X$ of $\C$ we say that an extension $f\colon A\to B$ is an \emph{$\X$-trivial extension (of $B$)} when the naturality square \eqref{diag:Birk} is a pullback in $\C$. We will denote with $\Triv_B(\C,\X)$ the full subcategory of $\Ext_B(\C)$ whose objects are the $\X$-trivial extensions of $B$.
\end{definition}

\begin{definition}
\label{defi:central extensions}
Given a Birkhoff subcategory $\X$ of $\C$ we say that an extension $f\colon A\to B$ is an \emph{$\X$-central extension (of $B$)} when there exists an extension $g\colon C\to B$ such that the pullback $g^*(f)$
\[
\xymatrix{
A\times_B C \pullback \ar[r]^-{g^*(f)} \ar[d] & C \ar[d]^-{g}\\
A \ar[r]_-{f} & B
}
\]
of $f$ along $g$ is an $\X$-trivial extension.
We will denote by $\Centr_B(\C,\X)$ the full subcategory of $\Ext_B(\C)$ whose objects are the $\X$-central extensions of $B$. We have the chain of inclusions
\[
\Triv_B(\C,\X)\subseteq \Centr_B(\C,\X)\subseteq \Ext_B(\C).
\]
\end{definition}

\begin{lemma}
\label{lemma:equivalent conditions to central}
In the context of an exact protomodular category $\C$, an extension $f\colon A\to B$ is $\X$-central if and only if any of the projections $\pi_1$, $\pi_2$ in the kernel pair
\[
\KP(f) =(A\times_BA,\pi_1,\pi_2)
\qquad\qquad
\vcenter{\xymatrix{
A\times_BA \pullback \ar[r]^-{\pi_2} \ar[d]_-{\pi_1} & A \ar[d]^-{f} \\
A \ar[r]_-{f} & B
}}
\]
is an $\X$-trivial extension. Furthermore, a split epimorphism is an $\X$-central extension if and only if it is $\X$-trivial.
\end{lemma}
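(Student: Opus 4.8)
The plan is to reduce the whole statement to two standard features of the Birkhoff reflection $I\colon\C\to\X$ in this setting (recall that every protomodular category is Mal'tsev, so $\C$ is exact Mal'tsev): that $\X$-trivial extensions are stable under pullback along arbitrary morphisms, and that $\X$-triviality is \emph{reflected} by pullback along regular epimorphisms. Both are consequences of the admissibility of Birkhoff reflections in the exact Mal'tsev context \cite{CKP93, JK94}, and I would isolate them at the outset as the engine of the argument, after which everything else is formal manipulation of pullbacks and kernel pairs.

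For the easy direction, suppose $\pi_1$ is $\X$-trivial. Since $\pi_1$ is exactly the pullback $f^*(f)$ of $f$ along itself, taking $g=f$ in Definition~\ref{defi:central extensions} witnesses that $f$ is $\X$-central. The interchangeability of $\pi_1$ and $\pi_2$ is immediate: the twisting isomorphism $\tau\colon A\times_BA\to A\times_BA$ satisfies $\pi_1\comp\tau=\pi_2$, so it is an isomorphism of extensions of $A$ between $\pi_2$ and $\pi_1$, and $\X$-triviality is invariant under isomorphisms of extensions.

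For the converse, suppose $f$ is $\X$-central, witnessed by an extension $g\colon C\to B$ for which $\bar f\coloneqq g^*(f)\colon A\times_BC\to C$ is $\X$-trivial. The key computation is that the kernel pair of $\bar f$ is a pullback of the kernel pair of $f$: there is a canonical isomorphism $\KP(\bar f)\cong(A\times_BA)\times_BC$ under which the first projection of $\KP(\bar f)$ corresponds to $p^*(\pi_1)$, where $p\colon A\times_BC\to A$ is the pullback of $g$ along $f$, hence a regular epimorphism. By pullback-stability, the $\X$-triviality of $\bar f$ forces the $\X$-triviality of its first kernel pair projection $\bar f^*(\bar f)$, so $p^*(\pi_1)$ is $\X$-trivial; since $p$ is a regular epimorphism, descent then yields that $\pi_1$ itself is $\X$-trivial. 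I expect this descent step to be the main obstacle: it is exactly where admissibility of the Birkhoff reflection, and thus the exactness and protomodularity of $\C$, are genuinely used, whereas the isomorphism $\KP(\bar f)\cong(A\times_BA)\times_BC$ is a routine pullback identification.

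Finally, for the ``furthermore'' one inclusion is just the general chain $\Triv_B(\C,\X)\subseteq\Centr_B(\C,\X)$. For the other, let $f$ be an $\X$-central split epimorphism with section $s\colon B\to A$. By the part already proved, $\pi_1$ is $\X$-trivial, and a direct diagram chase shows that $f$ is recovered as the pullback $s^*(\pi_1)$ of $\pi_1$ along $s$ (informally, the fibre of $s^*(\pi_1)$ over an element of $B$ is its $f$-preimage). Pullback-stability of $\X$-trivial extensions, applied to the monomorphism $s$, then shows that $f$ is $\X$-trivial, completing the proof.
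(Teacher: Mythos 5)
Your formal reductions are all correct: the easy direction via $g=f$, the $\pi_1$/$\pi_2$ symmetry through the twist isomorphism, the identification $\KP(\bar f)\cong(A\times_BA)\times_BC$ under which the first projection of $\KP(\bar f)$ becomes $p^*(\pi_1)$, and the recovery $f\cong s^*(\pi_1)$ in the split case all check out, granting pullback-stability of $\X$-trivial extensions along arbitrary morphisms (which is indeed a consequence of admissibility of Birkhoff reflections in the exact Mal'tsev context). In fact this skeleton is exactly the reduction carried out in Proposition~4.7 of~\cite{JK94}, which the paper simply cites together with Theorem~4.8 rather than proving anything directly.

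The gap is your second ``engine'': the claim that $\X$-triviality is \emph{reflected} by pullback along regular epimorphisms is false as stated, and it is not a consequence of admissibility. If it held for arbitrary extensions, then every $\X$-central extension would already be $\X$-trivial: by Definition~\ref{defi:central extensions} there is a regular epimorphism $g$ with $g^*(f)$ trivial, and reflection along $g$ would immediately force $f$ trivial, collapsing the inclusion $\Triv_B(\C,\X)\subseteq\Centr_B(\C,\X)$ to an equality. Concretely, for $\C$ the category of groups and $\X=\Ab(\C)$, the quotient $Q_8\to Q_8/Z(Q_8)\cong V_4$ is central but not trivial (its trivialisation would force $Q_8\cong V_4\times_{V_4}V_4$), yet it becomes trivial after pullback along a free presentation of $V_4$. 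What your argument actually needs is reflection of triviality along regular epimorphisms \emph{for the split epimorphism} $\pi_1$ only; this restricted statement is true in exact Goursat (hence Mal'tsev, hence protomodular) categories, but it is precisely the hard content of the lemma---by your own final paragraph it is equivalent to ``split epimorphisms that are central are trivial'', which is Theorem~4.8 of~\cite{JK94}, proved there by a genuine argument using exactness and the Goursat property (regular images of equivalence relations, together with the Birkhoff pushout property of Lemma~\ref{lemma:equivalent conditions to Birkhoff}), not by formal admissibility. So the proposal is circular at the decisive step: everything surrounding the descent claim is the formal part, and the one assertion doing real work is assumed rather than proved. To repair it, you must either prove the split-epimorphism descent statement directly in the exact Mal'tsev setting or cite Theorem~4.8 of~\cite{JK94}, as the paper does.
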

\begin{proof}
Proposition~$4.7$ in~\cite{JK94} tells us that the two claims are equivalent while Theorem~$4.8$ proves that they hold in every Goursat category. Protomodularity implies the Mal'tsev property, which is stronger than the Goursat property.
\end{proof}

\subsection{Example: algebraically and categorically central extensions}
\label{higgins commutator and central extensions}
A key example of a Birkhoff subcategory is the subcategory $\Ab(\A)$ of abelian objects in any semi-abelian category $\A$, which are those objects that admit an internal abelian group structure. For instance, abelian groups in the category of all groups, or vector spaces equipped with a trivial (zero) multiplication in any category of Lie algebras over a field. It is clear that $\Ab(\A)$ is an abelian category, but it is also a Birkhoff subcategory of $\A$: indeed it is a full reflective subcategory of $\A$, closed under subobjects and regular quotients. 

This means that we have a definition of $\Ab(\A)$-central extensions, also called \emph{categorically central extensions} in contrast with \emph{algebraically central extensions}: the former ones are given through Definition~\ref{defi:central extensions}, whereas the latter ones arise naturally from commutator theory. As it turns out, the two types of central extensions coincide~\cite{BG02}; let us elaborate on this. 

Consider a cospan $(k\colon K\to X, l\colon L\to X)$. By definition~\cite{MM10,HVdL11,Hig56}, the \emph{Higgins commutator} $[K,L]\leq X$ is computed as in the commutative diagram
\[
\xymatrix{
 0 \ar[r] & K \cosmash L \ar[r]^{\iota_{K,L}} \ar[d] & K + L \ar[r]^{r_{K,L}} \ar[d]^-{\binom{k}{l}} 
 & K \times L \ar[r] & 0 \\
 & [K,L] \ar[r] & X
}
\]
where $r_{K,L}=\bigl\lgroup\begin{smallmatrix}
	1_K & 0 \\ 0 & 1_L
\end{smallmatrix}\bigr \rgroup$ is the canonical morphism from the coproduct to the product, $\iota_{K,L}$ is its kernel and $[K,L]$ is the image of the composite $\binom{k}{l}\comp \iota_{K,L}$. The object $K\cosmash L$ is called the \emph{co-smash product}~\cite{CJ03} of $K$ and $L$. 

The object $X$ is abelian if and only if $[X,X]=0$, so that the reflector ${\A\to\Ab(\A)}$ sends an object $X$ to the quotient $X/[X,X]$. 

Theorem~6.3 in~\cite{MM10} says that in a semi-abelian category, a subobject $K\leq X$ is normal (we write $K\lhd L$) if and only if $[K,X]\leq K$. Proposition~4.14 in~\cite{HL13} adds to this that the normal closure $\cl_X(K)$ of $K$ in $X$---a priori, the kernel of the cokernel of a representing monomorphism $K\to X$---may be obtained as the join $K\vee [K,X]\lhd X$. 

\emph{Algebraic centrality} of an extension $f\colon A\to B$ is now the condition that $[K,A]$ is trivial, for $K$ the kernel of $f$. Equivalently, by \cite{GVdL08} combined with~\cite{Gra02,JK00}, this may be expressed in terms of the kernel pair of $f$. The main result of~\cite{BG02} says that algebraically and categorically central extensions coincide. This implies right away that in the cases of groups and Lie algebras, we regain the classical definitions.

\subsection{Universal central extensions}
The following definitions are borrowed from the article~\cite{CVdL14}, where the theory of universal central extensions is explored in detail. We consider a Birkhoff subcategory $\X$ of a pointed exact Mal'tsev category $\C$.

\begin{definition}
We say that an extension $u\colon U\to B$ is a \emph{universal $\X$-central extension of $B$} if it is an initial object in $\Centr_B(\C,\X)$.
\end{definition}

\begin{definition}
\label{defi:perfect object in pointed setting}
We say that an object $A\in\C$ is \emph{$\X$-perfect} whenever its reflection $I(A)$ is the zero object $0\in\X$.
\end{definition}

Via the analysis in~\ref{higgins commutator and central extensions}, these definitions capture the usual ones for groups and Lie algebras. A key result in this general context is~\cite[Theorem~3.5]{CVdL14}, which says that an object in a semi-abelian category $\A$ is perfect with respect to a Birkhoff subcategory $\X$ of $\A$ if and only if it admits a universal $\X$-central extension.

\subsection{More on trivial extensions}\label{subsec:TrivialExtensionReflection}
Given a Birkhoff subcategory $\X$ of an exact Mal'tsev category $\C$, it is well known and easy to see that the category $\Triv_B(\C,\X)$ is again reflective in $\Ext_B(\C)$: reflect the given extension into $\X$, then pull back along the unit. In the setting~\ref{higgins commutator and central extensions} of a semi-abelian category $\A$ with its Birkhoff subcategory of abelian objects $\Ab(\A)$, we may restrict the left adjoint to the split epimorphisms in $\A$, and via the results in~\cite{EverVdL1} find the following. Let $f\colon A\to B$ be a split epimorphism, with splitting $s$ and kernel $k\colon K\to A$; then the unit of the adjunction at $f$ gives rise to the morphism of split short exact sequences 
\[
\xymatrixcolsep{4pc}
\xymatrix{
0 \ar[r] & K \ar[d] \ar[r]^-k & A \ar[d] \ar@<.5ex>[r]^-{f} & B \ar@{=}[d] \ar@<.5ex>[l]^-{s} \ar[r] & 0\\
0 \ar[r] & \frac{K}{[K,A]} \ar[r]_-{\langle 1_{{K}/{[K,A]}},0\rangle} & \frac{K}{[K,A]}\times B \ar@<.5ex>[r]^-{\pi_2} & B \ar@<.5ex>[l]^-{\langle 0,1_B\rangle} \ar[r] & 0.
}
\]
Note, in particular, that the object $K/[K,A]$ is abelian.

\section{Actions, trivial actions, coinvariants}\label{Section-3}
In this section we work out a less trivial example of a Galois structure, which later on will be useful for us: we study the so-called \emph{coinvariants reflector} from internal actions to trivial actions. This is a categorical conceptualisation of a classical construction, well known in group cohomology: see~\cite{Br82}, for instance. It generalises the result of~\ref{subsec:TrivialExtensionReflection} to split extensions with a non-abelian kernel.

We start by recalling some well-known basic results on limits and colimits, easily checked by hand, in the category of points over a fixed base object $L$ in a given semi-abelian category $\A$.

\subsection{Points, actions, split extensions}
A \emph{point} $(p,s)$ in a category $\A$ is a split epimorphism $p\colon X\to L$ together with a chosen splitting $s\colon L\to X$, so that $p\comp s=1_L$. The category $\Pt(\A)$ of \emph{points in $\A$} has, as objects, points in $\A$, and as morphisms, natural transformations between such. Fixing the object $L$, we obtain the subcategory $\Pt_L(\A)$ of \emph{points over $L$}. If $\A$ is a semi-abelian category, then a point $(p,s)$ with a chosen kernel $k$ of $p$ is the same thing as a \emph{split extension} in~$\A$: a split short exact sequence
\[
\xymatrix{0 \ar[r] & K \ar[r]^{k} & X \ar@<.5ex>[r]^-p & L \ar@<.5ex>[l]^-s \ar[r] & 0,}
\]
which means that $k$ is the kernel of $p$, that $p$ is the cokernel of $k$, and that $p\comp s=1_L$. In such a split extension, $k$ and $s$ are jointly extremal-epimorphic. 
Via a semi-direct product construction~\cite{BJ98}, we have an equivalence $\Pt_L(\A)\simeq\Act_L(\A)$, where the latter category of \emph{internal $L$-actions} in $\A$ consists of the algebras of the monad $(L\flat (-),\eta^L,\mu^L)$ defined through 
\[
\xymatrix{0 \ar[r]& L\flat M\ar[r]^-{k_{L,M}} & L+M \ar@<.5ex>[r]^-{\binom{1_L}{0}} & L\ar[r] \ar@<.5ex>[l]^-{\iota_L} & 0.}	
\]
One functor in the equivalence sends a point $(p,s)$ to the action $(L,K,\xi)$ in
\begin{align*}
\xymatrix{
0 \ar[r] & L\flat K \pullback \ar@{.>}[d]_-{\xi} \ar[r]^-{k_{L,K}} & L+K \ar[d]^-{\binom{s}{k}} \ar[r]^-{\binom{1_L}{0}} & L\ar@{=}[d] \ar[r] & 0\\
0 \ar[r] & K \ar[r]_-{k} & X \ar[r]_-{p} & L\ar[r] & 0.
}
\end{align*}
The other functor sends an action $(L,M,\xi)$ to the induced semidirect product, which is the point $({\pi_\xi\colon M\rtimes_{\xi} L \to L}, {i_\xi\colon L\to M\rtimes_{\xi} L })$, where $M\rtimes_{\xi} L$ is the coequaliser
\[
\xymatrixcolsep{4pc}
\xymatrixrowsep{3pc}
\xymatrix{
L\flat M \ar@<4pt>[r]^-{i_M\circ\xi} \ar@<-1pt>[r]_-{k_{L,M}} & L+M \ar[r]^{\sigma_{\xi}} & M\rtimes_{\xi} L,
}
\]
the morphism $\pi_\xi\colon M\rtimes_{\xi} L\to L$ is the unique morphism such that $\binom{1_L}{0}=\pi_{\xi}\comp \sigma_{\xi}$, and finally $i_\xi=\sigma_\xi\comp i_L$. 
We will denote $M\rtimes_{\xi} L$ as $M\rtimes L$ if there is no risk of confusion regarding the action involved. The morphism $k_{\xi}\coloneqq\sigma_{\xi}\comp i_M\colon M\to M\rtimes_{\xi} L $ is always the kernel of $\pi_{\xi}$: it is easy to see that $\pi_{\xi}\comp k_{\xi}=0$, whereas for the universal property some work needs to be done.

All equivalences $\Pt_L(\A)\simeq\Act_L(\A)$ taken together determine an equivalence $\Pt(\A)\simeq\Act(\A)$, where $\Act(\A)$ is the category of \emph{internal actions in~$\A$}; its morphisms are suitably defined equivariant maps.

\begin{lemma}
\label{lemma:regular epimorphisms of L-points}
A morphism in the category $\Pt_L(\A)$ is a regular epimorphism if and only if the morphism between the domains is a regular epimorphism in $\A$.\noproof
\end{lemma}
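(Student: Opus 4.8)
The forgetful functor $U\colon\Pt_L(\A)\to\A$ sending a point $(p,s)$ to the domain of $p$ is faithful and, as is easily checked, it creates limits; in particular kernel pairs of morphisms in $\Pt_L(\A)$ are computed on domains in $\A$. These two facts are the only structural input I would use, the rest following from the regularity of $\A$.

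For the ``if'' direction, suppose $g\colon(p\colon X\to L,s)\to(p'\colon X'\to L,s')$ is such that $Ug=g\colon X\to X'$ is a regular epimorphism in $\A$. Since $\A$ is exact, hence regular, $g$ is the coequaliser in $\A$ of its kernel pair $\KP(g)=(X\times_{X'}X,\pi_1,\pi_2)$, and by the remark above this same pair, with its induced point structure, is the kernel pair of $g$ in $\Pt_L(\A)$. I would then check directly that $g$ is the coequaliser of $(\pi_1,\pi_2)$ in $\Pt_L(\A)$: given any morphism $q\colon(p,s)\to(q_0\colon Z\to L,t_0)$ of $\Pt_L(\A)$ with $q\comp\pi_1=q\comp\pi_2$, the underlying map $Uq$ coequalises $\pi_1,\pi_2$ in $\A$, so it factors uniquely as $Uq=\bar{q}\comp g$ in $\A$. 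One verifies that $\bar q$ commutes with the structural data, namely $q_0\comp\bar q=p'$ and $\bar q\comp s'=t_0$; both identities reduce, via $p'\comp g=p$ and $g\comp s=s'$, to the fact that $g$ is an epimorphism. Thus $\bar q$ is a morphism of $\Pt_L(\A)$ and is the unique factorisation of $q$ through $g$, so $g$ is a regular epimorphism in $\Pt_L(\A)$.

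For the ``only if'' direction, suppose $g$ is a regular epimorphism in $\Pt_L(\A)$. In $\A$ I take the (regular epi, mono)-factorisation $g=m\comp e$ with $e\colon X\to I$ and $m\colon I\to X'$. The plan is to equip $I$ with a point structure over $L$ turning $e$ and $m$ into morphisms of $\Pt_L(\A)$: put $p''\coloneqq p'\comp m\colon I\to L$ and $t\coloneqq e\comp s\colon L\to I$. Using $m\comp e=g$, $p'\comp g=p$ and $g\comp s=s'$ one checks $p''\comp t=1_L$, so $(p'',t)$ is a point, and that $e\colon(p,s)\to(p'',t)$ and $m\colon(p'',t)\to(p',s')$ are morphisms of $\Pt_L(\A)$ with $g=m\comp e$. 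Since $U$ is faithful it reflects monomorphisms, so $m$ is a monomorphism in $\Pt_L(\A)$. Being a regular epimorphism, $g$ is in particular an extremal epimorphism, so the factorisation $g=m\comp e$ through the monomorphism $m$ forces $m$ to be an isomorphism in $\Pt_L(\A)$; applying $U$ shows $m$ is an isomorphism in $\A$, whence $Ug=m\comp e$ is a regular epimorphism in $\A$.

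The ``if'' direction is routine, the only thing to watch being that every required identity collapses to the epimorphy of $g$. The main point, and the only genuinely non-formal step, is the ``only if'' direction: one must observe that the $\A$-image factorisation of $g$ lifts to $\Pt_L(\A)$, which amounts to producing the point structure $(p'',t)$ on $I$, and then invoke the extremality of regular epimorphisms \emph{in} $\Pt_L(\A)$ rather than any regularity of that category, which we do not need to establish here.
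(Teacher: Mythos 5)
Your proof is correct, and since the paper states this lemma without proof (as one of the ``well-known basic results, easily checked by hand''), your argument is exactly the intended routine verification: the ``if'' direction via effectiveness of regular epimorphisms in the regular category $\A$ together with the fact that kernel pairs in $\Pt_L(\A)$ are computed on domains, and the ``only if'' direction by lifting the $(\text{regular epi},\text{mono})$-factorisation to $\Pt_L(\A)$ and using that regular epimorphisms are extremal in any category. One trivial remark: the identity $\bar{q}\comp s'=t_0$ follows directly from $g\comp s=s'$ without invoking the epimorphy of $g$ (only $q_0\comp\bar{q}=p'$ needs it), so your claim that both identities ``reduce to the epimorphy of $g$'' slightly overstates what is used, but this does not affect the validity of the proof.
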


\begin{lemma}
\label{lemma:pushouts and pullbacks of L-points}
A square in the category $\Pt_L(\A)$ is a pushout (a pullback) if and only if the square between the domains is a pushout (a pullback) in $\A$. This means that pushouts and pullbacks can be computed in the base category using just the domains: the additional structure is canonically induced.\noproof
\end{lemma}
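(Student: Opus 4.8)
The statement asserts precisely that the domain functor $U\colon \Pt_L(\A)\to\A$, sending a point $(p\colon X\to L,s)$ to $X$, both preserves and reflects pullbacks and pushouts. My plan is to prove this by showing that $U$ \emph{creates} these connected (co)limits: given a cospan (respectively a span) in $\Pt_L(\A)$, I form the pullback (respectively the pushout) $W$ of the underlying domains in $\A$---which exists since $\A$ is finitely complete and finitely cocomplete---equip $W$ with a canonical point structure over $L$, and check that the resulting object is the pullback (respectively pushout) in $\Pt_L(\A)$. Preservation and reflection then both follow from the essential uniqueness of (co)limits, together with the observation that $U$ reflects isomorphisms: the inverse of a point morphism whose underlying arrow is invertible is automatically again a point morphism.

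For the pullback of a cospan $(X_1)\to(X_0)\leftarrow(X_2)$, write $W=X_1\times_{X_0}X_2$ in $\A$. Since the two legs are morphisms over $L$, the composites $W\to X_1\to L$ and $W\to X_2\to L$ agree, yielding a projection $p_W\colon W\to L$; and since the legs commute with sections, the two sections $L\to X_1$ and $L\to X_2$ become equal after composing into $X_0$, so the universal property of the pullback produces a section $s_W\colon L\to W$ with $p_W\comp s_W=1_L$. A routine diagram chase shows that any competing cone in $\Pt_L(\A)$ factors uniquely through $(W,p_W,s_W)$ in a way compatible with projections and sections, so this is the pullback in $\Pt_L(\A)$. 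This half is essentially forced and presents no difficulty.

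The pushout case is the more delicate one and is where I expect the only real bookkeeping. For a span $(X_1)\leftarrow(X_0)\to(X_2)$, set $W=X_1+_{X_0}X_2$ in $\A$; the maps $X_1\to L$ and $X_2\to L$ induce $p_W\colon W\to L$, as before. The subtle point is the section: one must check that the two candidate arrows $L\to X_1\to W$ and $L\to X_2\to W$ coincide. They do, because both factor through the common composite $L\to X_0\to X_i\to W$ (sections commute with the span, and the two coprojections agree on the image of $X_0$), and one then verifies $p_W\comp s_W=1_L$; thus $p_W$ is a genuine split epimorphism over $L$. The universal property in $\Pt_L(\A)$ is checked as in the pullback case, and reflection again follows from uniqueness of the pushout in $\A$, whose existence is guaranteed by semi-abelianness. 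Conceptually, all of this is an instance of the standard fact that $\Pt_L(\A)=1_L/(\A/L)$ is a coslice of a slice, and that the forgetful functors of slices and coslices create connected limits and connected colimits; the hands-on verification above simply unwinds this principle for pullbacks and pushouts.
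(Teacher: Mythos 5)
Your proof is correct. The paper itself offers no argument here --- the lemma carries a \noproof\ tag and is flagged as ``easily checked by hand'' --- so there is no official proof to compare against; your write-up is a legitimate completion of exactly the routine verification the authors have in mind. The organisation via creation of (co)limits by the domain functor $U\colon\Pt_L(\A)\to\A$ is the standard one, and you correctly isolated the single point where something could go wrong, namely that the two candidate sections into the pushout agree: in symbols, $q_1\comp s_1=q_1\comp g_1\comp s_0=q_2\comp g_2\comp s_0=q_2\comp s_2$, using that the legs of the span are point morphisms and that the pushout square commutes. Two small remarks for polish: in the pushout case the compatibility $p_Y\comp u=p_W$ of the induced map with the projections follows from the uniqueness clause of the pushout's universal property (no joint-epimorphy of the coprojections is needed), so the verification really is symmetric to the pullback case; and your closing observation that $\Pt_L(\A)=1_L/(\A/L)$ is accurate --- the slice forgetful creates connected limits and all colimits, the coslice forgetful creates connected colimits and all limits, and since pullbacks and pushouts are (co)limits over connected diagrams, the composite creates both, which is precisely the abstract reason the hands-on computation succeeds.
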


Since the point $(1_L,1_L)$ is zero in $\Pt_L(\A)$, as a consequence we have a simple way to compute kernels in that category.

\begin{corollary}
\label{cor:kernel of L-point morphism}
Consider a morphism of $L$-points as in Figure~\ref{FigKernelPoints} on the left. 
\begin{figure}
\[
\vcenter{\xymatrix{
A \ar[d]_-{f} \ar@<.5ex>[r]^-{p_A} & L \ar@{=}[d] \ar@<.5ex>[l]^-{s_A}\\
B \ar@<.5ex>[r]^-{p_B} & L \ar@<.5ex>[l]^-{s_B}
}}
\qquad\qquad
\vcenter{\xymatrix@!0@=4em{L\times_B A \dottedpullback \ar@<.5ex>@{.>}[rd] \ar@{.>}[rr] \ar@{.>}[dd] && A \ar@<.5ex>[ld]^-{p_A} \ar[dd]^f\\
& L \ar@<.5ex>[rd]^-{s_B} \ar@<.5ex>[ru]^-{s_A} \ar@<.5ex>[ld]^-{1_L} \ar@<.5ex>@{.>}[lu] \\
L \ar[rr]_{s_B} \ar@<.5ex>[ru]^-{1_L} && B \ar@<.5ex>[lu]^-{p_B}}}
\]
\caption{Kernels in $\Pt_L(\A)$.}\label{FigKernelPoints}
\end{figure}
Then its kernel is the $L$-point induced by the outer pullback on the right.\noproof
\end{corollary}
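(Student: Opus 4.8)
The plan is to rely on the general categorical fact that, in any pointed category, the kernel of a morphism is obtained as the pullback of that morphism along the unique arrow out of the zero object, and then to transport this pullback to the base category $\A$ by means of Lemma~\ref{lemma:pushouts and pullbacks of L-points}.

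First I would recall the elementary fact that, in a pointed category with zero object $0$, the kernel of a morphism $f\colon A\to B$ coincides with the pullback of $f$ along the unique morphism $0\to B$: indeed a map $g$ with codomain $A$ satisfies $f\comp g=0$ precisely when $f\comp g$ factors through $0$, which is exactly the condition for $g$ to factor through that pullback. As we have already observed that the point $(1_L,1_L)$ is the zero object of $\Pt_L(\A)$, the kernel of $f$ in $\Pt_L(\A)$ is thus the pullback of $f$ along the unique $L$-point morphism $(L,1_L,1_L)\to(B,p_B,s_B)$.

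Next I would identify this unique arrow. A morphism of $L$-points $g\colon(L,1_L,1_L)\to(B,p_B,s_B)$ must satisfy $g\comp 1_L=s_B$, so necessarily $g=s_B$ (and the remaining condition $p_B\comp s_B=1_L$ is just the splitting identity for $B$). Hence the kernel we seek is the pullback of $f$ along $s_B$, taken in $\Pt_L(\A)$. By Lemma~\ref{lemma:pushouts and pullbacks of L-points} this pullback may be computed on underlying objects, so its domain is the pullback $L\times_B A$ of $f$ along $s_B$ in $\A$, carrying its canonically induced $L$-point structure. This is precisely the outer pullback on the right of Figure~\ref{FigKernelPoints}: a short check of the universal properties shows that the induced projection equals $p_A\comp\pi_A=\pi_L$ and that the induced splitting is the unique arrow $L\to L\times_B A$ determined by $s_A$ and $1_L$, which is well defined exactly because $f\comp s_A=s_B$, i.e.\ because $f$ is a morphism of $L$-points.

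I do not expect a genuine obstacle here: the entire content is carried by Lemma~\ref{lemma:pushouts and pullbacks of L-points}, and the only verifications needed are that the arrow out of the zero object is $s_B$ and that the canonically induced point structure agrees with the one drawn, both of which follow immediately from the relevant universal properties.
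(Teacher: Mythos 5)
Your proof is correct and takes essentially the same route as the paper, which states the corollary without proof precisely because it follows from the observation that $(1_L,1_L)$ is the zero object of $\Pt_L(\A)$ combined with Lemma~\ref{lemma:pushouts and pullbacks of L-points}. Your write-up merely makes explicit the details the paper leaves to the reader: that the unique morphism out of the zero point is $s_B$, and that the induced point structure on $L\times_B A$ is the one drawn in Figure~\ref{FigKernelPoints}.
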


\subsection{The join decomposition formula}
In what follows, we need to be able to decompose a commutator of a join of subobjects into a join of commutators. This goes by means of a join decomposition formula which involves a ternary version of the Higgins commutator:

\begin{definition}[\cite{CJ03,HVdL11,HL13}]
Given objects $A$, $B$ and $C$ in $\A$, consider the morphism
\[
\Sigma_{A,B,C}=
\begin{pmatrix}
i_A & i_A & 0 \\
i_B & 0 & i_B \\
0 & i_C & i_C
\end{pmatrix}
\colon A+B+C \longrightarrow (A+B)\times(A+C)\times(B+C)
\]
and its kernel $h_{A,B,C}\colon A\diamond B\diamond C \to A+B+C$. The object $A\diamond B\diamond C$ is called the \emph{cosmash product} of $A$, $B$ and $C$. 

Given three subobjects $(K,k)$, $(M,m)$ and $(N,n)$ of an object $X$, we define their \emph{Higgins commutator} as the subobject of $X$ given by the image factorisation 
\[
\xymatrix{
K\diamond M\diamond N \ar@{.>}[d] \ar[r]^-{h_{K,M,N}} & K+M+N \ar[d]^-{\Bigl\lgroup\begin{smallmatrix}
	k \\ m\\ n
\end{smallmatrix}\Bigr\rgroup}\\
[K,M,N] \ar@{.>}[r] & X.
}
\]
We call $[K,M,N]$ the \emph{ternary Higgins commutator} of $K$, $M$ and $N$ in $X$.
\end{definition}

\begin{proposition}[\cite{HVdL11, HL13}]\label{Higgins properties}
Suppose $K_1$, $K_2$, $K_3\leq X$. Then we have the following (in)equalities of subobjects of $X$:
\begin{enumerate}
\item[(0)] if $K_1=0$ then $[K_1,K_2]=0=[K_1,K_2,K_3]$;
\item $[K_1,K_2]=[K_2,K_1]$ and for $\sigma\in S_3$, $[K_1,K_2,K_3]=[K_{\sigma(1)},K_{\sigma(2)},K_{\sigma(3)}]$;
\item for any regular epimorphism $f\colon {X\to Y}$, $f[K_1,K_2 ]=[f(K_1),f(K_2)]\leq Y$ and $f[K_1,K_2,K_3 ]=[f(K_1),f(K_2),f(K_3)]\leq Y$;
\item $[L_1,K_2]\leq [K_1,K_2]$ and $[L_1,K_2,K_3]\leq [K_1,K_2,K_3]$ when $L_1\leq K_1$;
\item $[[K_1,K_{2}],K_{3}]\leq [K_1,K_2,K_{3}]$;
\item $[K_1,K_1,K_2]\leq [K_1,K_2]$;
\item $[K_1,K_{2}\vee K_3]=[K_1,K_2]\vee [K_1,K_3]\vee [K_1,K_{2},K_3]$.
\end{enumerate}
\end{proposition}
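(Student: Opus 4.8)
The plan is to derive every item from a single structural observation: by the two definitions of cosmash product recalled above, each commutator appearing in the statement is the image in $X$ of a canonical morphism out of a cosmash product. Writing $c_{K_1,K_2}\colon K_1\cosmash K_2\to X$ for the composite $\binom{k_1}{k_2}\comp\iota_{K_1,K_2}$ and $c_{K_1,K_2,K_3}\colon K_1\cosmash K_2\cosmash K_3\to X$ for its ternary analogue, we have $[K_1,K_2]=\mathrm{im}(c_{K_1,K_2})$ and $[K_1,K_2,K_3]=\mathrm{im}(c_{K_1,K_2,K_3})$, so that all the assertions reduce to functoriality of the cosmash construction together with the usual behaviour of regular image factorisations in a semi-abelian category. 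The formal items come first. For (0), if $K_1=0$ then $r_{0,K_2}$ and $\Sigma_{0,K_2,K_3}$ are isomorphisms, so their kernels, and hence both commutators, vanish. For (1), the symmetry constraints of the coproduct and the product commute with $r_{K_1,K_2}$ and with $\Sigma_{K_1,K_2,K_3}$, so they restrict to isomorphisms of the cosmash products over the maps into $X$, and passing to images gives the equalities (the $S_3$-action being generated by transpositions). For (3), the inclusion $L_1\leq K_1$ induces by functoriality a morphism $L_1\cosmash K_2\to K_1\cosmash K_2$ commuting with the maps into $X$, and since the image of a composite factors through the image of its last arrow we get $[L_1,K_2]\leq[K_1,K_2]$; the ternary case is identical.

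Item (2) is where the first genuine categorical ingredient enters, namely that the cosmash product functor preserves regular epimorphisms, a standard property of semi-abelian categories provable with the aid of Lemma~\ref{lemma:ker of regular pushout is regular epimorphism}. Granting it, for a regular epimorphism $f\colon X\to Y$ the restrictions $K_i\to f(K_i)$ are regular epimorphisms, hence so is the comparison $K_1\cosmash K_2\to f(K_1)\cosmash f(K_2)$, and likewise in the ternary case; as this comparison sits in a triangle over the two maps into $Y$, and a regular epimorphism leaves the image of a composite unchanged, we obtain $f[K_1,K_2]=[f(K_1),f(K_2)]$ and its ternary analogue. Items (4) and (5) then rest on explicit comparison morphisms between the iterated binary and the ternary cosmash products. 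For (5) the fold $\nabla+1_{K_2}\colon K_1+K_1+K_2\to K_1+K_2$ restricts to a morphism $K_1\cosmash K_1\cosmash K_2\to K_1\cosmash K_2$ over $X$, so that $\mathrm{im}(c_{K_1,K_1,K_2})\leq[K_1,K_2]$. For (4) one first uses (2) to rewrite $[[K_1,K_2],K_3]$ as the image of $(K_1\cosmash K_2)\cosmash K_3\to X$, and then a canonical morphism $(K_1\cosmash K_2)\cosmash K_3\to K_1\cosmash K_2\cosmash K_3$ to land inside the ternary commutator.

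The main obstacle is the join decomposition (6), the only genuinely nonformal identity. Here I would begin by writing $K_2\vee K_3$ as the image of $\binom{k_2}{k_3}\colon K_2+K_3\to X$, so that the regular epimorphism $K_2+K_3\to K_2\vee K_3$ induces, via (2), a regular epimorphism $K_1\cosmash(K_2+K_3)\to K_1\cosmash(K_2\vee K_3)$ which does not change the image in $X$; consequently $[K_1,K_2\vee K_3]$ is the image of $K_1\cosmash(K_2+K_3)\to X$. The crux is then a decomposition of the cosmash product of a coproduct: $K_1\cosmash(K_2+K_3)$ is covered, as a subobject of $K_1+K_2+K_3$, by the three cosmash products $K_1\cosmash K_2$, $K_1\cosmash K_3$ and $K_1\cosmash K_2\cosmash K_3$ embedded via the evident inclusions. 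Granting this, the image of a join is the join of the images, and the three images in $X$ are precisely $[K_1,K_2]$, $[K_1,K_3]$ and $[K_1,K_2,K_3]$, which yields the equality. Establishing the covering of $K_1\cosmash(K_2+K_3)$, that is, controlling the cross-effects of the coproduct and checking that no higher terms survive, is the technical heart of the proposition and is where I expect essentially all of the difficulty to concentrate; this is the content imported from~\cite{HVdL11,HL13}.
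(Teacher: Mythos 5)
The paper offers no proof of this proposition---it is imported wholesale from \cite{HVdL11,HL13}---and your proposal is an accurate reconstruction of the strategy used in those sources: realise every commutator as the image in $X$ of the canonical map out of a (binary or ternary) cosmash product, derive (0), (1), (3) from functoriality of the kernel construction, (4) and (5) from the canonical comparison maps $(K_1\cosmash K_2)\cosmash K_3\to K_1\cosmash K_2\cosmash K_3$ and $K_1\cosmash K_1\cosmash K_2\to K_1\cosmash K_2$ (both of which do exist: one checks that the relevant composites into the components of $\Sigma$, resp.\ of $r$, vanish), and (2) from preservation of regular epimorphisms by the cosmash functors. Your treatment of (6) is also the right one: replace $K_2\vee K_3$ by the regular epimorphism $K_2+K_3\to K_2\vee K_3$, use (2) to reduce to $K_1\cosmash(K_2+K_3)$, and invoke the fact that the three canonical subobjects $K_1\cosmash K_2$, $K_1\cosmash K_3$ and $K_1\cosmash K_2\cosmash K_3$ jointly cover it. You correctly identify that covering lemma as the technical heart; deferring it to \cite{HVdL11,HL13} is exactly what the paper itself does, so this is not a divergence, though it does mean your text is a proof outline rather than a self-contained proof.

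Two points deserve correction or caution. First, in (0) the binary comparison $r_{0,K_2}$ is indeed an isomorphism, but the ternary $\Sigma_{0,K_2,K_3}\colon K_2+K_3\to K_2\times K_3\times(K_2+K_3)$ is only a split monomorphism (its third component is the identity); the conclusion that the kernel vanishes is of course unaffected. Second, in (2) your appeal to Lemma~\ref{lemma:ker of regular pushout is regular epimorphism} is the right tool but cannot be applied directly to the naturality square of $r_{K_1,K_2}$: that lemma requires the square to be a \emph{regular pushout}, and for the square with horizontal arrows $r$ and vertical arrows $f_1+f_2$, $f_1\times f_2$ this is not evident---trying to verify pushout-ness head-on essentially begs the question. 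The standard route (as in \cite{MM10,HVdL11}) is to factor through split epimorphisms: first obtain $K_1\flat K_2\to f(K_1)\flat f(K_2)$ as a regular epimorphism from the square of split epimorphisms $(1_{K_i},0)$ with compatible splittings (such squares are automatically regular pushouts), then realise $K_1\cosmash K_2$ as the kernel of the split epimorphism $K_1\flat K_2\to K_2$ split by $i_{K_2}$, and apply the lemma once more. With that repair, and the covering lemma for (6) granted from the references, your outline is sound.
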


As we shall see, item (6) allows us to reduce commutators to simpler ones.

\subsection{Trivial actions}
We now define a suitable Birkhoff subcategory of $\Act_L(\A)$: the subcategory $\TrivAct_L(\A)$ of trivial $L$-actions.

\begin{definition}
Consider an $L$-action expressed as a point with a chosen kernel
\[
\xymatrix{
0 \ar[r] & M \ar[r]^-{k_p} & X \ar@<.5ex>[r]^-{p} & L \ar@<.5ex>[l]^-{s} \ar[r] & 0.
}
\]
We say that it is a \emph{trivial action} when there exists an isomorphism of split short exact sequences
\[
\xymatrixcolsep{4pc}
\xymatrix{
0 \ar[r] & M \ar@{=}[d] \ar[r]^-{k_p} & X \ar[d]^-{\phi}_{\cong} \ar@<.5ex>[r]^-{p} & L \ar@{=}[d] \ar@<.5ex>[l]^-{s} \ar[r] & 0\\
0 \ar[r] & M \ar[r]_-{\langle 1_M,0\rangle} & M\times L \ar@<.5ex>[r]^-{\pi_2} & L \ar@<.5ex>[l]^-{\langle 0,1_L\rangle} \ar[r] & 0.
}
\]
The category $\TrivAct_L(\A)$ of trivial $L$-actions is the full subcategory of $\Act_L(\A)$ whose objects are trivial $L$-actions.
\end{definition}

\begin{construction}\label{ConstructionTrivialAction}
We wish to construct a functor $I\colon \Act_L(\A)\to \TrivAct_L(\A)$, left adjoint to the inclusion functor $H\colon \TrivAct_L(\A)\to \Act_L(\A)$. Given a split extension as in the top row of the diagram
\[
\xymatrixcolsep{4pc}
\xymatrix{
0 \ar[r] & M \ar[d]_-{c_s\circ k_p} \ar[r]^-{k_p} & X \ar[d]|-{\langle c_s,p\rangle} \ar@<.5ex>[r]^-{p} & L \ar@{=}[d] \ar@<.5ex>[l]^-{s} \ar[r] & 0\\
0 \ar[r] & C_s \ar[r]_-{\langle 1_{C_s},0\rangle} & C_s\times L \ar@<.5ex>[r]^-{\pi_2} & L \ar@<.5ex>[l]^-{\langle 0,1_L\rangle} \ar[r] & 0,
}
\]
we take the cokernel $C_s$ of the splitting $s$, which leads to the bottom split extension and the morphism between them. The trivial $L$-action corresponding to the bottom sequence is called the \emph{object of coinvariants} of the given action, and it is the image through $I$ of the action we began with. The morphism of split extensions corresponds to the unit $\eta\colon 1_{\Act_L(\A)}\to HI$ of the adjunction at $(p,s)$. 
\end{construction}

We still need to prove that the thus constructed functor $I$ is a Birkhoff reflector, of course. The following definition follows the pattern of~\cite{EverVdL1, EverVdLRCT}: the kernel of the unit of a Birkhoff reflector is viewed as a commutator, relative to this reflector.

\begin{definition}
\label{defi:coinvariance commutator}
With the notation of the previous construction, in Figure~\ref{FigKernelUnit} we take the kernel of the unit $\eta_{(p,s)}$ as in Corollary~\ref{cor:kernel of L-point morphism}. 
\begin{figure}
\resizebox{.4\textwidth}{!}
{$\xymatrix@!0@=7em{\cl_X(L) \dottedpullback \ar@<.5ex>@{.>}[rd] \ar@{.>}[rr]^-{k_{c_s}} \ar@{.>}[dd]_-{p\circ k_{c_s}} && X \ar@<.5ex>[ld]^-{p} \ar[dd]^-{\langle c_s,p\rangle}\\
& L \ar@<.5ex>[rd]^-{\langle 0,1_L\rangle} \ar@<.5ex>[ru]^-{s} \ar@<.5ex>[ld]^-{1_L} \ar@<.5ex>@{.>}[lu] \\
L \ar[rr]_-{\langle 0,1_L\rangle} \ar@<.5ex>[ru]^-{1_L} && C_s\times L \ar@<.5ex>[lu]^-{\pi_2}}
$}
\caption{Kernel of the unit of the adjunction.}\label{FigKernelUnit}
\end{figure}
It is easily seen that this square is indeed a pullback. Recall that $\cl_X(L)$ is the normal closure of $L\leq X$ in $X$, which may be obtained as the kernel of $c_s$. Taking kernels of the split epimorphisms, we obtain horizontal short exact sequences as in
\[
\xymatrixcolsep{4pc}
\xymatrix{
0 \ar[r] & \llbracket L,M\rrbracket \pullback \ar[d]_-{k_{(c_s\circ k_p)}} \ar[r] & \cl_X(L) \ar[d]^-{k_{c_s}} \ar@<.5ex>[r] & L \ar@{=}[d] \ar@<.5ex>[l] \ar[r] & 0\\
0 \ar[r] & M \pullback \ar[d]_-{c_s\circ k_p} \ar[r]^-{k_p} & X \ar[d]|-{\langle c_s,p\rangle} \ar@<.5ex>[r]^-{p} & L \ar@{=}[d] \ar@<.5ex>[l]^-{s} \ar[r] & 0\\
0 \ar[r] & C_s \ar[r]_-{\langle 1_{C_s},0\rangle} & C_s\times L \ar@<.5ex>[r]^-{\pi_2} & L \ar@<.5ex>[l]^-{\langle 0,1_L\rangle} \ar[r] & 0
}
\]
and we define the \emph{coinvariants commutator} $\llbracket L,M\rrbracket$ as the top left kernel. 
\end{definition}

\begin{remark}
\label{rmk:eta is regular epimorphism}
By construction we have the diagram
\[
\xymatrixcolsep{4pc}
\xymatrix{
0 \ar[r] & M \ar[d]_-{c_s\circ k_p} \ar[r]^-{k_p} & X \ar[d]|-{\langle c_s,p\rangle} \ar@<.5ex>[r]^-{p} & L \ar@{=}[d] \ar@<.5ex>[l]^-{s} \ar[r] & 0\\
0 \ar[r] & C_s \ar[r]_-{\langle 1_{C_s},0\rangle} \ar@{=}[d] & C_s\times L \ar@<.5ex>[r]^-{\pi_2} \ar[d]_-{\pi_1} & L \ar@<.5ex>[l]^-{\langle 0,1_L\rangle} \ar[r] \ar[d] & 0\\
0 \ar[r] & C_s \ar@{=}[r] & C_s \ar@<.5ex>[r] & 0 \ar@<.5ex>[l] \ar[r] & 0
}
\]
where the vertical composite rectangle
\[
\xymatrix{
X \ar[r]^{p} \ar[d]_{c_s} & L \ar[d]^{0}\\
C_s \ar[r]_{0} & \pushout 0 
}
\]
is a pushout of regular epimorphisms, hence a regular pushout. Indeed the universal property can be shown directly by using the fact that $p\comp s=1_L$ and that $c_s$ is the cokernel of $s$. Since $\langle c_s,p\rangle$ is the comparison morphism to the induced pullback, it is automatically a regular epimorphism. By Lemma~\ref{lemma:regular epimorphisms of L-points}, this is equivalent to~$\eta_{(p,s)}$ being a regular epimorphism of points over $L$. Furthermore, since the top left square is a pullback, also $c_s\comp k_p$ is a regular epimorphism.
\end{remark}

\begin{remark}
Since kernels commute with kernels, we can obtain $\llbracket L,M\rrbracket$ as the kernel of $c_s\comp k_p$, computed in $\A$. Since the lower left square in the diagram of Definition~\ref{defi:coinvariance commutator} is a pullback, the composite $k_p\comp k_{(c_s\circ k_p)}$ is the kernel of $\langle c_s,p\rangle$, so that $\llbracket L,M\rrbracket\lhd X$. On the other hand, since the upper left square is a pullback as well, we have that $\llbracket L,M\rrbracket= M\wedge \cl_X(L)$. 

An alternative argument goes as follows. $M$ is the kernel of $p$, while the kernel of~$c_s$ is precisely the normal closure of $L$ in $X$; the kernel of ${\langle c_s,p\rangle}$ is the intersection of those two kernels. 
\end{remark}

By the discussion in~\ref{higgins commutator and central extensions}, we know that $\cl_X(L)=L\vee [L,X]$ in $X$. On the other hand, the top split extension in the diagram of Definition~\ref{defi:coinvariance commutator} tells us that $\cl_X(L)=L\vee \llbracket L,M\rrbracket$. The following simplifies this, by relating the two types of commutator.

\begin{proposition}
\label{prop:higgins=coinvariance}
Given a split extension over $L$
\begin{equation}
\label{diag:a point}
\xymatrix{
0 \ar[r] & M \ar[r]^-{k} & X \ar@<.5ex>[r]^-{p} & L \ar@<.5ex>[l]^-{s} \ar[r] & 0,
}
\end{equation}
its coinvariance commutator $\llbracket L,M\rrbracket$, seen as a subobject of $X$, coincides with the Higgins commutator $[L,M]$ of $L$ and $M$ in $X$. In particular, $C_s\cong {M}/{\left[L,M\right]}$.
\end{proposition}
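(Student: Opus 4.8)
The plan is to prove the two subobjects coincide by establishing both inclusions, the nontrivial one resting on the fact that $[L,M]$ is already normal in $X$. Throughout I identify $M$ with the image of $k$ and $L$ with the image of $s$, both viewed as subobjects of $X$; since $k$ and $s$ are jointly extremal-epimorphic we have $X=M\vee L$. Because $M=\ker p\lhd X$, the characterisation of normal subobjects via commutators recalled in~\ref{higgins commutator and central extensions} gives $[M,X]\leq M$, whence $[L,M]\leq[X,M]\leq M$ by monotonicity (Proposition~\ref{Higgins properties}(3)). Together with $[L,M]\leq[L,X]\leq\cl_X(L)$ and the identity $\llbracket L,M\rrbracket=M\wedge\cl_X(L)$ from Definition~\ref{defi:coinvariance commutator}, this already yields the easy inclusion $[L,M]\leq M\wedge\cl_X(L)=\llbracket L,M\rrbracket$.

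First I would pin down the normal closure. Starting from $\cl_X(L)=L\vee[L,X]$ and expanding $[L,X]=[L,M\vee L]$ by the join decomposition formula (Proposition~\ref{Higgins properties}(6)),
\[
[L,M\vee L]=[L,M]\vee[L,L]\vee[L,M,L].
\]
Here $[L,L]\leq L$, since it is the image of a composite $L\cosmash L\to L+L\to X$ whose second map factors through $s$ and hence through $\mathrm{im}(s)=L$, while $[L,M,L]=[L,L,M]\leq[L,M]$ by items (1) and (5) of Proposition~\ref{Higgins properties}. Absorbing $[L,L]$ into $L$, this gives $\cl_X(L)=L\vee[L,M]$ (consistent with the two expressions for $\cl_X(L)$ already recorded).

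The heart of the argument — and the step I expect to be the main obstacle — is to show that $[L,M]\lhd X$, for which the above commutator calculus is reused in a more delicate way. By the normality criterion it suffices to check $[[L,M],X]\leq[L,M]$, and applying the join decomposition once more to $X=M\vee L$ gives
\[
[[L,M],X]=[[L,M],M]\vee[[L,M],L]\vee[[L,M],M,L].
\]
Each summand is controlled by Proposition~\ref{Higgins properties}: the first two by item (4) followed by (1) and (5) (so that $[[L,M],M]\leq[L,M,M]\leq[L,M]$ and $[[L,M],L]\leq[L,M,L]\leq[L,M]$), and the third by monotonicity (3) together with (5), since $[L,M]\leq M$ forces $[[L,M],M,L]\leq[M,M,L]\leq[L,M]$. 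Hence $[[L,M],X]\leq[L,M]$ and $[L,M]$ is normal. I expect the bookkeeping of which commutator identity applies to each term to be the only genuinely tricky part.

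With normality in hand the reverse inclusion follows by a quotient argument. Form the regular epimorphism $q\colon X\to\bar X=X/[L,M]$ and write $\bar M=q(M)$, $\bar L=q(L)$. As $[L,M]\leq M=\ker p$, the map $p$ descends to a split epimorphism $\bar p\colon\bar X\to L$ with kernel $\bar M$ and section $q\comp s$, so $0\to\bar M\to\bar X\to L\to 0$ is split exact and therefore $\bar M\wedge\bar L=0$. Now $\llbracket L,M\rrbracket\leq M$ gives $q(\llbracket L,M\rrbracket)\leq\bar M$, while $\llbracket L,M\rrbracket\leq\cl_X(L)=L\vee[L,M]$ gives $q(\llbracket L,M\rrbracket)\leq q(L)\vee q([L,M])=\bar L$; hence $q(\llbracket L,M\rrbracket)\leq\bar M\wedge\bar L=0$, i.e.\ $\llbracket L,M\rrbracket\leq\ker q=[L,M]$. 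Combined with the easy inclusion this proves $\llbracket L,M\rrbracket=[L,M]$. Finally, the ``in particular'' statement is immediate: by Remark~\ref{rmk:eta is regular epimorphism} the morphism $c_s\comp k$ is a regular epimorphism with kernel $\llbracket L,M\rrbracket$, so $C_s\cong M/\llbracket L,M\rrbracket=M/[L,M]$.
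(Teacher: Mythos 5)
Your argument is correct, and it reaches the conclusion by a genuinely different mechanism than the paper. The paper regards both $\cl_X(L)\rightleftarrows L$ and $L\vee[L,M]\rightleftarrows L$ as normal subobjects of the point $X\rightleftarrows L$---normality of $[L,M]$ coming for free, as the image of the normal monomorphism $\kappa_{L,M}\comp i_{L,M}$ under the regular epimorphism $\binom{s}{k}$---and then shows that the kernel of each vanishes precisely when \eqref{diag:a point} is a trivial action; since the two normal subobjects thus express the same universal property, they must coincide. The two vanishing implications there are essentially the commutator computations you also use (triviality gives $[L,M]\leq[\cl_X(L),M]=0$ because $M$ and $\cl_X(L)$ are normal with zero intersection; conversely $[L,M]=0$ forces $[L,X]\leq L$ via the join decomposition). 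You replace the ``same universal property'' step by a direct two-inclusion calculation: you extract the explicit formula $\cl_X(L)=L\vee[L,M]$ from Proposition~\ref{Higgins properties}(6), prove $[L,M]\lhd X$ by hand through the criterion $[[L,M],X]\leq[L,M]$ of~\cite{MM10} (your bookkeeping with items (1), (3), (4) and (5) checks out), and then pass to the quotient $X/[L,M]$, where the images of $M$ and of $\cl_X(L)=L\vee[L,M]$ become the kernel and the section-image of a split extension and hence intersect trivially, giving $\llbracket L,M\rrbracket=M\wedge\cl_X(L)\leq[L,M]$. What each approach buys: the paper's proof is shortest exactly where yours is longest---normality of $[L,M]$ is immediate from stability of normal monomorphisms under regular images, and no quotient need be formed---and it is conceptually sharper, exhibiting both commutators as the same obstruction to triviality of the action; your proof is more elementary and self-contained, avoiding the slightly subtle uniqueness argument for normal subobjects expressing the same universal property, and it records the useful identity $\cl_X(L)=L\vee[L,M]$ along the way. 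Both arguments work in any semi-abelian category, without \SH. (One pedantic point: the identity $\llbracket L,M\rrbracket=M\wedge\cl_X(L)$ that launches your easy inclusion is established in the remark following Definition~\ref{defi:coinvariance commutator}, not in the definition itself.)
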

\begin{proof}
Consider the morphism of split extensions 
\[
\xymatrix{
0 \ar[r] & L\diamond M \ar@{.>}[d] \ar[r]^-{i_{L,M}} & L\flat M \ar[d]|-{\binom{s}{k}\circ \kappa_{L,M}} \ar@<.5ex>[r] & L \ar@{=}[d] \ar@<.5ex>[l] \ar[r] & 0\\
0 \ar[r] & M \ar[r]_-{k} & X \ar@<.5ex>[r]^-{p} & L \ar@<.5ex>[l]^-{s} \ar[r] & 0.
}
\]
Its image is the point $L\vee [L,M]\rightleftarrows L$, whose kernel is $[L,M]$. 

Both $\cl_X(L)\rightleftarrows L$ and $L\vee [L,M]\rightleftarrows L$ are normal subobjects of $X\rightleftarrows L$. (For the latter, this follows because $\binom{s}{k}$ is a regular epimorphism and $\kappa_{L,M} \comp i_{L,M}$ is a normal monomorphism in $\A$, so that the image $[L,M]$ of their composite is normal in $X$.) Hence if we show that one vanishes if and only if the other does---so that they express the same universal property, namely the condition that \eqref{diag:a point} represents a trivial action---then they coincide. For the point $\cl_X(L)\rightleftarrows L$ we already know that its kernel is $\llbracket L,M\rrbracket$, which is zero if and only if \eqref{diag:a point} is trivial.

First suppose that \eqref{diag:a point} represents a trivial action, so that $\llbracket L,M\rrbracket=0$. Then $M$ and $\cl_X(L)$ are two normal subobjects of $X$ with a zero intersection, which implies that $[\cl_X(L),M]\leq \cl_X(L)\wedge M$ is trivial. Hence $[L,M]\leq [\cl_X(L),M]=0$.

Conversely, if $[L,M]=0$, then by Proposition~\ref{Higgins properties},
\[
[L,X]=[L,M]\vee [L,L]\vee [L,L,M]\leq [L,M]\vee [L,L]=[L,L]\leq L
\]
so that $\cl_X(L)=L$ and $\llbracket L,M\rrbracket$ vanishes. 
\end{proof}

\begin{proposition}
\label{prop:birkhoff subcategory of actions}
$\TrivAct_L(\A)$ is a Birkhoff subcategory of $\Act_L(\A)$.
\end{proposition}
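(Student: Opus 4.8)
The plan is to verify the three defining conditions of Definition~\ref{defi:birkhoff subcategory}, using throughout the equivalence $\Act_L(\A)\simeq\Pt_L(\A)$ together with the fact that, for semi-abelian $\A$, the category $\Pt_L(\A)$ is again semi-abelian (in particular exact Mal'tsev), so that the ambient framework applies. Fullness of $\TrivAct_L(\A)$ holds by definition. For reflectivity, I would check that the functor $I$ and the natural transformation $\eta$ produced in Construction~\ref{ConstructionTrivialAction} genuinely form an adjunction: given a morphism of points $g\colon (p,s)\to (q,t)$ whose codomain is a trivial action on some $N\times L$, the compatibility of $g$ with the splittings forces its component into $N$ to vanish on $s$, hence to factor through the cokernel $C_s$; since $c_s$ is an epimorphism this factorisation is unique, which is precisely the universal property of the unit $\eta_{(p,s)}$.

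Rather than checking conditions (2) and (3) by hand, I would invoke Lemma~\ref{lemma:equivalent conditions to Birkhoff}: it suffices to prove that for every regular epimorphism $f\colon (p_A,s_A)\to (p_B,s_B)$ the naturality square of $\eta$ at $f$ is a pushout of regular epimorphisms. By Lemma~\ref{lemma:regular epimorphisms of L-points} and Lemma~\ref{lemma:pushouts and pullbacks of L-points}, both the ``regular epimorphism'' and the ``pushout'' conditions may be tested on the underlying square in $\A$, namely
\[
\xymatrix{
A \ar[r]^-{\langle c_{s_A},p_A\rangle} \ar[d]_-{f} & C_{s_A}\times L \ar[d]^-{\bar f\times 1_L}\\
B \ar[r]_-{\langle c_{s_B},p_B\rangle} & C_{s_B}\times L,
}
\]
where $\bar f\colon C_{s_A}\to C_{s_B}$ is the morphism induced on cokernels. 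Here $f$ is a regular epimorphism by hypothesis, the two horizontal maps are regular epimorphisms by Remark~\ref{rmk:eta is regular epimorphism}, and $\bar f\times 1_L$ is a regular epimorphism because cokernels preserve regular epimorphisms and finite products of regular epimorphisms are regular epimorphisms in a regular category.

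It then remains to see that this square of regular epimorphisms is a pushout. Since $\A$ is exact Mal'tsev, it is enough to show that it is a \emph{regular} pushout, which by Lemma~\ref{lemma:ker of regular pushout is regular epimorphism} amounts to the morphism induced between the kernels of the two horizontal maps being a regular epimorphism. By Definition~\ref{defi:coinvariance commutator} these kernels are $\cl_A(L)$ and $\cl_B(L)$, so the task reduces to proving that the restriction $\cl_A(L)\to\cl_B(L)$ of $f$ is a regular epimorphism, equivalently that $f(\cl_A(L))=\cl_B(L)$. This is where the commutator calculus does the work: since $f$ is a regular epimorphism of points it fixes $L$ and sends $A$ onto $B$, so, using the description $\cl_X(L)=L\vee[L,X]$ of the normal closure together with the preservation of joins and of Higgins commutators under regular epimorphisms (Proposition~\ref{Higgins properties}(2)), I obtain
\[
f(\cl_A(L))=f(L)\vee[f(L),f(A)]=L\vee[L,B]=\cl_B(L).
\]

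The only genuinely non-formal step is this last identity, and it is the point I would expect to be the main obstacle: everything hinges on the good behaviour of the normal closure under direct images, so the join decomposition formula and the functoriality of the Higgins commutator from Proposition~\ref{Higgins properties} are the essential ingredients. Once $f(\cl_A(L))=\cl_B(L)$ is in hand, the induced map on kernels is a regular epimorphism, the naturality square is a regular pushout and hence a pushout of regular epimorphisms, and Lemma~\ref{lemma:equivalent conditions to Birkhoff} yields that $\TrivAct_L(\A)$ is a Birkhoff subcategory.
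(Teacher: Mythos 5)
Your overall strategy---establish reflectivity by hand, then verify the Janelidze--Kelly criterion of Lemma~\ref{lemma:equivalent conditions to Birkhoff} by showing each naturality square is a regular pushout via Lemma~\ref{lemma:ker of regular pushout is regular epimorphism}---is viable, and it is genuinely different from the paper's proof, which instead invokes Corollary~5.7 of \cite{EverVdL1} (reducing Birkhoff-ness of $I$ to the normal subfunctor $V\colon (p,s)\mapsto (\cl_X(L)\rightleftarrows L)$ preserving regular epimorphisms) and then quotes Lemma~5.11 of \cite{MM10} for the fact that $[1_L,f]\colon [L,M]\to [L,M']$ is a regular epimorphism. However, there is one concrete error in your execution. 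You apply Lemma~\ref{lemma:ker of regular pushout is regular epimorphism} to the underlying square in $\A$ and assert that the kernels of the horizontal arrows are $\cl_A(L)$ and $\cl_B(L)$. That is not so: the kernel in $\A$ of $\langle c_s,p\rangle\colon X\to C_s\times L$ is the intersection $\ker(p)\wedge\ker(c_s)=M\wedge \cl_X(L)=\llbracket L,M\rrbracket=[L,M]$, as the paper itself notes right after Definition~\ref{defi:coinvariance commutator} and in Proposition~\ref{prop:higgins=coinvariance}. The point with domain $\cl_X(L)$ is the kernel of the unit computed in $\Pt_L(\A)$ (Figure~\ref{FigKernelUnit}), not the kernel in $\A$ of the horizontal map. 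Consequently the identity you prove, $f(\cl_A(L))=\cl_B(L)$, while true, is not what the lemma, applied in $\A$ to your chosen square, requires.

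The gap is easily repaired with exactly the tools you already cite, in either of two ways. (i)~Run the argument in $\Pt_L(\A)$ instead of $\A$: you correctly observed that this category is semi-abelian, so Lemma~\ref{lemma:ker of regular pushout is regular epimorphism} applies there; the kernels of the two units are then precisely the points with domains $\cl_A(L)$ and $\cl_B(L)$, a morphism of points is a regular epimorphism if and only if its domain component is one (Lemma~\ref{lemma:regular epimorphisms of L-points}), and your computation $f(\cl_A(L))=f(L)\vee f[L,A]=L\vee [L,B]=\cl_B(L)$ (direct images preserve joins, and Proposition~\ref{Higgins properties}(2) handles the commutator) finishes the proof. (ii)~Alternatively, keep the square in $\A$ but show instead that $f([L,M_A])=[L,M_B]$, where $M_A=\ker p_A$ and $M_B=\ker p_B$: by Proposition~\ref{Higgins properties}(2) this reduces to $f(M_A)=M_B$, which is itself an instance of Lemma~\ref{lemma:ker of regular pushout is regular epimorphism} applied to the square with rows $p_A$, $p_B$ and verticals $f$, $1_L$ (a regular pushout, since the comparison to the pullback is $f$). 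Note that route (ii) recovers exactly the condition the paper verifies via \cite{MM10}. The remaining steps of your proposal are sound: the reflectivity argument works (uniqueness of the factorisation follows most quickly from $\eta$ being a regular epimorphism, by Remark~\ref{rmk:eta is regular epimorphism}), $\overline{f}\times 1_L$ is indeed a regular epimorphism, and in a regular category a regular pushout is in particular a pushout.
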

\begin{proof}
By its construction and by Remark~\ref{rmk:eta is regular epimorphism}, the regular epimorphic natural transformation $\eta\colon 1_{\Act_L(\A)}\to HI$ is the cokernel of a normal subfunctor $V$ of $1_{\Act_L(\A)}$ which sends a split extension as in~\eqref{diag:a point} to the split extension determined by the point $\cl_X(L)\rightleftarrows L$. By Corollary~5.7 in~\cite{EverVdL1}, this shows that $I$ is a Birkhoff reflector if and only if the functor $V$ preserves regular epimorphisms. Hence it suffices to prove that each regular epimorphism of points over $L$
 \[
 \xymatrix{
 M \ar@<.5ex>[r]^-{p} \ar[d]_-{f} & L \ar@{=}[d] \ar@<.5ex>[l]^-{s}\\
 M' \ar@<.5ex>[r]^-{p'} & L \ar@<.5ex>[l]^-{s'}
 }
 \]
is sent to a regular epimorphism of points by the functor $V$. By Proposition~\ref{prop:higgins=coinvariance}, this holds because the induced morphism $[1_L,f]\colon[L,M]\to [L,M']$ is again a regular epimorphism; this follows from Lemma~5.11 in~\cite{MM10}.
\end{proof}

\begin{remark}
\label{rmk:perfect L-actions}
According to Definition~\ref{defi:perfect object in pointed setting} we have that an $L$-action corresponding to~\ref{rmk:eta is regular epimorphism} is $\TrivAct_L(\A)$-perfect if and only if its image through $I$ is the zero $L$-action $(0\colon {0\to L}, \tau^L_0)$ which corresponds to the split extension
\[
\xymatrix{
0 \ar[r] & 0 \ar[r] & L \ar@<.5ex>[r]^-{1_L} & L \ar@<.5ex>[l]^-{1_L} \ar[r] & 0.
}
\]
This, in turn, is equivalent to the equality of subobjects $\llbracket L,M\rrbracket=[L,M]=M$. Hence an $L$-action on an object $M$ is perfect if and only if $M\leq \cl_X(L)$, which is equivalent to saying that the normal closure $\cl_X(L)$ of $L$ in $X$ is all of $X$.
\end{remark}

\section{Internal crossed modules}\label{Section Internal crossed modules}

We now focus on internal crossed modules in semi-abelian categories. Internal crossed modules are equivalent to internal categories; the conditions that make this happen were obtained in~\cite{Jan03}. In order to have a description which is as simple as possible, we require that $\A$ satisfies an additional condition, called the \emph{Smith is Huq condition} \SH. A semi-abelian category satisfies it when the Smith/Pedicchio commutator~\cite{Ped95b} of two internal equivalence relations vanishes if and only if so does the Huq commutator of their associated normal subobjects~\cite{BB04,MFVdL12}. As explained in~\cite{HVdL11}, in terms of Higgins commutators, this amounts to the condition that whenever $M$, $N\lhd L$ are normal subobjects, $[M,N]=0$ implies $[M,N,L]=0$. 

Examples of semi-abelian categories that satisfy \SH\ include the categories of groups, (commutative) rings (not necessarily unitary), Lie algebras over a commutative ring with unit, Poisson algebras and associative algebras, as are all varieties of such algebras, and crossed modules over those. In fact, all \emph{Orzech categories of interest}~\cite{Orz72,CGVdL15b} are examples. On the other hand, the category of loops is semi-abelian but does not satisfy \SH. Further details can be found in~\cite{Jan03,HVdL11,MFVdL12}. 

The work of Janelidze~\cite{Jan03} provides an explicit description of internal crossed modules in terms of internal actions, together with an equivalence of categories $\XMod(\A)\simeq \Grpd(\A)$ which extends the equivalence $\Act(\A)\simeq \Pt(\A)$. Since the category of internal groupoids in a semi-abelian category is again semi-abelian~\cite{BG02}, the category of internal crossed modules is semi-abelian as well. It is explained in~\cite{HVdL11} that under \SH, Higgins commutators suffice for the description of internal groupoids. Furthermore, the characterisation of internal crossed modules given in~\cite{Jan03} simplifies---see below. This is our main reason for working in this context. 

\begin{definition}
\label{defi:internal crossed modules}
In a semi-abelian category $\A$ with \SH, an \emph{internal crossed module} is a pair $(\partial\colon{M\to L},\xi)$ where $\partial\colon{M\to L}$ is a morphism in $\A$ and $\xi\colon{L\flat M\to M}$ is an internal action such that the diagram 
\[
\xymatrix{
M\flat M \ar[d]_-{\chi_M} \ar[r]^-{\partial\flat 1_M} \ar@{}[rd]|-{(*_1)} & L\flat M \ar@{}[rd]|-{(*_2)}\ar[d]^-{\xi} \ar[r]^-{1_L\flat\partial} & L\flat L \ar[d]^-{\chi_L} \\
M \ar@{=}[r] & M\ar[r]_-{\partial} & L
}
\]
commutes. $(*_1)$ is the \emph{Peiffer condition}, and $(*_2)$ the \emph{precrossed module condition}.
\end{definition}

In this general context we have been able to define, for each pair of coterminal internal crossed modules $(\mu\colon{M\to L},\xi_M)$ and $(\nu\colon{N\to L},\xi_N)$, a generalisation of the Brown-Loday non-abelian tensor product: see Figure~\ref{Figure Intro Tensor}. 
\begin{figure}
\[
\xymatrix@!0@=6em{
M\otimes N \pullback \ar[r] \ar[d] & K \ar@<1ex>[r] \ar@<-1ex>[r] \ar[d] & M \ar[l] \ar[d]\\
H \ar@<1ex>[d] \ar@<-1ex>[d] \ar[r] & Q_{M\otimes N} \ar@<1ex>[d] \ar@<-1ex>[d] \ar@<1ex>[r] \ar@<-1ex>[r] & M\rtimes L \ar[l] \ar@<1ex>[d]^-{d_M} \ar@<-1ex>[d]_-{c_M}\\
N \ar[u] \ar[r] & N\rtimes L \ar[u] \ar@<1ex>[r]^-{d_N} \ar@<-1ex>[r]_-{c_N} & L \ar[u]|-{e_M} \ar[l]|-{e_N}
}
\]
\caption{The tensor product as normalisation of a double groupoid.}\label{Figure Intro Tensor}
\end{figure}
In particular, our construction uses the universal property of the non-abelian tensor product described in~\cite{BL87}, through the equivalence $\Grpd(\A)\simeq\XMod(\A)$ between the categories of internal groupoids and internal crossed modules. For further details see~\cite{dMVdL19.3} and the proof of Proposition~\ref{prop:delta_M is a regular epimorphism}.

\begin{lemma}
\label{lemma:regular epimorphisms in XMOD(A)}
Consider a morphism of internal crossed modules
\[
 (\partial\colon{M\to L},\xi)\xlongrightarrow{(f,l)}(\partial'\colon{M'\to L'},\xi').
\]
Then $(f,l)$ is a regular epimorphism in $\XMod(\A)$ if and only $f$ and $l$ are regular epimorphisms in $\A$.
\end{lemma}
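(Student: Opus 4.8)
The plan is to transport the statement along the equivalence $\XMod(\A)\simeq\Grpd(\A)$ and then reduce it to a question about a morphism of split extensions. Under this equivalence the crossed module $(\partial\colon M\to L,\xi)$ corresponds to the internal groupoid whose object of objects is $L$ and whose object of arrows is the semidirect product $M\rtimes L$, while $(f,l)$ corresponds to the internal functor with object component $l\colon L\to L'$ and arrow component the induced map of semidirect products $g\coloneqq f\rtimes l\colon M\rtimes L\to M'\rtimes L'$ (which exists precisely because $f$ is equivariant over $l$). I would then invoke the standard fact that over an exact Mal'tsev---hence in particular a semi-abelian---base, a morphism of internal groupoids is a regular epimorphism if and only if both its object component and its arrow component are regular epimorphisms in $\A$. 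Thus $(f,l)$ is a regular epimorphism in $\XMod(\A)$ if and only if $l$ and $g$ are regular epimorphisms in $\A$, and it remains to prove that, \emph{given} that $l$ is a regular epimorphism, $g$ is a regular epimorphism if and only if $f$ is.

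For this I would use that $g$ is a morphism of split short exact sequences $0\to M\xrightarrow{k_M}M\rtimes L\xrightarrow{\pi}L\to 0$ and its primed analogue, restricting to $f$ on the kernels, inducing $l$ on the quotients, and commuting with the splittings $i_L$, $i_{L'}$; recall that $k_M$ and $i_L$ are jointly extremal-epimorphic, as are $k_{M'}$ and $i_{L'}$. If $f$ and $l$ are regular epimorphisms, then $g\comp k_M=k_{M'}\comp f$ and $g\comp i_L=i_{L'}\comp l$ show that the image of $g$ contains both $M'$ and $L'$, whence this image is all of $M'\rtimes L'$ and $g$ is a regular epimorphism. Conversely, suppose $g$ is a regular epimorphism. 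Pulling $g$ back along the normal monomorphism $M'=\ker\pi'\hookrightarrow M'\rtimes L'$, and using that regular epimorphisms are stable under pullback in a regular category, the restriction $g|\colon g^{-1}(M')\to M'$ is again a regular epimorphism. Here $g^{-1}(M')=\ker(\pi'\comp g)=\ker(l\comp\pi)$ is the sub-split-extension $M\rtimes\ker l$, on which $g|$ restricts to $f$ on the kernel $M$ and sends the splitting copy of $\ker l$ to zero (since $l$ annihilates $\ker l$). As $M$ and that splitting copy are jointly extremal-epimorphic in $M\rtimes\ker l$, and regular epimorphisms preserve joins of images, the image of $g|$ equals the image of $f$; hence $f$ is a regular epimorphism.

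The main obstacle is the input I am borrowing: the componentwise characterisation of regular epimorphisms of internal groupoids over a semi-abelian base. Both implications of the lemma rest on it---the forward one uses that the object and arrow functors $\Grpd(\A)\to\A$ \emph{preserve} regular epimorphisms, and the backward one uses that they \emph{jointly reflect} them. The object functor is harmless, as it has a right adjoint (the codiscrete-groupoid functor) and so preserves all colimits; the genuinely delicate point is the arrow functor, i.e.\ that $(f,l)$ being a regular epimorphism in $\XMod(\A)$ forces $f\rtimes l$ to be a regular epimorphism in $\A$. Should one wish to keep the argument internal to $\XMod(\A)$, the reflecting direction can instead be obtained by hand: kernels in $\XMod(\A)$ are computed componentwise, so a morphism there is a monomorphism exactly when both components are, and taking the (regular epimorphism, monomorphism) factorisation of $(f,l)$ in the semi-abelian category $\XMod(\A)$ and comparing it componentwise with the split-extension computation above collapses its monomorphism part to an isomorphism. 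This isolates the preservation statement for the arrow functor as the one remaining nontrivial ingredient, which is exactly what the split-extension and joint-extremal-epimorphy arguments are designed to handle.
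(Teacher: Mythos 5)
Your proof is correct and follows essentially the same route as the paper: both arguments transport the statement along $\XMod(\A)\simeq\Grpd(\A)$, invoke the componentwise characterisation of regular epimorphisms of internal groupoids (which the paper likewise does not reprove, citing \cite{Gra99} for pointwise coequalisers in $\Grpd(\A)$), and then reduce everything to comparing $f\rtimes l$ with $f$ along the morphism of split short exact sequences. The only difference is that where the paper cites the Short Five Lemma for regular epimorphisms \cite[Lemma~4.2.5]{BB04}, you verify both directions by hand---via the joint extremal-epimorphicity of the kernel and the splitting, and pullback stability of regular epimorphisms along $\ker\pi'\to M'\rtimes L'$---which is a sound, slightly more self-contained rendering of the same step.
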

\begin{proof}
In the category $\RG(\A)$ of reflexive graphs in $\A$, coequalisers are computed pointwise, and due to Theorem~$3.1$ and Lemma~$3.1$ in~\cite{Gra99} this implies that also in $\Grpd(\A)$ the coequalisers are computed pointwise. This means that a morphism 
\[
(X,L,d,c,e,m)\xlongrightarrow{(x,l)}(X',L',d',c',e',m')
\]
is the coequaliser of $(g_0,g_1)$ and $(h_0,h_1)$ in $\Grpd(\A)$ if and only if $l$ is the coequaliser $c_{g_1,h_1}$ and if $x$ is the coequaliser $c_{g_0,h_0}$. Using the equivalence of categories $\XMod(\A)\cong \Grpd(\A)$ and the diagram 
\[
\xymatrixcolsep{4pc}
\xymatrixrowsep{3pc}
\xymatrix{
M \ar[d]_-{f} \ar[r]^-{k_{d}} & X \ar[d]^-{x} \ar@<1ex>[r]^-{d} \ar@<-1ex>[r]_-{c} & L \ar[d]^-{l} \ar[l]|-{e}\\
M' \ar[r]_-{k_{d'}} & X' \ar@<1ex>[r]^-{d'} \ar@<-1ex>[r]_-{c'} & L' \ar[l]|-{e'}
}
\]
where $X=M\rtimes_{\xi}L$, $X'=M'\rtimes_{\xi'}L$ and $x=f\rtimes l$, we conclude that $(f,l)$ is a regular epimorphism in $\XMod(\A)$ if and only if both $l$ and $x$ are regular epimorphisms in~$\A$. Now it suffices to apply the ``Short Five Lemma for regular epimorphisms''---item~5 in \cite[Lemma~4.2.5]{BB04}---to finish the proof.
\end{proof}

\begin{lemma}
\label{lemma:M over the commutator is abelian}
For any internal crossed module $(\partial\colon{M\to L},\xi)$, the object ${M}/{\left[L,M\right]}$ is abelian.
\end{lemma}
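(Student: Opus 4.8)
The plan is to reduce the statement to a single containment of Higgins commutators inside the semidirect product $X\coloneqq M\rtimes_{\xi}L$, and then to extract that containment directly from the Peiffer condition $(*_1)$.

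First I would set up the reduction. By Proposition~\ref{prop:higgins=coinvariance} the quotient $M/[L,M]$ is the object of coinvariants $C_s$, and the canonical regular epimorphism $q\colon M\to M/[L,M]$ has kernel $[L,M]=\llbracket L,M\rrbracket\leq M$. Since an object $Y$ is abelian precisely when $[Y,Y]=0$, and since $q$ is a regular epimorphism, item~(2) of Proposition~\ref{Higgins properties} gives
\[
[M/[L,M],\,M/[L,M]]=[q(M),q(M)]=q([M,M]),
\]
where $[M,M]$ is the self-commutator of $M$, computed in $M$ or equivalently in $X$ since $k\colon M\to X$ is a monomorphism. Thus $M/[L,M]$ is abelian if and only if $q([M,M])=0$, i.e. if and only if $[M,M]\leq\ker q=[L,M]$. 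Everything therefore comes down to proving the containment $[M,M]\leq[L,M]$ of subobjects of $X$.

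To prove it, I would exhibit one equality of morphisms $M\diamond M\to X$ whose two images realise the two commutators. Writing $k\colon M\to X$ for the kernel of $p$ and $s\colon L\to X$ for the splitting, the commutator $[M,M]$ is by definition the image of $\binom{k}{k}\comp\iota_{M,M}$, while $[L,M]$ is the image of $\binom{s}{k}\comp\iota_{L,M}$; precomposing the latter with the functorial arrow $\partial\diamond 1_M\colon M\diamond M\to L\diamond M$ can only shrink the image. The claim is then the identity
\[
\binom{k}{k}\comp\iota_{M,M}=\binom{s}{k}\comp\iota_{L,M}\comp(\partial\diamond 1_M),
\]
which yields $[M,M]\leq[L,M]$ upon passing to images. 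I would verify this identity by chaining four canonical facts. Using that the conjugation action is the fold map restricted to the $\flat$-subobject, $\chi_M=\binom{1_M}{1_M}\comp k_{M,M}$, together with $\iota_{M,M}=k_{M,M}\comp i_{M,M}$, gives $\binom{k}{k}\comp\iota_{M,M}=k\comp\chi_M\comp i_{M,M}$; the Peiffer condition $\chi_M=\xi\comp(\partial\flat 1_M)$ of Definition~\ref{defi:internal crossed modules} rewrites this as $k\comp\xi\comp(\partial\flat 1_M)\comp i_{M,M}$; naturality of the co-smash inclusion, $(\partial\flat 1_M)\comp i_{M,M}=i_{L,M}\comp(\partial\diamond 1_M)$, moves the inclusion past $\partial$; and finally the defining relation $k\comp\xi=\binom{s}{k}\comp k_{L,M}$ of the action associated with the point, combined again with $\iota_{L,M}=k_{L,M}\comp i_{L,M}$, produces the right-hand side.

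The main obstacle is purely bookkeeping: threading the co-smash inclusions $i_{-,-}\colon{-}\diamond{-}\to{-}\flat{-}$ and the coproduct inclusions $k_{-,-}\colon{-}\flat{-}\to{-}+{-}$ so that the Peiffer square $(*_1)$, which lives in the $\flat$-world, can be compared against the commutators, which are defined in the coproduct world. Once the dictionary $\iota_{L,M}=k_{L,M}\comp i_{L,M}$ (and its analogue for the pair $M,M$) and the fold-map description of $\chi_M$ are in place, no input beyond the Peiffer condition and the elementary behaviour of images under composition is needed; in particular the precrossed module condition $(*_2)$ and the \SH\ condition play no explicit role in this argument.
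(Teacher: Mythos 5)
Your proof is correct and takes essentially the same route as the paper: your key identity $\binom{k}{k}\comp\iota_{M,M}=\binom{s}{k}\comp\iota_{L,M}\comp(\partial\diamond 1_M)$ is precisely the commutative diagram the paper extracts from the Peiffer condition (naturality of the co-smash inclusion plus $\chi_M=\xi\comp(\partial\flat 1_M)$), together with the same identification of the two images as $[M,M]$ and $[L,M]$ via $k\comp\xi=\binom{s}{k}\comp k_{L,M}$. The only cosmetic difference is the finish: the paper takes horizontal cokernels to obtain a regular epimorphism $M/[M,M]\to M/[L,M]$ and concludes that $M/[L,M]$ is a quotient of an abelian object, whereas you push the containment $[M,M]\leq[L,M]$ through the quotient $q$ using item~(2) of Proposition~\ref{Higgins properties}.
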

\begin{proof}
The Peiffer condition yields a commutative diagram
\[
\xymatrix{
M\diamond M \ar[r]^-{i_{M,M}} \ar[d]_-{\partial\diamond 1_M} & M\flat M \ar[r]^-{\chi_M} \ar[d]_-{\partial\flat 1_M} & M \ar@{=}[d]\\
L\diamond M \ar[r]_-{i_{L,M}} & L\flat M \ar[r]_-{\xi} & M.
}
\]
It is clear that the image of the top composition is $[M,M]$, while the image of $\xi\comp i_{L,M}$ is $[L,M]$ because this commutator is the image of
\[
k_\xi\comp \xi\comp i_{L,M}=\binom{i_\xi}{k_\xi}\comp \kappa_{L,M}\comp i_{L,M}=\binom{i_\xi}{k_\xi}\comp \iota_{L,M}\colon L\diamond M\to M\rtimes_\xi L
\]
in $M\leq M\rtimes_\xi L$. Taking cokernels horizontally, we obtain a regular epimorphism ${M}/{\left[M,M\right]}\to{M}/{\left[L,M\right]}$. Hence ${M}/{\left[L,M\right]}$ is abelian, as a quotient of an abelian object.
\end{proof}

\begin{remark}\label{RemarkIsTrivialisation}
This means that for the action $\xi$ of a crossed module $(\partial\colon{M\to L},\xi)$, there is no difference between the procedure of~\ref{subsec:TrivialExtensionReflection} and the construction described in~\ref{ConstructionTrivialAction}. 
\end{remark}

\subsection{Crossed modules over a fixed object}\label{XMod_L}
Let $L$ be an object of $\A$. A \emph{crossed module over $L$} or \emph{$L$-crossed module} is a crossed module $(\partial\colon{M\to L},\xi)$ in $\A$ where the codomain of $\partial$ is the given object $L$. Together with morphisms of crossed modules that keep the object $L$ fixed, this defines the category $\XMod_L(\A)$.

A key issue here is, that $\XMod_L(\A)$ is not a semi-abelian category: indeed, it is not pointed, since the initial $L$-crossed module is $(0\colon 0\to L, \tau^L_0)$, while the terminal $L$-crossed module is $(1_L\colon L\to L, \tau^L_L)$. On the other hand, it is still \emph{quasi-pointed} (in the sense of~\cite{Bou01}, which means that $0\to 1$ is a monomorphism), regular and protomodular, which makes it a so-called \emph{sequentiable} category. Furthermore, it is Barr-exact, so it is actually not far from being semi-abelian. Indeed, results such as Lemma~\ref{lemma:ker of regular pushout is regular epimorphism} stay valid in this context.

\begin{remark}
\label{rmk:kernels in XMOD_L(A)}
Consider a morphism of $L$-crossed modules
\[
(M\xrightarrow{\partial}L,\xi)\xlongrightarrow{(f,1_L)}(M'\xrightarrow{\partial'}L,\xi').
\]
The kernel of this morphism is given by 
\[
(K_f\xrightarrow{0}L,\underline{\xi})\xlongrightarrow{(k_f,1_L)}(M\xrightarrow{\partial}L,\xi),
\]
where the action $\underline{\xi}$ is induced by the universal property of $K_f$ as shown in
\[
\xymatrix{
L\flat K_f \ar@{.>}[d]_-{\underline{\xi}} \ar[r]^-{1_L\flat k_f} & L\flat M \ar[d]_-{\xi} \ar[r]^-{1_L\flat f} & L\flat M' \ar[d]^-{\xi'}\\
K_f \ar[r]_-{k_f} & M \ar[r]_-{f} & M'.
}
\]
It is easy to see that this is an $L$-crossed module.
\end{remark}

\begin{lemma}
\label{lemma:monomorphisms in XMOD_L(A)}
Consider a morphism of $L$-crossed modules
\[
 (M\xrightarrow{\partial}L,\xi)\xlongrightarrow{(f,1_L)}(M'\xrightarrow{\partial'}L,\xi').
\]
Then $(f,1_L)$ is a monomorphism in $\XMod_L(\A)$ if and only if $f$ is mono in $\A$.\noproof 
\end{lemma}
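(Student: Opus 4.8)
My plan is to prove the two implications separately. The forward implication is the one requiring an idea; the backward one is purely formal.

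\emph{If $f$ is mono in $\A$, then $(f,1_L)$ is mono in $\XMod_L(\A)$.} I would argue via faithfulness. The assignment $(\partial\colon M\to L,\xi)\mapsto M$, $(f,1_L)\mapsto f$ defines a functor $\XMod_L(\A)\to\A$ which is faithful, because composition of morphisms of $L$-crossed modules is computed componentwise and the component over $L$ is always forced to be $1_L$, so a morphism is completely determined by its domain-component. A faithful functor reflects monomorphisms, hence $f$ mono in $\A$ forces $(f,1_L)$ mono. (Equivalently, and just as quickly: two parallel morphisms equalised by $(f,1_L)$ have domain-components equalised by $f$, hence coincide.)

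\emph{If $(f,1_L)$ is mono in $\XMod_L(\A)$, then $f$ is mono in $\A$.} Here I would use kernels. By Remark~\ref{rmk:kernels in XMOD_L(A)}, the kernel of $(f,1_L)$ is the $L$-crossed module $(K_f\xrightarrow{0}L,\underline{\xi})$, where $K_f$ is the kernel of $f$ in $\A$; exactly as in Corollary~\ref{cor:kernel of L-point morphism}, it arises as the pullback of $(f,1_L)$ along the canonical morphism $\iota\colon(0\to L,\tau^L_0)\to(M'\to L,\xi')$ out of the initial object. Since $\XMod_L(\A)$ is only quasi-pointed, I cannot simply invoke ``zero kernel''; instead I exploit the pullback directly. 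In the pullback square the projection $\pi\colon(K_f\to L,\underline{\xi})\to(0\to L,\tau^L_0)$ sits opposite the monomorphism $(f,1_L)$, so $\pi$ is itself a monomorphism. On the other hand, $(0\to L,\tau^L_0)$ is the initial object, so there is a unique morphism $\iota'$ from it into the kernel, and $\pi\comp\iota'$ is the unique endomorphism of the initial object, namely the identity. From $\pi\comp\iota'=1$ together with $\pi$ mono one deduces $\iota'\comp\pi=1$ in the usual way, so $\pi$ is an isomorphism and the kernel is the initial object, i.e.\ $K_f\cong 0$. Finally, because $\A$ is semi-abelian, $\ker f=K_f=0$ is equivalent to $f$ being a monomorphism in $\A$, which closes the argument.

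The step I expect to be most delicate is the passage from ``$(f,1_L)$ mono'' to ``the kernel is the initial object'', precisely because $\XMod_L(\A)$ is quasi-pointed rather than pointed: the initial object $(0\to L,\tau^L_0)$ is not a zero object and need not be strict initial, so the familiar pointed-protomodular slogan ``a morphism is mono if and only if its kernel vanishes'' must be re-derived. The pullback-of-a-monomorphism argument above is what makes this work without appealing to a zero object; alternatively one may invoke the corresponding characterisation for sequentiable categories, valid by protomodularity of $\XMod_L(\A)$.
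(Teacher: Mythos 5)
Your proof is correct; note that the paper states this lemma without proof (it is marked as such), so there is no official argument to compare against --- but nothing in your write-up fails. The forward direction via faithfulness of the domain-projection functor $\XMod_L(\A)\to\A$ (morphisms are pairs $(f,1_L)$, so the functor is injective on hom-sets, and faithful functors reflect monomorphisms) is exactly right. The backward direction is also sound, and you correctly identify the quasi-pointed subtlety: your chain --- the kernel of $(f,1_L)$ is the pullback along the map from the initial object $(0\colon 0\to L,\tau^L_0)$ (consistent with Remark~\ref{rmk:kernels in XMOD_L(A)} and the pullback square displayed in Lemma~\ref{Central via good kernel}); pullback-stability of monomorphisms; the purely formal fact that a monomorphism into an initial object is split by the unique map and hence an isomorphism; and finally the homological-category fact that in the semi-abelian $\A$ a morphism with zero kernel is mono --- is complete. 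That said, there is a shortcut which handles both implications at once and avoids kernels, quasi-pointedness and semi-abelianness of $\A$ entirely: limits in $\XMod_L(\A)$ are computed on the domain component with the $L$-component constant (the paper itself uses this for pullbacks, e.g.\ in the proof of Lemma~\ref{lemma:equivalent condition to triviality} and in Figure~\ref{FigKernelPair}), so the kernel pair of $(f,1_L)$ is $((\pi_1,1_L),(\pi_2,1_L))$ where $(\pi_1,\pi_2)$ is the kernel pair of $f$ in $\A$; since a morphism is a monomorphism precisely when its two kernel-pair projections coincide, and $(\pi_1,1_L)=(\pi_2,1_L)$ if and only if $\pi_1=\pi_2$, the equivalence follows in any finitely complete $\A$. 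Your argument buys nothing extra here but is a good illustration of how the pointed-category slogan ``mono iff zero kernel'' must be re-derived in the sequentiable setting, which is worth having spelled out.
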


\section{Central extensions of crossed modules, ad-hoc approach}\label{Section Ad Hoc}
We let $\A$ be a semi-abelian category that satisfies \SH. Copying what happens for groups and Lie algebras, we make the following definitions. By~\ref{higgins commutator and central extensions} we know about first one, of course; later on we shall also justify the latter two from a Galois theory perspective.

\begin{definition}\label{DefCentralWRTAb}
A \emph{central extension} in $\A$ (with respect to $\Ab(\A)$) is a regular epimorphism $f\colon {X\to Y}$ with kernel $K$ such that $[K,X]=0$.
\end{definition}

\begin{example}\label{Ex:CentralExtInXMod}
If $(\partial\colon{M\to L},\xi)$ is a crossed module, then the regular epimorphism in the image factorisation of $\partial$ is a central extension; in other words, $[K_\partial,M]=0$.
\end{example}

\begin{definition}\label{Ad-hoc centrality}
Let $L$ be an object of $\A$. A \emph{central extension} in $\XMod_L(\A)$ is a regular epimorphism of $L$-crossed modules
\[
(M\xrightarrow{\partial}L,\xi)\xlongrightarrow{(f,1_L)}(M'\xrightarrow{\partial'}L,\xi')
\]
where for the kernel $K$ of $f$ we have that $[L,K]=0$.
\end{definition}

\begin{remark}\label{Central via kernel has trivial action}
Notice that this means that the kernel $0\colon {K\to L}$ of $(f,1_L)$ has a trivial $L$-action.
\end{remark}

\begin{definition}
Let $L$ be an object of $\A$. A \emph{perfect object} in $\XMod_L(\A)$ is an $L$-crossed module $(\partial\colon{M\to L},\xi)$ such that $[L,M]=M$.
\end{definition}

\begin{lemma}
\label{lemma:lemma 1}
An $L$-crossed module $(\partial\colon{M\to L},\xi)$ is perfect if and only if in the corresponding internal groupoid
\[
\xymatrix{
M\rtimes L\ar@<1ex>[r]^-{d} \ar@<-1ex>[r]_-{c} & L \ar[l]|-{e}
}
\]
the normal closure $\cl_{M\rtimes L}(L)$ of $e$ is all of $M\rtimes L$.
\end{lemma}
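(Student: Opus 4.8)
The plan is to reduce the statement to the analysis of the coinvariants reflection carried out in Section~\ref{Section-3}, applied to the underlying $L$-action of the crossed module. First I would note that, under the equivalence $\XMod(\A)\simeq\Grpd(\A)$, the object of arrows of the groupoid associated with $(\partial\colon M\to L,\xi)$ is the semidirect product $X\coloneqq M\rtimes L$, and that its unit $e$ is exactly the section $i_\xi$ of the split extension
\[
\xymatrix{0\ar[r]&M\ar[r]^-{k_\xi}&M\rtimes L\ar@<.5ex>[r]^-{\pi_\xi}&L\ar@<.5ex>[l]^-{i_\xi}\ar[r]&0}
\]
determined by $\xi$. Hence the normal closure $\cl_{M\rtimes L}(L)$ of $e$ is precisely the normal closure $\cl_X(L)$ of the subobject $L\leq X$ given by the image of the splitting, exactly in the setting of Construction~\ref{ConstructionTrivialAction} and Definition~\ref{defi:coinvariance commutator}.

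Next I would unwind perfectness. By definition $(\partial\colon M\to L,\xi)$ is perfect exactly when $[L,M]=M$, where $[L,M]$ is the Higgins commutator of $L$ and $M$ viewed as subobjects of $X=M\rtimes L$. This is the very equality treated in Remark~\ref{rmk:perfect L-actions}: combining Proposition~\ref{prop:higgins=coinvariance}, which identifies $[L,M]$ with the coinvariance commutator $\llbracket L,M\rrbracket$, with the description $\cl_X(L)=L\vee [L,M]$ from~\ref{higgins commutator and central extensions}, one obtains that $[L,M]=M$ if and only if $\cl_X(L)=X$. Transporting this equivalence through the identifications of the first paragraph yields the claim.

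The argument is essentially a chain of already-established equivalences, so there is no genuinely hard computational step; the only point requiring attention is the bookkeeping. Concretely, one must check that the subobject $L$ occurring in the perfectness condition $[L,M]=M$ really is the image of the groupoid unit $e=i_\xi$ in $M\rtimes L$, and that $M$ and this copy of $L$ jointly generate $M\rtimes L$. Both facts are immediate from the standard semidirect-product description, since $k_\xi$ and $i_\xi$ are jointly extremal-epimorphic. This gives $[L,M]=M\implies\cl_X(L)=L\vee M=X$ in one direction, and, using $\llbracket L,M\rrbracket=M\wedge\cl_X(L)$, the implication $\cl_X(L)=X\implies M=M\wedge\cl_X(L)=\llbracket L,M\rrbracket=[L,M]$ in the other.
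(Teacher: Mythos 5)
Your proof is correct and follows essentially the same route as the paper: the paper's own proof just cites Remark~\ref{rmk:perfect L-actions}, whose content---$[L,M]=M$ if and only if $\cl_X(L)=X$, via Proposition~\ref{prop:higgins=coinvariance}, the joint extremal-epimorphicity of $k_\xi$ and $i_\xi$, and the identity $\llbracket L,M\rrbracket=M\wedge\cl_X(L)$---is exactly what you have unwound explicitly. The only quibble is attributional: the equality $\cl_X(L)=L\vee[L,M]$ comes from the split extension in Definition~\ref{defi:coinvariance commutator} combined with Proposition~\ref{prop:higgins=coinvariance}, whereas~\ref{higgins commutator and central extensions} directly gives $\cl_X(L)=L\vee[L,X]$; either version makes your argument go through.
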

\begin{proof}
This follows immediately from the explanation in Remark~\ref{rmk:perfect L-actions}.
\end{proof}

\begin{lemma}[Proposition~3.9 in \cite{EverVdL2}]
\label{lemma:coeq(d,c)=coker(c circ k_d)}
For a reflexive graph with its normalisation
\[
\xymatrixcolsep{3pc}
\xymatrix{
K_d \ar[r]^-{k_d} & C_1 \ar@<1ex>[r]^-{d} \ar@<-1ex>[r]_-{c} & C_0, \ar[l]|-{e}
}
\]
the coequaliser $C_{(d,c)}$ of $d$ and $c$ is isomorphic to the cokernel $C_{c\circ k_d}$ of $c\comp k_d$.\noproof
\end{lemma}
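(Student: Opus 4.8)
The plan is to compare the two quotients of $C_0$ directly through their universal properties, by showing that a morphism out of $C_0$ coequalises $d$ and $c$ precisely when it annihilates $c\comp k_d$. Since both $C_{(d,c)}$ and $C_{c\comp k_d}$ are quotients of $C_0$, this identification of the classes of morphisms they classify forces them to be canonically isomorphic, so there is no need to construct a comparison map by hand.

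First I would fix a test morphism $h\colon C_0\to Z$. Because $k_d$ is the kernel of $d$ we have $d\comp k_d=0$, so the condition $h\comp d=h\comp c$ immediately yields $h\comp c\comp k_d=h\comp d\comp k_d=0$; hence every morphism coequalising $(d,c)$ factors through the cokernel of $c\comp k_d$. For the converse — the substantive direction — I would exploit that $d$ is a split epimorphism with section $e$ and kernel $k_d$, so that, by the fact about split extensions recalled in Section~\ref{Section-3} (in such a split extension $k_d$ and $e$ are jointly extremal-epimorphic, in particular jointly epimorphic), it suffices to verify $h\comp d=h\comp c$ after precomposing with $k_d$ and with $e$ separately. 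Precomposing with $k_d$ gives $h\comp d\comp k_d=0=h\comp c\comp k_d$ whenever $h\comp c\comp k_d=0$, and precomposing with $e$ gives $h\comp d\comp e=h=h\comp c\comp e$ since $d\comp e=1_{C_0}=c\comp e$. Joint epimorphicity of $(k_d,e)$ then delivers $h\comp d=h\comp c$.

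Combining the two directions, the morphisms out of $C_0$ that coequalise $d$ and $c$ are exactly those that vanish on $c\comp k_d$, so $C_{(d,c)}$ and $C_{c\comp k_d}$ represent the same functor and are therefore isomorphic (via the unique comparison induced by either universal property). The only non-formal ingredient is the joint extremal-epimorphicity of the pair $(k_d,e)$; everything else is a routine manipulation of universal properties. Accordingly I do not anticipate a genuine obstacle, beyond ensuring that we work in a setting where zero morphisms, the kernel $k_d$, and the relevant (co)equalisers are available and where that joint-epimorphicity holds — which is exactly the semi-abelian framework fixed throughout.
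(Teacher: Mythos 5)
Your proof is correct, and it is complete. Note that the paper itself offers no proof of this lemma: it is stated with a citation to Proposition~3.9 of~\cite{EverVdL2} and marked as proved elsewhere, so the comparison here is with the cited source rather than with an argument in the text. Your route --- showing that a morphism $h\colon C_0\to Z$ satisfies $h\circ d=h\circ c$ if and only if $h\circ c\circ k_d=0$, by testing against the pair $(k_d,e)$, and then invoking uniqueness of the representing quotient --- is exactly the standard argument, and all its ingredients are available in the ambient setting: the paper explicitly recalls in Section~\ref{Section-3} that in a split extension the kernel and the section are jointly extremal-epimorphic, and your parenthetical ``in particular jointly epimorphic'' is justified because the category has equalisers (both legs factor through the equaliser of $h\circ d$ and $h\circ c$, which extremality then forces to be an isomorphism). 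It is worth observing that the authors use precisely this device themselves in the proof of Proposition~\ref{prop:delta_M is a regular epimorphism}, where the cokernel of $e$ is computed by hand via the jointly epimorphic pair $(s,e')$; so your argument is not only valid but entirely in the spirit of the paper, and it fills in the external citation with a self-contained proof.
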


\begin{proposition}
\label{prop:delta_M is a regular epimorphism}
Given a crossed module $(\partial\colon{M\to L},\xi)$ we can construct the crossed square
\begin{equation}\label{DiagTensor}
\vcenter{\xymatrix{
L\otimes M \ar[d] \ar[r]^-{\delta_M} & \ar[d]^-{\partial} M\\
L \ar@{=}[r]& L
}}
\end{equation}
by taking the non-abelian tensor product~\cite{dMVdL19.3}. Then $\delta_M$ is a regular epimorphism if and only if $(\partial\colon{M\to L},\xi)$ is perfect.
\end{proposition}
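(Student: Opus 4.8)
The plan is to compute the image of $\delta_M$ and show it equals the Higgins commutator $[L,M]\leq M$; the statement then follows formally. Indeed, once $\mathrm{Im}(\delta_M)=[L,M]$ is known, the regular epimorphism--monomorphism factorisation in $\A$ shows that $\delta_M\colon L\otimes M\to M$ is a regular epimorphism if and only if its image is all of $M$, that is, $[L,M]=M$, which is exactly perfectness of $(\partial\colon M\to L,\xi)$. Since the square~\eqref{DiagTensor} is the identity on $L$, the map $\delta_M$ is a morphism in $\XMod_L(\A)$ over $1_L$, so by the evident $L$-fixed analogue of Lemma~\ref{lemma:regular epimorphisms in XMOD(A)}, being a regular epimorphism in $\XMod_L(\A)$ reduces to $\delta_M$ being a regular epimorphism on the $M$-component in $\A$. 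Thus the whole argument concentrates on identifying the image.

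For the inclusion $\mathrm{Im}(\delta_M)\leq[L,M]$ I would pass to the coinvariants. By Lemma~\ref{lemma:M over the commutator is abelian}, Construction~\ref{ConstructionTrivialAction} and Remark~\ref{RemarkIsTrivialisation}, the quotient $q\colon M\to M/[L,M]$ is the reflection of the action $\xi$ into $\TrivAct_L(\A)$, so that $M/[L,M]$ carries the trivial $L$-action (compare Proposition~\ref{prop:higgins=coinvariance}, where $C_s\cong M/[L,M]$). Under the universal property of the non-abelian tensor product~\cite{dMVdL19.3, BL87}, the composite $q\comp\delta_M\colon L\otimes M\to M/[L,M]$ classifies the crossed pairing obtained from the universal one by post-composition with $q$; but this pairing is a displacement which vanishes identically once the target action is trivial. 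Hence $q\comp\delta_M=0$, and therefore $\mathrm{Im}(\delta_M)\leq\ker q=[L,M]$.

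For the reverse inclusion $[L,M]\leq\mathrm{Im}(\delta_M)$ I would exhibit the canonical commutator map as factoring through $\delta_M$. Recall from the proof of Lemma~\ref{lemma:M over the commutator is abelian} that $[L,M]$ is precisely the image of $\xi\comp i_{L,M}\colon L\diamond M\to M$. The defining universal property of $L\otimes M$ provides a comparison $L\diamond M\to L\otimes M$ whose composite with $\delta_M$ is exactly this map $\xi\comp i_{L,M}$, since the tensor product is built so as to receive the universal cosmash pairing, of which $\xi\comp i_{L,M}$ is the value into $M$. Consequently $[L,M]=\mathrm{Im}(\xi\comp i_{L,M})\leq\mathrm{Im}(\delta_M)$, and combining the two inclusions yields $\mathrm{Im}(\delta_M)=[L,M]$.

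The main obstacle is the careful use of the universal property of the tensor product from~\cite{dMVdL19.3}: both factorisations above---the vanishing of $q\comp\delta_M$ in the coinvariants and the factorisation of $\xi\comp i_{L,M}$ through $\delta_M$---are where all the real content lies, as they require translating the double-groupoid construction of Figure~\ref{Figure Intro Tensor} into statements about the single leg $\delta_M$. Everything else is formal. As an alternative route to the first inclusion, one could instead compute $\mathrm{coker}(\delta_M)$ directly: viewing $\delta_M$ as the normalisation of the top groupoid $K\rightrightarrows M$ in Figure~\ref{Figure Intro Tensor} and applying Lemma~\ref{lemma:coeq(d,c)=coker(c circ k_d)} identifies $\mathrm{coker}(\delta_M)$ with the coinvariants $M/[L,M]$, which gives the equivalence ``$\delta_M$ is a regular epimorphism if and only if $M/[L,M]=0$'' in one stroke.
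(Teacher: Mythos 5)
Your overall plan---identify $\mathrm{Im}(\delta_M)$ with the commutator $[L,M]$ and conclude formally---is sound in outline (the identification is true, and your formal endgame via image factorisation is fine), but both inclusions rest on properties of $L\otimes M$ that you assert rather than prove, and you yourself admit these are ``where all the real content lies''. For $\mathrm{Im}(\delta_M)\leq [L,M]$ you argue that the crossed pairing into $M/[L,M]$ ``vanishes because the target action is trivial'': this is an element-style Brown--Loday argument with no internal counterpart supplied. In the setting of~\cite{dMVdL19.3} the universal property of the tensor product is initiality among crossed squares with the given shell---this is exactly how it is used later in the proof of Theorem~\ref{thm:perfect iff admits a universal central extension}---so it produces morphisms \emph{out of} $L\otimes M$ into other crossed squares; to obtain your factorisation this way you would first have to verify that $[L,M]\rightarrowtail M$ over $1_L$ underlies a crossed square, a genuine verification you do not carry out. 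For $[L,M]\leq\mathrm{Im}(\delta_M)$, the comparison $L\diamond M\to L\otimes M$ likewise does \emph{not} come from the universal property, and it is not constructed anywhere in the present paper; the naive candidate built from the pushout injections of $P$ fails, since its composite with $\widetilde{p_1}$ is the commutator evaluation $L\diamond M\to M\rtimes L$ whose image is $[L,M]$ itself, so it does not land in the kernel $K_{\widetilde{p_1}}$. In groups both facts are easy with elements, but nothing in your text establishes them internally.

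Your closing ``alternative route'' is in fact the skeleton of the paper's actual proof, but again the key step is asserted: Lemma~\ref{lemma:coeq(d,c)=coker(c circ k_d)} only converts $\mathrm{coker}(\delta_M)$ into the coequaliser of the legs of the top groupoid; identifying that coequaliser with $M/[L,M]$ is the whole point and does not follow from the lemma. The paper makes this tractable by moving the computation to where a universal property is available: by the Short Five Lemma for regular epimorphisms, $\delta_M$ is a regular epimorphism iff $\delta_M\rtimes 1_L$ is, iff $\delta_P=p_2\comp k_{p_1}$ is (the comparison $K_{p_1}\to K_{\widetilde{p_1}}$ being a regular epimorphism); since $\delta_P$ is proper, this holds iff $\mathrm{coker}(\delta_P)=0$, and the lemma turns this cokernel into $\mathrm{coeq}(p_1,p_2)$, which \emph{can} be computed by hand from the pushout property of $P$ to be the cokernel $C_e$ of $e\colon L\to M\rtimes L$. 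The conclusion then goes through the normal closure (Lemma~\ref{lemma:lemma 1} and Remark~\ref{rmk:perfect L-actions}): $C_e=0$ iff $\cl_{M\rtimes L}(L)=M\rtimes L$ iff the crossed module is perfect---with no direct appeal to $M/[L,M]$ or to $\mathrm{Im}(\delta_M)$. To repair your version, either exhibit the crossed-square structure on $[L,M]$ or construct the cosmash comparison explicitly (or cite such a statement from~\cite{dMVdL19.3} if it is proved there); as written, the two factorisations and the coequaliser identification are genuine gaps.
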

\begin{proof}
Let us recall the construction of the non-abelian tensor product in this special situation. First of all we denormalise both $(\partial\colon{M\to L},\xi)$ and $(1_L\colon L\to L,\chi_L)$ and we take the pushout of the two split monomorphisms $e$ and $\Delta_L$ to obtain the square of reflexive graphs
\[
\xymatrix@!0@=3em{
P \ar@<1ex>[dd] \ar@<-1ex>[dd] \ar@<1ex>[rr]^-{p_1} \ar@<-1ex>[rr]_-{p_2} && M\rtimes L \ar[ll] \ar@<1ex>[dd]^-{d} \ar@<-1ex>[dd]_-{c}\\
\\
L\times L \ar[uu] \ar@<1ex>[rr]^-{\pi_1} \ar@<-1ex>[rr]_-{\pi_2} && L \ar[uu]|-{e} \ar[ll]|-{\Delta_L}
}
\]
Then we take a certain quotient of $P$ that universally turns the diagram into a double groupoid as in Figure~\ref{Figure Induced Double Groupoid}. 
\begin{figure}
\resizebox{.4\textwidth}{!}
{$\xymatrix@!0@=5em{
P \ar[rd] \ar@<1ex>[ddd] \ar@<-1ex>[ddd] \ar@<1ex>[rrr]^-{p_1} \ar@<-1ex>[rrr]_-{p_2} &&& M\rtimes L \ar@{=}[d] \ar[lll]\\
& \widetilde{P} \ar@<1ex>[dd] \ar@<-1ex>[dd] \ar@<1ex>[rr]^-{\widetilde{p_1}} \ar@<-1ex>[rr]_-{\widetilde{p_2}} && M\rtimes L \ar[ll] \ar@<1ex>[dd]^-{d} \ar@<-1ex>[dd]_-{c}\\
\\
L\times L \ar[uuu] \ar@{=}[r] & L\times L \ar[uu] \ar@<1ex>[rr]^-{\pi_1} \ar@<-1ex>[rr]_-{\pi_2} && L \ar[uu]|-{e} \ar[ll]|-{\Delta_L}
}$}
\caption{The induced double groupoid.}\label{Figure Induced Double Groupoid}
\end{figure}
Finally we normalise the entire double groupoid to obtain Figure~\ref{Figure Tensor}. 
\begin{figure}
\resizebox{.4\textwidth}{!}
{$\xymatrix@!0@=7em{
L\otimes M \ar[r] \ar[d] & (L\otimes M)\rtimes M \ar@<1ex>[r] \ar@<-1ex>[r] \ar[d] & M \ar[l] \ar[d]\\
(L\otimes M)\rtimes L \ar@<1ex>[d] \ar@<-1ex>[d] \ar[r]^-{k_{\widetilde{p_1}}} & \widetilde{P} \ar@<1ex>[d] \ar@<-1ex>[d] \ar@<1ex>[r]^-{\widetilde{p_1}} \ar@<-1ex>[r]_-{\widetilde{p_2}} & M\rtimes L \ar[l] \ar@<1ex>[d]^-{d} \ar@<-1ex>[d]_-{c}\\
L \ar[u] \ar[r] & L\times L \ar[u] \ar@<1ex>[r]^-{\pi_1} \ar@<-1ex>[r]_-{\pi_2} & L \ar[u]|-{e} \ar[l]|-{\Delta_L}
}$}
\caption{The tensor product of $(1_L\colon L\to L,\chi_L)$ and $(\partial\colon{M\to L},\xi)$.}\label{Figure Tensor}
\end{figure}
Now we are ready to prove the result. First of all, by applying the ``Short Five Lemma for regular epimorphisms''~\cite[Lemma~4.2.5.5]{BB04} to the diagram 
\begin{align*}
\label{diag:non-abelian tensor product 2x1 version}
\vcenter{
\xymatrixcolsep{3pc}
\xymatrix{
L\otimes M \ar[d] \ar[r]^-{\delta_M} & M \ar[d]\\
(L\otimes M)\rtimes L \ar@<1ex>[d]^-{d_{\otimes}} \ar@<-1ex>[d]_-{c_{\otimes}} \ar[r]^-{\delta_M\rtimes 1_L} & M\rtimes L \ar@<1ex>[d]^-{d} \ar@<-1ex>[d]_-{c}\\
L \ar[u]|-{e_{\otimes}} \ar@{=}[r] & L \ar[u]|-{e}
}
}
\end{align*}
we deduce that $\delta_M$ is a regular epimorphism if and only if $\delta_M\rtimes 1_L$ is so. Then using the fact that all vertical arrows in the diagram
\[
\xymatrixcolsep{4pc}
\xymatrix{
K_{p_1} \ar@/^1pc/[rr]^-{\delta_P} \ar@{.>}[d] \ar[r]_-{k_{p_1}} & P \ar[d] \ar[r]_-{p_2} & M\rtimes L \ar@{=}[d]\\
K_{\widetilde{p_1}} \ar@/_1pc/[rr]_-{\delta_M\rtimes 1_L} \ar[r]^-{k_{\widetilde{p_1}}} & \widetilde{P} \ar[r]^-{\widetilde{p_2}} & M\rtimes L
} 
\]
are regular epimorphisms, we see that $\delta_M\rtimes 1_L$ is a regular epimorphism if and only if $\delta_P$ is so.

Since $\delta_{P}$ is a proper morphism (as a composite of a regular epimorphism with a normal monomorphism in a semi-abelian category), it is a regular epimorphism if and only if it has a trivial cokernel.

On the other hand, Lemma~\ref{lemma:coeq(d,c)=coker(c circ k_d)} says that for every reflexive graph, the cokernel of the normalisation is the same as the coequaliser of the two split epimorphisms: this implies that the first one is trivial if and only if the second one is so. Let us draw a picture involving the desired coequaliser $Q$.
\[
\xymatrix@!0@=3em{
P \ar@<1ex>@{.>}[dd] \ar@<-1ex>@{.>}[dd] \ar@<1ex>[rr]^-{p_1} \ar@<-1ex>[rr]_-{p_2} && M\rtimes L \ar[ll]|-{s} \ar@<1ex>@{.>}[dd]^-{d} \ar@<-1ex>@{.>}[dd]_-{c} \ar[rr]^-{q} && Q \ar@<1ex>@{.>}[dd] \ar@<-1ex>@{.>}[dd]\\
\\
L\times L \ar[uu]|-{e'} \ar@<1ex>[rr]^-{\pi_1} \ar@<-1ex>[rr]_-{\pi_2} && L \ar[uu]|-{e} \ar[ll]|-{\Delta_L} \ar[rr] && 0 \ar[uu]
}
\]
Here the second row involves the coequaliser of $\pi_1$ and $\pi_2$.

Let us prove by hand that $q$ is the cokernel of $e$. Consider $\gamma\colon M\rtimes L\to Z$ such that $\gamma \comp e=0$: since $q$ is the coequaliser of $p_1$ and $p_2$, in order to have a unique morphism $\phi\colon Q\to Z$ such that $\phi\comp q=\gamma$ it suffices that $\gamma\comp p_1=\gamma\comp p_2$. We use the fact that $P$ is a pushout of $e$ and $\Delta_L$, and hence that $(s,e')$ is a jointly epimorphic pair: we have the equalities
\[
\begin{cases}
\gamma\comp p_1\comp s=f=\gamma\comp p_2\comp s\\
\gamma\comp p_1\comp e'=\gamma\comp e\comp\pi_1=0=\gamma\comp e\comp\pi_2=\gamma\comp p_2\comp e'
\end{cases}
\]
and so $\gamma\comp p_1=\gamma\comp p_2$. This means that $Q\cong C_e$.

Finally by Lemma~\ref{lemma:lemma 1} we know that $C_e=0$ if and only if $(\partial\colon{M\to L},\xi)$ is perfect, and this proves our claim.
\end{proof}

\begin{proposition}
\label{prop:any regular epimorphism of L-crossed module is a central extension with respect to Ab}
Consider a regular epimorphism of crossed modules
\[
(M\xrightarrow{\partial}L,\xi)\xlongrightarrow{(f,1_L)}(M'\xrightarrow{\partial'}L,\xi').
\]
Then $f$ considered as a morphism in $\A$ is a central extension (with respect to $\Ab(\A)$, see Definition~\ref{DefCentralWRTAb}).
\end{proposition}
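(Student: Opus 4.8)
The plan is to reduce everything to Example~\ref{Ex:CentralExtInXMod} by means of a single observation: the kernel of $f$ sits inside the kernel of $\partial$. By Definition~\ref{DefCentralWRTAb} it suffices to establish two things, namely that $f$ is a regular epimorphism in $\A$ and that $[K_f,M]=0$, where $k_f\colon K_f\to M$ denotes the kernel of $f$ taken in $\A$.

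First I would check the regularity. Since the base component of the map is $1_L$, passing through the equivalence $\XMod(\A)\simeq\Grpd(\A)$ (restricted to the fixed object of objects $L$) turns $(f,1_L)$ into an identity-on-objects regular epimorphism of internal groupoids. Arguing exactly as in Lemma~\ref{lemma:regular epimorphisms in XMOD(A)}---coequalisers of groupoids are computed pointwise, so the object-of-morphisms component $M\rtimes L\to M'\rtimes L$ is a regular epimorphism in $\A$---the ``Short Five Lemma for regular epimorphisms'' applied to the split extensions $0\to M\to M\rtimes L\to L\to 0$ and $0\to M'\to M'\rtimes L\to L\to 0$ forces $f$ itself to be a regular epimorphism in $\A$.

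Next I would pin down the kernel and its position. By Remark~\ref{rmk:kernels in XMOD_L(A)} the object $K_f$ is precisely the $M$-component of the kernel of $(f,1_L)$ in $\XMod_L(\A)$, so it is the $K$ of Definition~\ref{DefCentralWRTAb}. Because $(f,1_L)$ is a morphism of $L$-crossed modules over the identity of $L$, the defining square gives $\partial=\partial'\comp f$, whence $\partial\comp k_f=\partial'\comp f\comp k_f=0$. The universal property of the kernel $K_\partial$ of $\partial$ then lets the monomorphism $k_f$ factor through $K_\partial$; that is, $K_f\leq K_\partial$ as subobjects of $M$.

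Finally I would close the argument with commutator calculus. Example~\ref{Ex:CentralExtInXMod} (a consequence of the Peiffer condition) yields $[K_\partial,M]=0$, and monotonicity of the Higgins commutator in its first variable (Proposition~\ref{Higgins properties}) upgrades the inclusion $K_f\leq K_\partial$ to $[K_f,M]\leq[K_\partial,M]=0$. Hence $[K_f,M]=0$, which is exactly the centrality condition of Definition~\ref{DefCentralWRTAb}. The conceptual content lies entirely in the inclusion $K_f\leq K_\partial$, after which centrality is immediate; the one place demanding genuine care is the regularity step, where one must make sure that a regular epimorphism in the fixed-base category $\XMod_L(\A)$ really descends to a regular epimorphism of the underlying objects in $\A$, rather than merely to an epimorphism.
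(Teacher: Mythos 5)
Your proof is correct and takes essentially the same route as the paper's: the paper likewise derives the monomorphism $K_f\to K_\partial$ from $\partial=\partial'\comp f$ and then concludes $[K_f,M]\leq[K_\partial,M]=0$ via monotonicity of the Higgins commutator and Example~\ref{Ex:CentralExtInXMod}. Your additional verification that $f$ descends to a regular epimorphism in $\A$ is sound, but it is exactly the content of Lemma~\ref{lemma:regular epimorphisms in XMOD(A)}, which the paper takes as read rather than reproving here.
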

\begin{proof}
Since $(\partial\colon{M\to L},\xi)$ is a crossed module we have that $[K_{\partial},M]=0$. From the commutativity of the triangle
\[
\xymatrix{
M \ar[rr]^-{f} \ar[rd]_-{\partial} && M' \ar[dl]^-{\partial'}\\
& L
}
\]
we can construct a monomorphism $K_{f}\to K_{\partial}$. Since Higgins commutators are monotone, this in turn induces a monomorphism $[K_{f},M]\to [K_{\partial},M]$. Recall (Example~\ref{Ex:CentralExtInXMod}) that $[K_{\partial},M]=0$. Hence also $[K_{f},M]$ is trivial, and therefore $f$ is a central extension (as a morphism in $\A$, with respect to~$\Ab(\A)$---see~\ref{higgins commutator and central extensions}).
\end{proof}

\begin{proposition}
\label{prop:(delta_M,1) is a central extension of L-crossed modules}
If $(\partial\colon{M\to L},\xi)$ is a perfect crossed module, then the morphism $(\delta_M,1_L)$ in \eqref{DiagTensor} is a central extension of $L$-crossed modules.
\end{proposition}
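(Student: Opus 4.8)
The plan is to check the two requirements of Definition~\ref{Ad-hoc centrality} for the morphism $(\delta_M,1_L)$: that it is a regular epimorphism of $L$-crossed modules, and that $[L,K]=0$ where $K$ denotes the kernel of $\delta_M$ in $\A$. The first requirement is cheap. Since $(\partial\colon M\to L,\xi)$ is perfect, Proposition~\ref{prop:delta_M is a regular epimorphism} makes $\delta_M$ a regular epimorphism in $\A$; arguing exactly as in Lemma~\ref{lemma:regular epimorphisms in XMOD(A)} (with $L$ held fixed, a morphism $(f,1_L)$ is a regular epimorphism in $\XMod_L(\A)$ if and only if $f$ is one in $\A$) then promotes it to a regular epimorphism of $L$-crossed modules. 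By Remark~\ref{Central via kernel has trivial action} all the substance of the proposition lies in the second requirement: the $L$-action on $K$ must be trivial, i.e.\ $[L,K]=0$. Note that Proposition~\ref{prop:any regular epimorphism of L-crossed module is a central extension with respect to Ab} only yields $[K,L\otimes M]=0$ (centrality as a morphism of $\A$ with respect to $\Ab(\A)$), which is a different condition from the one we need; so a genuinely new argument is required.

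To reach $[L,K]$ I would return to the double groupoid $\widetilde P$ of Figure~\ref{Figure Tensor}, of which \eqref{DiagTensor} is the normalisation, and single out its two ``kernels''. Set $A\coloneqq (L\otimes M)\rtimes L=\ker\widetilde{p_1}$, the kernel of the horizontal structure, and $B\coloneqq (L\otimes M)\rtimes M$, the kernel of the vertical structure; both are normal subobjects of $\widetilde P$ with $A\wedge B=L\otimes M$. By Remark~\ref{rmk:kernels in XMOD_L(A)} the kernel of $(\delta_M,1_L)$ is $K$ equipped with the $L$-action restricted from the action on $L\otimes M$, so the commutator $[L,K]$ to be annihilated lives in $K\rtimes L\hookrightarrow A\hookrightarrow\widetilde P$. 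Inside $\widetilde P$ we have $L\leq A$ through the canonical section and $K\leq L\otimes M=A\wedge B\leq B$; hence, by monotonicity of the Higgins commutator (Proposition~\ref{Higgins properties}(3)), $[L,K]\leq [A,B]$. It thus suffices to show that the two normal subobjects commute, $[A,B]=0$.

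The hard part will be precisely this equality $[A,B]=0$, and it is where \SH\ is used. The two internal groupoid structures carried by $\widetilde P$---the horizontal one over $M\rtimes L$ and the vertical one over $L\times L$---constitute a double groupoid, so the interchange law equips the pair of kernel-pair equivalence relations $R$ (of $\widetilde{p_1}$) and $S$ (of the vertical structure map) with a connector: the two relations Smith-centralise one another, $[R,S]_{\mathrm{Smith}}=0$. The normalisations of $R$ and $S$ are exactly $A$ and $B$, so the Smith is Huq condition converts the vanishing of this Smith commutator into the vanishing of the Huq commutator of $A$ and $B$, equivalently $[A,B]=0$. Together with the inequality of the previous paragraph this forces $[L,K]=0$ and finishes the proof. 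The two points demanding care are the clean identification of $A$ and $B$ as the normal subobjects attached to the two groupoid structures, and the verification---from the double groupoid axioms---that these structures genuinely Smith-centralise each other; granted that, \SH\ delivers the conclusion.
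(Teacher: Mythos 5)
Your reduction is sound up to and including the inequality $[L,K]\leq [A,B]$: the regular-epi step via Proposition~\ref{prop:delta_M is a regular epimorphism}, the identification of the kernel of $(\delta_M,1_L)$ as $K=K_{\delta_M}$ with the restricted action, and the observation that Proposition~\ref{prop:any regular epimorphism of L-crossed module is a central extension with respect to Ab} gives the wrong commutator are all correct. But the pivotal claim $[A,B]=0$ is false, and so is the assertion that the interchange law makes $\KP(\widetilde{p_1})$ and the vertical kernel pair Smith-centralise each other. A double groupoid structure does \emph{not} force the two kernel-pair relations on $\widetilde{P}$ to admit a connector; the obstruction is exactly the crossed pairing ($h$-map) of the associated crossed square, and for the square \eqref{DiagTensor} this pairing is the \emph{universal} one, $(l,m)\mapsto l\otimes m$. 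Concretely, in the group case the commutator in $\widetilde{P}$ of the canonical copies of $L\leq A$ and $M\leq B$ consists of the elements $l\otimes m\in L\otimes M$, so $[A,B]$ contains the image of the universal pairing and hence (normally) generates all of $L\otimes M\supseteq K$: your bound $[L,K]\leq[A,B]$ becomes vacuous. Indeed, if interchange implied the centralising you claim, then every crossed square would have trivial pairing and every non-abelian tensor product would vanish; but take $L=M$ a perfect group such as the alternating group $A_5$, with $\partial=1_L$ and conjugation action --- this satisfies the perfectness hypothesis of the proposition, yet $L\otimes M\cong SL(2,5)\neq 0$. (Connectors correspond to \emph{double equivalence relations}, i.e.\ to the degenerate double groupoids with trivial pairing, not to arbitrary ones.)

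The repair is to commute $K$ with $A$ rather than $A$ with $B$, using the groupoid direction you did not exploit: the left edge $\delta_M\rtimes 1_L\colon A=(L\otimes M)\rtimes L\to M\rtimes L$ of the crossed square is itself the differential of an internal crossed module, so by Example~\ref{Ex:CentralExtInXMod} its kernel $K$ satisfies $[K,A]=0$; since $L\leq A$ via $e_\otimes$ and $K\leq A$, monotonicity (Proposition~\ref{Higgins properties}) yields $[L,K]\leq[K,A]=0$. The paper's proof packages this same idea Galois-theoretically: $\delta_M\rtimes 1_L$ is a central extension in $\A$ with respect to $\Ab(\A)$, its pullback along $e\colon L\to M\rtimes L$ is the split extension $\overline{d}\colon K\rtimes L\to L$, which is central as a pullback of a central extension, hence trivial by Lemma~\ref{lemma:equivalent conditions to central}; and triviality of a split extension says precisely that the $L$-action on $K$ is trivial (\ref{subsec:TrivialExtensionReflection}), i.e.\ $[L,K]=0$ as required by Definition~\ref{Ad-hoc centrality} and Remark~\ref{Central via kernel has trivial action}.
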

\begin{proof}
We know from the proof of Proposition~\ref{prop:delta_M is a regular epimorphism} that $\delta_M\rtimes 1_L$ in \eqref{DiagTensor} is a regular epimorphism. Since, coming from a crossed square, it is also the differential of a crossed module, it is a central extension in $\A$ with respect to the Birkhoff subcategory $\Ab(\A)$. Via Corollary~\ref{cor:kernel of L-point morphism}, we may picture its kernel in $\Pt_L(\A)$ as the following pullback in $\A$.
\[
\vcenter{\xymatrix@!0@=4em{K_{\delta_M}\rtimes L \dottedpullback \ar@<.5ex>@{.>}[rd]^-{\overline{d}} \ar@{.>}[rr] \ar@{.>}[dd]_-{\overline{d}} && (L\otimes M)\rtimes L \ar@<.5ex>[ld]^-{d_\otimes} \ar[dd]^{\delta_M\rtimes 1_L}\\
& L \ar@<.5ex>[rd]^-{e} \ar@<.5ex>[ru]^-{e_\otimes} \ar@<.5ex>[ld]^-{1_L} \ar@<.5ex>@{.>}[lu]^-{\overline{e}} \\
L \ar[rr]_{e} \ar@<.5ex>[ru]^-{1_L} && M\rtimes L \ar@<.5ex>[lu]^-{d}}}
\]
The morphism $\overline{d}$ is a central extension, as a pullback of $\delta_M\rtimes 1_L$; on the other hand, it is split by $\overline{e}$. Hence by Lemma~\ref{lemma:equivalent conditions to central} it is a trivial extension, which by~\ref{subsec:TrivialExtensionReflection} implies that the action of $L$ on $K_{\delta_M}$ is trivial. This proves our claim.
\end{proof}

\begin{proposition}
\label{prop:central extensions of L-crossed modules induce crossed squares}
Any central extension of $L$-crossed modules 
\[
(M\xrightarrow{\partial}L,\xi)\xlongrightarrow{(f,1_L)}(M'\xrightarrow{\partial'}L,\xi')
\]
induces a crossed square
\[
\xymatrix{
M \ar[r]^-{f} \ar[d]_-{\partial} & M' \ar[d]^-{\partial'}\\
L \ar@{=}[r] & L.
}
\]
\end{proposition}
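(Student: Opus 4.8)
The plan is to recognise the square as a \emph{crossed square} in the sense of~\cite{dMVdL19.3}, i.e.\ as an internal crossed module in the (again semi-abelian) category $\XMod(\A)$, equivalently as the normalisation of an internal double groupoid. Reading the two columns as objects of $\XMod(\A)$, what is already given is a morphism
\[
(f,1_L)\colon(\partial\colon M\to L,\xi)\longrightarrow(\partial'\colon M'\to L,\xi')
\]
of $\XMod(\A)$ (note that $\partial'\comp f=\partial$). To promote it to an internal crossed module there, I must equip it with an internal action of the codomain on the domain and verify the precrossed and Peiffer conditions of Definition~\ref{defi:internal crossed modules}, now interpreted inside $\XMod(\A)$.

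First I would write down the action, i.e.\ the split extension $0\to(\partial)\to\mathbb{E}\to(\partial')\to0$, component-wise: on the base objects it is conjugation, giving $L\rtimes L$; on the top objects $M'$ acts on $M$ through $\partial'$ followed by $\xi$, giving $M\rtimes M'$; and $\partial\rtimes\partial'\colon M\rtimes M'\to L\rtimes L$ is the candidate structure map. With these choices the precrossed condition is just equivariance of $(f,1_L)$, which is immediate, and the ``visible'' part of the Peiffer condition follows from the Peiffer condition of $(\partial)$ together with $\partial'\comp f=\partial$. The point where this naive recipe breaks down---and where centrality must enter---is that for $\mathbb{E}$ to be a genuine object of $\XMod(\A)$ the base $L$ of the kernel $(\partial)$ has to act on $M'$, and a direct check shows that the Peiffer identity for $\mathbb{E}$ fails unless this cross-action is corrected by a ``shearing'' morphism $h\colon M'\times L\to M$: this $h$ is exactly the internal incarnation of the Brown--Loday $h$-map of the crossed square.

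The construction of $h$ is the heart of the matter. Since $(f,1_L)$ is a regular epimorphism of $L$-crossed modules, $f$ is a regular epimorphism in $\A$ (as in Lemma~\ref{lemma:regular epimorphisms in XMOD(A)}), hence so is $f\times 1_L\colon M\times L\to M'\times L$. I would define $h$ as the unique factorisation through $f\times 1_L$ of the conjugation--difference morphism $\psi\colon M\times L\to M$, namely the commutator pairing of $M$ with $e(L)$ inside $M\rtimes L$, which in the group case sends $(m,l)$ to ${}^{l}m\comp m^{-1}$. By the universal property of the regular epimorphism $f\times 1_L$ this factorisation exists as soon as $\psi$ coequalises its kernel pair, and that coequalisation reduces precisely to the assertion that $L$ acts trivially on $K=\ker f$. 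This is the centrality hypothesis $[L,K]=0$ recorded in Remark~\ref{Central via kernel has trivial action}, and it is the main (indeed essentially the only) obstacle in the proof.

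With $h$ in hand, what remains is to feed it into the action of $\mathbb{E}$ and to verify the crossed square/$h$-map axioms: equivariance of $h$, the two boundary identities (in group terms, $f\comp h(m',l)={}^{l}m'\comp(m')^{-1}$ and $\partial\comp h(m',l)=[l,\partial'm']$), and bilinearity up to the relevant actions. I expect all of these to be routine, following from the Peiffer and precrossed conditions of $(\partial)$ and $(\partial')$ and from the standard properties of the Higgins commutator collected in Proposition~\ref{Higgins properties}---monotonicity, the bilinearity formula~(6), and compatibility with regular epimorphisms---together with the trivial $L$-action on $K$ underlying the very definition of $h$. Assembling these verifications produces the internal crossed module structure on $(f,1_L)$ in $\XMod(\A)$, which is the desired crossed square.
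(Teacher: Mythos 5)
Your blueprint founders at the step you yourself identify as ``the heart of the matter''. The morphism $\psi\colon M\times L\to M$, described in the group case by $(m,l)\mapsto {}^{l}m\, m^{-1}$, is not a morphism of $\A$ at all: already for groups this assignment is not a homomorphism (take $M=L$ acting by conjugation; it is the commutator map), which is exactly why the Brown--Loday $h$-map is only a crossed pairing of underlying sets and why the internal theory of \cite{dMVdL19.3} encodes crossed squares via double groupoids rather than via an $h$-morphism on a product. The internal avatar of your $\psi$ is the composite $\xi\comp i_{L,M}\colon L\cosmash M\to M$ defined on the co-smash product, whose image is $[L,M]$ (cf.\ the proof of Lemma~\ref{lemma:M over the commutator is abelian}); so the intended factorisation through $f\times 1_L$ cannot even be set up. Moreover, if you repair the domain and try to descend along $1_L\cosmash f$ (a regular epimorphism by Lemma~5.11 of \cite{MM10}), the kernel of $1_L\cosmash f$ is not just $L\cosmash K$: by Proposition~2.24 of \cite{HVdL11}, as used in the proof of Theorem~\ref{thm:F is protoadditive}, it also contains a ternary part $L\cosmash K\cosmash M$. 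Hence the centrality hypothesis $[L,K]=0$ alone does not yield the descent; one must also control the ternary commutator $[L,K,M]$, and this is precisely where the condition \SH\ must enter. Your sketch never invokes \SH, yet the whole statement lives in a context where \SH\ is needed even to have the simple description of internal crossed modules (Definition~\ref{defi:internal crossed modules}) that you propose to verify; the ternary term is the known obstruction from \cite{HVdL11}. You also silently need $[M,K_f]=0$ for your verifications, which is not a hypothesis but a fact requiring proof (it is Proposition~\ref{prop:any regular epimorphism of L-crossed module is a central extension with respect to Ab}).

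For contrast, the paper avoids constructing any explicit structure. It proves the single commutator identity $[M\rtimes L,K_{f\rtimes 1_L}]=0$ by the join decomposition of Proposition~\ref{Higgins properties}(6): the binary pieces $[L,K_f]$ and $[M,K_f]$ vanish by centrality and by Proposition~\ref{prop:any regular epimorphism of L-crossed module is a central extension with respect to Ab}, and the ternary piece vanishes by \SH. It then concludes that $f\rtimes 1_L$ is a central extension with respect to $\Ab(\A)$, hence the differential of an internal crossed module, and finally invokes the pointwise characterisation of central extensions in $\Grpd(\A)$ from \cite{BG02} to obtain a double groupoid whose normalisation is the desired crossed square. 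If you wanted to salvage your explicit approach, each of your ``routine'' verifications would, after internalisation through co-smash products, reproduce exactly this commutator calculus --- so the missing ideas are concretely: the replacement of products by co-smash products, and the \SH-based treatment of the ternary obstruction.
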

\begin{proof}
The first step is to prove that $[M\rtimes L,K_{f\rtimes 1_L}]=0$. We use the join decomposition formula of Proposition~\ref{Higgins properties} to see that
\begin{align*}
[M\rtimes L,K_{f\rtimes 1_L}]&=[M,K_{f\rtimes 1_L}]\vee [L,K_{f\rtimes 1_L}]\vee [M,L,K_{f\rtimes 1_L}]
\end{align*}
and we show that each component is trivial:
\begin{itemize}
 \item notice that $K_{f}=K_{f\rtimes 1_L}$ since $f$ is the pullback of $f\rtimes 1_L$;
 \item since $(f,1_L)$ is a central extension, we know that $[L,K_f]=0$;
 \item from Proposition~\ref{prop:any regular epimorphism of L-crossed module is a central extension with respect to Ab} it follows that $f$ is a central extension with respect to $\Ab(\A)$, and therefore $[M,K_f]=0$;
 \item since both $K_f$ and $M$ are normal subobjects of $M\rtimes L$, via~\cite[Section 4]{HVdL11} the \emph{Smith is Huq} condition implies that $[K_f,M,M\rtimes L]\leq [K_f,M]=0$, which in turn implies $[M,L,K_f]=0$ since this is a subobject of the previous one.
\end{itemize} 
Now consider the extension $f\rtimes 1_L$ (it is a regular epimorphism because $f$ is so): since $[M\rtimes L,K_{f\rtimes 1_L}]=0$ we deduce that $f\rtimes 1_L$ is a central extension with respect to $\Ab(\A)$ and therefore it is the differential of a crossed module.

We now use the fact that in $\Grpd(\A)$ the central extensions (with respect to $\Ab(\Grpd(\A))$) are computed pointwise, that is they are couples of central extensions in $\A$ (with respect to $\Ab(\A)$): this is shown in Proposition~$4.1$ of~\cite{BG02}. Since both $f\rtimes 1_L$ and $1_L$ are central with respect to $\Ab(\A)$, the lower square in the diagram
\[
\xymatrixcolsep{3pc}
\xymatrix{
M \ar[d]_-{k_d} \ar[r]^-{f} & M' \ar[d]^-{k_{d'}}\\
M\rtimes L \ar@<1ex>[d]^-{d} \ar@<-1ex>[d]_-{c} \ar[r]^-{f\rtimes 1_L} & M'\rtimes L \ar@<1ex>[d]^-{d'} \ar@<-1ex>[d]_-{c'}\\
L \ar[u]|-{e} \ar@{=}[r] & L \ar[u]|-{e'}
}
\]
is a central extension in $\Grpd(\A)$ (with respect to $\Ab(\Grpd(\A))$) and therefore it is the differential of an internal crossed module in $\Grpd(\A)$. This means that its denormalisation is a double groupoid and therefore the square we are interested in is an internal crossed square.
\end{proof}

We can now use this interpretation of central extensions of $L$-crossed modules in terms of crossed squares in order to prove the following result, of which we show the two implications in separate propositions.

\begin{theorem}\label{Thm:PerfectIFFuce}
\label{thm:perfect iff admits a universal central extension}
In a semi-abelian category that satisfies the \emph{Smith is Huq} condition, an $L$-crossed module is perfect if and only if it admits a universal central extension.
\end{theorem}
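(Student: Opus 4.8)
The plan is to prove the two implications separately, exactly as the authors announce. The forward direction (perfect $\Rightarrow$ admits a universal central extension) is where the non-abelian tensor product does the work, and the converse (admits a universal central extension $\Rightarrow$ perfect) is the easier categorical direction. The whole argument is an instance of the abstract theory already recalled in the excerpt, so the real content is to verify that the tensor construction supplies the required initial object.

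\emph{Forward direction.} Assume $(\partial\colon M\to L,\xi)$ is perfect. By Proposition~\ref{prop:delta_M is a regular epimorphism}, the morphism $\delta_M\colon L\otimes M\to M$ is a regular epimorphism, so $(\delta_M,1_L)$ is an extension of $L$-crossed modules; by Proposition~\ref{prop:(delta_M,1) is a central extension of L-crossed modules} it is in fact a \emph{central} extension. The claim to be proved is that this particular central extension is \emph{initial} in $\Centr_{(M\to L,\xi)}(\XMod_L(\A))$. So, given any other central extension $(g,1_L)\colon (N\to L,\zeta)\to (M\to L,\xi)$, I must produce a unique morphism of central extensions from $(\delta_M,1_L)$ to $(g,1_L)$. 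The natural strategy is functoriality of the tensor product: one applies the crossed-square / double-groupoid construction of Proposition~\ref{prop:delta_M is a regular epimorphism} to the central extension $(g,1_L)$, obtaining via Proposition~\ref{prop:central extensions of L-crossed modules induce crossed squares} a crossed square, and then uses the universal property of $L\otimes M$ inherited from the Brown--Loday universal property \cite{BL87,dMVdL19.3} to factor $\delta_M$ through $g$. Uniqueness should follow because $L\otimes M$ is built as a normalisation of a \emph{universal} quotient, so any two comparison maps agree on the generating data and hence everywhere; alternatively one invokes that $(\delta_M,1_L)$ is a regular epimorphism together with the general fact \cite[Theorem~3.5]{CVdL14} that morphisms out of a weakly universal central extension with perfect total object are automatically unique.

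\emph{Converse direction.} Suppose $(\partial\colon M\to L,\xi)$ admits a universal central extension $(u,1_L)\colon (U\to L,\theta)\to(M\to L,\xi)$. The standard categorical argument (cf.\ \cite{CVdL14}) is that a universal central extension must have a \emph{perfect} total object, and then one transfers perfectness down along $u$. Concretely, if $U$ were not perfect in the sense $[L,U]=U$, one could build two distinct morphisms from $(u,1_L)$ into a central extension---for instance using the trivial-extension reflection of~\ref{subsec:TrivialExtensionReflection}, whose non-triviality is measured exactly by $U/[L,U]$---contradicting initiality. Once $U$ is perfect, the fact that $u$ is a central (hence in particular, by Remark~\ref{Central via kernel has trivial action}, a morphism with trivially-acted kernel) regular epimorphism forces $M=u(U)=u([L,U])=[L,u(U)]=[L,M]$, using monotonicity and preservation of Higgins commutators under regular epimorphisms (Proposition~\ref{Higgins properties}(1)). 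Hence $(M\to L,\xi)$ is perfect.

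\emph{Main obstacle.} The delicate point is the \textbf{uniqueness of the comparison map} in the forward direction. Existence is essentially handed to us by the universal property of the tensor product packaged in the crossed-square construction; but to conclude that $(\delta_M,1_L)$ is genuinely initial (not merely weakly initial) one must rule out distinct factorisations. The cleanest route is to reduce to the already-established abstract theorem \cite[Theorem~3.5]{CVdL14}, which is stated for the pointed semi-abelian setting---and this is precisely why the authors later reinterpret $\XMod_L(\A)$ inside the genuinely pointed semi-abelian category $\XMod(\A)$ (Section~\ref{Section Pointed}). In the present ad-hoc section one instead argues uniqueness directly from the construction, exploiting that the defining quotient in Figure~\ref{Figure Tensor} is universal among double groupoids, so any two morphisms of central extensions agree after composing with the jointly (extremal-)epimorphic structure maps. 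I expect that translating the Brown--Loday universal property through the equivalence $\Grpd(\A)\simeq\XMod(\A)$ to secure this uniqueness is the step requiring the most care.
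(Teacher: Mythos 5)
Your proposal follows the paper's own two-proposition route essentially verbatim: the forward direction is exactly the paper's argument (Propositions~\ref{prop:delta_M is a regular epimorphism}, \ref{prop:(delta_M,1) is a central extension of L-crossed modules} and~\ref{prop:central extensions of L-crossed modules induce crossed squares}, with both existence \emph{and} uniqueness of the comparison $\phi\colon L\otimes M\to\overline{M}$ coming from the universal property of the tensor product of~\cite{dMVdL19.3}, which is the first route you name---your fallback via \cite[Theorem~3.5]{CVdL14} would indeed be unavailable here, as $\XMod_L(\A)$ is only quasi-pointed and that theorem moreover needs enough projectives, a point you flag yourself). Your converse likewise matches the paper's mechanism, which implements your ``two distinct morphisms'' concretely: with $A=U/[L,U]$ abelian (Lemma~\ref{lemma:M over the commutator is abelian}), the product crossed module $(A\times M\to L,\xi_{A\times M})$ yields a central extension $(\pi_M,1_L)$ of the base, morphisms of extensions into it from $(u,1_L)$ correspond to $g\colon U\to A$ with $g([L,U])=0$, so both the quotient $q$ and $0$ qualify and initiality forces $q=0$, i.e.\ $[L,U]=U$, after which perfectness of the base descends along the regular epimorphism $u$ exactly as you say (via item~(2), not~(1), of Proposition~\ref{Higgins properties}).
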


\begin{proposition}
Every perfect $L$-crossed module has a universal central extension.
\end{proposition}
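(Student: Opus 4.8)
The plan is to show that the morphism of $L$-crossed modules $(\delta_M,1_L)\colon(L\otimes M\to L)\to(\partial\colon{M\to L},\xi)$ arising from the crossed square~\eqref{DiagTensor} is the universal central extension, that is, an initial object among the central extensions of $(\partial,\xi)$ in the sense of Definition~\ref{Ad-hoc centrality}. That this is an admissible object is already at hand: Proposition~\ref{prop:delta_M is a regular epimorphism} turns perfectness of $(\partial,\xi)$ into the statement that $\delta_M$ is a regular epimorphism, whence $(\delta_M,1_L)$ is a regular epimorphism of $L$-crossed modules by Lemma~\ref{lemma:regular epimorphisms in XMOD(A)}, and Proposition~\ref{prop:(delta_M,1) is a central extension of L-crossed modules} shows it is central. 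What remains is to prove that for every central extension $(g,1_L)\colon(N\to L)\to(M\to L)$ there is exactly one morphism $(h,1_L)$ of $L$-crossed modules over $(M\to L)$ with $(g,1_L)\comp(h,1_L)=(\delta_M,1_L)$.

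For the existence of the comparison morphism I would exploit the universal property of the non-abelian tensor product. By Proposition~\ref{prop:central extensions of L-crossed modules induce crossed squares} the central extension $(g,1_L)$ underlies a crossed square with right column $\partial$ and identity bottom row; denormalising it produces a double groupoid $D_{g}$ whose right horizontal reflexive graph is $M\rtimes L\rightrightarrows L$. Recall from~\cite{dMVdL19.3} (Figures~\ref{Figure Induced Double Groupoid} and~\ref{Figure Tensor}) that $\widetilde{P}$ is obtained from the pushout $P$ of $e$ and $\Delta_L$ by the \emph{universal} passage from a reflexive-graph datum to a double groupoid. Since $D_{g}$ is already a double groupoid, a morphism $\widetilde{P}\to D_{g}$ is the same as a morphism $P\to D_{g}$, and the latter is determined by the pushout property of $P$ from the canonical data on its two legs (the identity on the $M\rtimes L$-part and the diagonal comparison on $L\times L$). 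Normalising the resulting morphism of double groupoids yields $h\colon L\otimes M\to N$, and compatibility with the horizontal structure forces $g\comp h=\delta_M$; thus $(h,1_L)$ is a morphism over $(M\to L)$.

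For uniqueness I would argue that $(L\otimes M\to L)$ is again a \emph{perfect} $L$-crossed module---which follows by applying Proposition~\ref{prop:delta_M is a regular epimorphism} to it, i.e.\ checking $[L,L\otimes M]=L\otimes M$---and then invoke the classical principle that a perfect object admits at most one morphism into any central extension over a fixed base. Concretely, two comparison morphisms $h_{1},h_{2}$ agree after composing with $g$, so they differ by a morphism into the kernel $K_{g}$; by Proposition~\ref{prop:any regular epimorphism of L-crossed module is a central extension with respect to Ab} together with Remark~\ref{Central via kernel has trivial action} this kernel is abelian and carries the trivial $L$-action, so a morphism from the perfect $L$-crossed module $L\otimes M$ into it factors through the coinvariants of Construction~\ref{ConstructionTrivialAction}, which vanish; hence $h_{1}=h_{2}$. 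Uniqueness can alternatively be obtained directly, since both $P$ and $\widetilde{P}$ solve universal problems, so the morphism $\widetilde{P}\to D_{g}$ and its normalisation are forced.

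I expect the main obstacle to lie in the precise handling of the tensor product's universal property: one must check that the canonical data on the two legs of $P$ are genuinely compatible, that the induced morphism of double groupoids is the identity on the columns required to remain fixed, and---most delicately---that its normalisation is a morphism of $L$-crossed modules \emph{over} the base $(M\to L)$ rather than merely of the underlying objects. Verifying perfectness of $L\otimes M$, needed for the conceptual uniqueness argument, is the secondary technical point and is most cleanly dispatched by a further appeal to Proposition~\ref{prop:delta_M is a regular epimorphism}.
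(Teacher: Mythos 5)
Your architecture coincides with the paper's for the positive part: admissibility of $(\delta_M,1_L)$ via Propositions~\ref{prop:delta_M is a regular epimorphism} and~\ref{prop:(delta_M,1) is a central extension of L-crossed modules}, then Proposition~\ref{prop:central extensions of L-crossed modules induce crossed squares} to turn a competing central extension $(g,1_L)$ into a crossed square, and the universal property of the non-abelian tensor product to produce the comparison $h$ with $g\comp h=\delta_M$. The paper, however, extracts \emph{both} existence and uniqueness from that single universal property: the tensor product comes with a \emph{unique} structure-compatible morphism into any crossed square with the given periphery, and any morphism of extensions over $(\partial\colon M\to L,\xi)$ automatically fits that format (the compatibility with the left legs is forced, since the differential of $N$ is $\partial\comp g$, so composing with $h$ gives $\partial\comp\delta_M=\partial^{\otimes}$). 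This is exactly the route you relegate to a closing remark ("uniqueness can alternatively be obtained directly"); promoting that sentence to the argument reproduces the paper's proof.

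Your preferred uniqueness detour, by contrast, contains a genuine gap. The pivotal claim that $L\otimes M$ is itself perfect, i.e.\ $[L,L\otimes M]=L\otimes M$, is nowhere established: applying Proposition~\ref{prop:delta_M is a regular epimorphism} to $(\partial^{\otimes}\colon L\otimes M\to L)$ merely says that $\delta_{L\otimes M}$ is a regular epimorphism \emph{if and only if} $L\otimes M$ is perfect---it restates the condition rather than checking it, so your justification is vacuous. Worse, in the paper perfectness of the domain of a universal central extension is a \emph{consequence} of universality (it is the content of the companion proposition), so using it as an input to prove universality risks outright circularity. A second, smaller issue: "they differ by a morphism into the kernel $K_g$" is not literal in a semi-abelian category, where morphisms cannot be subtracted; it can be repaired by the standard kernel-pair manoeuvre---since $[K_g,N]=0$ by Proposition~\ref{prop:any regular epimorphism of L-crossed module is a central extension with respect to Ab}, the kernel pair of $g$ decomposes as $K_g\times N$, and $\langle h_1,h_2\rangle$ then yields an equivariant difference $d\colon L\otimes M\to K_g$ into an abelian object with trivial $L$-action (Remark~\ref{Central via kernel has trivial action}), whence $d([L,L\otimes M])=0$---but even after this repair the argument still hinges on the unproven perfectness of $L\otimes M$. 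So as written the uniqueness half does not go through; the fix is simply to use the uniqueness clause of the tensor product's universal property, as the paper does.
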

\begin{proof}
Let $(\partial\colon{M\to L},\xi)$ be a perfect $L$-crossed module and consider the crossed square
\[
\xymatrix{
L\otimes M \ar[d]_-{\partial^{\otimes}} \ar[r]^-{\delta_M} & \ar[d]^-{\partial} M\\
L \ar@{=}[r]& L.
}
\]
By Proposition~\ref{prop:(delta_M,1) is a central extension of L-crossed modules}, $(\delta_M,1_L)$ is a central extension of $L$-crossed modules. Now we want to show that this central extension is universal---that is, it is initial among all central extensions of $(\partial\colon{M\to L},\xi)$. So consider another central extension
\[
(\overline{M}\xrightarrow{\overline{\partial}}L,\overline{\xi})\xrightarrow{(f,1_L)}(M\xlongrightarrow{\partial}L,\xi).
\]
Due to Proposition~\ref{prop:central extensions of L-crossed modules induce crossed squares} we know that also 
\[
\xymatrix{
\overline{M} \ar[d]_-{\overline{\partial}} \ar[r]^-{f} & \ar[d]^-{\partial} M\\
L \ar@{=}[r]& L
}
\]
is a crossed square. Now it suffices to use the universal property of the non-abelian tensor product (see~\cite{dMVdL19.3}) to conclude that there exists a unique morphism $\phi\colon {L\otimes M\to \overline{M}}$ such that the diagram 
\[
\xymatrix{
L\otimes M \ar@{.>}[rd]|-{\phi} \ar@/_1pc/[rdd]_-{\partial^{\otimes}} \ar@/^1pc/[rrd]^-{\delta_M}\\
& \overline{M} \ar[d]_-{\overline{\partial}} \ar[r]^-{f} & \ar[d]^-{\partial} M\\
& L \ar@{=}[r]& L
}
\]
commutes and the actions are respected. This implies that $(\delta_M,1_L)$ is initial as a central extension of $L$-crossed modules over $(\partial\colon{M\to L},\xi)$.
\end{proof}

\begin{proposition}
In a universal central extension of $L$-crossed modules
\[
(M\xrightarrow{\partial}L,\xi)\xlongrightarrow{(f,1_L)}(M'\xrightarrow{\partial'}L,\xi')
\]
both the domain and the codomain are perfect objects. In particular, any object that admits a universal central extension is perfect.
\end{proposition}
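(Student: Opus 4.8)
The plan is to treat the two assertions in turn: perfectness of the domain $(M\xrightarrow{\partial}L,\xi)$ carries all the weight and will be extracted from the \emph{uniqueness} half of the universal property, after which perfectness of the codomain follows formally from the fact that $f$ is a regular epimorphism.

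For the domain I argue by contradiction. Suppose $[L,M]\neq M$, so that the coinvariants object $A\coloneqq M/[L,M]$ is a nonzero abelian object (Lemma~\ref{lemma:M over the commutator is abelian}) carrying the trivial $L$-action, the quotient $q\colon M\to A$ being the ($L$-invariant) kernel part of the unit of the coinvariants reflection (Proposition~\ref{prop:higgins=coinvariance} and Construction~\ref{ConstructionTrivialAction}). The idea, borrowed from the classical group case where one compares $M$ with $M\times M^{\mathrm{ab}}$, is to manufacture a central extension of $(M'\to L)$ that receives two distinct maps from the universal one. Concretely I would equip $\tilde M\coloneqq M\times A$ with the differential $\partial\comp\pi_M\colon\tilde M\to L$ and with the diagonal action ($\xi$ on $M$, trivial on $A$), and set $g\coloneqq f\comp\pi_M\colon\tilde M\to M'$. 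Using that $A$ is abelian with trivial action, one verifies the precrossed-module and Peiffer conditions, so that $(\tilde M\xrightarrow{\partial\pi_M}L)$ is an $L$-crossed module; moreover $(g,1_L)$ is a regular epimorphism whose kernel is $K_f\times A$, on which $L$ acts trivially (since $[L,K_f]=0$ by centrality of $(f,1_L)$ and $A$ is trivial). Hence $(g,1_L)$ is a central extension of $(M'\to L)$ by Definition~\ref{Ad-hoc centrality}.

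I would then exhibit two morphisms of central extensions from $(M\to L,f)$ to $(\tilde M\to L,g)$, namely $\alpha\coloneqq\langle 1_M,0\rangle$ and $\beta\coloneqq\langle 1_M,q\rangle$. Both are compatible with the differentials and satisfy $g\comp\alpha=f=g\comp\beta$, so both live over $(M'\to L)$; the only delicate point is the $L$-equivariance of $\beta$, which holds precisely because $q$ is $L$-invariant (the $A$-component of $l\cdot\beta(m)$ is $q(m)$, matching $q(l\cdot m)$). Since $A\neq0$ forces $q\neq0$, we get $\alpha\neq\beta$, contradicting the initiality of $(M\to L,f)$ in $\Centr_{(M'\to L)}(\XMod_L(\A))$. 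Therefore $[L,M]=M$, i.e.\ the domain is perfect.

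Perfectness of the codomain is then immediate. As $(f,1_L)$ is an extension of $L$-crossed modules, $f$ is a regular epimorphism in $\A$, hence so is $f\rtimes 1_L\colon M\rtimes L\to M'\rtimes L$; it sends the subobjects $L$ and $M$ of $M\rtimes L$ onto $L$ and $M'$. Item~(2) of Proposition~\ref{Higgins properties} then gives $[L,M']=(f\rtimes 1_L)[L,M]=(f\rtimes 1_L)(M)=M'$, so $(M'\to L,\xi')$ is perfect. The final ``in particular'' follows at once: an object admitting a universal central extension is the codomain of one, hence perfect. The main obstacle in all of this is the internal bookkeeping concealed in the ``one verifies'' step---checking the crossed-module axioms for the twisted product $\tilde M$ and the equivariance of $\beta$ without recourse to elements---which ultimately rests on the abelianness and triviality of the coinvariants $A$ together with the join-decomposition formula of Proposition~\ref{Higgins properties}.
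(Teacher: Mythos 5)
Your proposal is correct and takes essentially the same approach as the paper: both proofs exploit uniqueness in the universal property against the coinvariants object $A=M/[L,M]$ (abelian by Lemma~\ref{lemma:M over the commutator is abelian}, carrying the trivial $L$-action by Proposition~\ref{prop:higgins=coinvariance}), playing the quotient $q$ off against the zero morphism, and both deduce perfectness of the codomain from that of the domain, your explicit appeal to item~(2) of Proposition~\ref{Higgins properties} being exactly the justification of the paper's ``quotient of a perfect object'' remark. The only packaging difference is that the paper builds the auxiliary central extension as $(\pi_{M'},1_L)\colon A\times M'\to M'$ for an \emph{arbitrary} abelian $A$ (central for free, kernel $A$) and characterises the unique comparison morphism $\langle g,f\rangle$ by the condition $g([L,M])=0$ before specialising $A=M/[L,M]$, whereas you build $(f\comp \pi_M,1_L)\colon M\times A\to M'$ (whose centrality additionally uses $[L,K_f]=0$, available since $(f,1_L)$ is central) and exhibit the two distinct morphisms $\langle 1_M,0\rangle$ and $\langle 1_M,q\rangle$ directly, reaching a contradiction---your equivariance check for $\langle 1_M,q\rangle$ is precisely the paper's condition $g([L,M])=0$ applied to $g=q$.
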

\begin{proof}
Consider an $L$-crossed module $(\partial'\colon {M'\to L},\xi')$ and an abelian object $A$. Since $A$ is abelian, $(0\colon {A\to L},\tau^L_A)$ is an $L$-crossed module. We can construct the crossed module $(\partial'\comp\pi_{M'}\colon A\times M'\to L,\xi_{A\times M'})$
where the action $\xi_{A\times M'}$ is induced by the universal property of the product as shown in the diagram
\begin{equation}
\label{diag:definition of "product action"}
\vcenter{
\xymatrix@R=1em{
& L\flat(A\times M') \ar[dl]_-{1_L\flat\pi_A} \ar[dr]^-{1_L\flat\pi_{M'}} \ar@{.>}[dd]|-{\xi_{A\times M'}}\\
L\flat A \ar[dd]_-{\tau^L_A} && L\flat M' \ar[dd]^-{\xi'}\\
& A\times M' \ar[dl]_-{\pi_A} \ar[dr]^-{\pi_{M'}}\\
A && M'.
}
}
\end{equation}
In order to see that this is an $L$-action it suffices to use the naturality diagrams for $\eta$ and $\mu$ and the fact that both $\tau^L_A$ and $\xi'$ are $L$-actions. Similarly, to see that this gives rise to an $L$-crossed module it suffices to use that both $(\partial'\colon {M'\to L},\xi')$ and $(0\colon {A\to L},\tau^L_A)$ are so.

Now consider the triangle
\[
\xymatrix{
A\times M' \ar[rd]_-{\partial'\comp\pi_{M'}} \ar[rr]^-{\pi_{M'}} && M'. \ar[ld]^-{\partial'}\\
& L
}
\]
This is a morphism of $L$-crossed modules (due to~\eqref{diag:definition of "product action"}) which is a regular epimorphism: we want to follow Remark~\ref{Central via kernel has trivial action} and show that it is a central extension by proving that its kernel has a trivial $L$-action. But its kernel is simply $(0\colon {A\to L},\tau^L_A)$: to see this is suffices to use the description of kernels in $\XMod_L(\A)$, to notice that $A=K_{\pi_{M'}}$ in the base category $\A$ and to show the commutativity of the square on the left in the diagram
\[
\xymatrixcolsep{3pc}
\xymatrix{
L\flat A \ar[d]_-{\tau^L_A} \ar[r]^-{1_L\flat \langle 1_A,0\rangle} & L\flat(A\times M') \ar[d]_-{\xi_{A\times M'}} \ar[r]^-{1_L\flat \pi_{M'}} & L\flat M' \ar[d]_-{\xi'}\\
A \ar[r]_-{\langle 1_A,0\rangle} & A\times M' \ar[r]_-{\pi_{M'}} & M'.
}
\]
We conclude that, since its action is trivial, $(\pi_{M'},1_L)$ is a central extension.

Now suppose that 
\[
(M\xrightarrow{\partial}L,\xi)\xlongrightarrow{(f,1_L)}(M'\xrightarrow{\partial'}L,\xi')
\]
is a universal central extension of $L$-crossed modules. By definition, we have a unique morphism
\[
\xymatrix@!0@R=4em@C=8em{
(M\xrightarrow{\partial}L,\xi) \ar@{.>}[rr]^-{(\langle g,f\rangle,1_L)} \ar[rd]_-{(f,1_L)} && (A\times M'\xrightarrow{\partial'\circ\pi_{M'}}L,\xi_{A\times M'}) \ar[ld]^-{(\pi_{M'},1_L)}\\
& (M'\xrightarrow{\partial'}L,\xi') 
}
\]
from this extension to the one just defined. Let us focus on this induced morphism: what can we say about $g\colon M\to A$? It is the unique morphism that makes $(\langle g,f\rangle,1_L)$ a morphism of $L$-crossed modules, that is such that the following squares commute.
\begin{align*}
\xymatrixcolsep{3pc}
\xymatrix{
M \ar[d]_-{\langle g,f\rangle} \ar[r]^-{\partial} & L \ar@{=}[d]\\
A\times M' \ar[r]_-{\partial'\comp\pi_{M'}} & L
}
&&
\xymatrixcolsep{3pc}
\xymatrix{
L\flat M \ar[d]_-{\xi} \ar[r]^-{1_L\flat \langle g,f\rangle} & L\flat (A\times M') \ar[d]^-{\xi_{A\times M'}}\\
M \ar[r]_-{\langle g,f\rangle} & A\times M'
}
\end{align*}
The first one does so for each choice of $g$, whereas the second one will if and only if
\begin{equation}
\label{diag:g is a morphism of L-actions}
\vcenter{
\xymatrix{
L\flat M \ar[d]_-{\xi} \ar[r]^-{1_L\flat g} & L\flat A \ar[d]^-{\tau^L_A}\\
M \ar[r]_-{g} & A
}
}
\end{equation}
commutes. Now, since 
\[
\xymatrix{
0 \ar[r] & L\diamond M \ar[r]^-{i_{L,M}} & L\flat M \ar@<.5ex>[r]^-{\tau^L_M} & M \ar@<.5ex>[l]^-{\eta^L_M} \ar[r] & 0
}
\]
is a split short exact sequence, we have that $\binom{\eta^L_M}{\tau^L_M}\colon (L\diamond M)+M\to L\flat M$ is an epimorphism. Hence the commutativity of~\eqref{diag:g is a morphism of L-actions} is equivalent to the commutativity of the same diagram composed with this epimorphism. This amounts to having that $g\comp \xi\comp i_{L,M}=0$, which is another way to say that $g([L,M])=0$. The morphism~$g$ is unique in $\Hom(M,A)$ with this property.

Now fix $A={M}/{[L,M]}$, which is an abelian object by Lemma~\ref{lemma:M over the commutator is abelian}. We are going to deduce that $[L,M]=M$. Notice that both the quotient $g=q\colon M\twoheadrightarrow {M}/{[L,M]}$ and the zero morphism $g=0$ satisfy the condition $g([L,M])=0$. We may thus conclude that $q=0$, so that $[L,M]=M$. This means that $(\partial\colon{M\to L},\xi)$ is perfect and consequently $(\partial'\colon {M'\to L},\xi')$ is perfect too, as a quotient of a perfect object.
\end{proof}

\begin{proof}[Proof of Theorem~\ref{Thm:PerfectIFFuce}]
Combine the two previous propositions.
\end{proof}

\section{Galois theory interpretation, quasi-pointed setting}\label{Section Quasipointed}

The aim here is to use the coinvariants reflector to construct a Birkhoff subcategory of $\XMod_L(\A)$ with respect to which we find the ``right'' class of central extensions of $L$-crossed modules. In~\cite{Eda19}, the author solved this problem in the case of $\A$ being the category of Lie algebras.

In the current section, we work in $\XMod_L(\A)$, which forces us to take into account the lack of a zero object---see \ref{XMod_L}. Here and there we may simplify the situation by making constructions in the more benign context of $\XMod(\A)$. In the next section, we abandon $\XMod_L(\A)$ altogether in favour of $\XMod(\A)$; as we shall see, this simplifies the situation a lot.

\begin{definition}\label{Def:AAXMod}
Given an internal crossed module $(\partial\colon{M\to L},\xi)$, we write $K_{\partial}$ for the kernel of $\partial$, and let $[L,M]\lhd M\rtimes_\xi L$ be the commutator induced by $\xi$ as in Proposition~\ref{prop:higgins=coinvariance}.

We say that $(\partial\colon{M\to L},\xi)$ is \emph{action-acyclic} when $K_{\partial}\wedge [L,M]=0$. Here the intersection is the subobject of $M$ defined via the pullback
\[
\xymatrix{
K_{\partial}\wedge [L,M] \pullback \ar@{.>}[dr]|-{i} \ar[r] \ar[d] & [L,M] \ar[d]^-{k_{(c_s\circ k_p)}}\\
K_{\partial} \ar[r]_-{k_{\partial}} & M.
}
\]
The idea behind this definition is that the action has no cycles (elements of $K_{\partial}$) in its image. 

We will denote $\TXMod_L(\A)$ the full subcategory of $\XMod_L(\A)$ whose objects are the action-acyclic crossed modules.
\end{definition}

Notice that since $i$ is the diagonal of the pullback of a kernel along another kernel, it is itself a kernel. Furthermore, the intersection $K_{\partial}\wedge [L,M]$ is abelian, because $K_\partial$ is the kernel of a central extension as in Example~\ref{Ex:CentralExtInXMod}. This allows us to use the following lemma:

\begin{lemma}\label{Lem:AbelianObject}
$A\in\Ab(\A)$ if and only if $(0\colon{A\to 0},\tau^0_A)$ is an internal crossed module.
\end{lemma}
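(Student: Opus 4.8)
The plan is to prove both directions of the biconditional by unwinding what it means for $(0\colon{A\to 0},\tau^0_A)$ to be an internal crossed module, and comparing this with the definition of an abelian object. The key simplification is that the base object is $0$, so several ingredients of Definition~\ref{defi:internal crossed modules} degenerate. In particular, the functor $0\flat(-)$ is computed from the split extension defining it, and since $0+A\cong A$ with $\binom{1_0}{0}\colon 0+A\to 0$ being the zero map, one sees that $0\flat A\cong A$ and the action $\tau^0_A\colon 0\flat A\to A$ is forced to be (the isomorphism identified with) the identity; this is the unique $0$-action on $A$. So the \emph{action} part of the data carries no information, and being a crossed module reduces entirely to the two structural conditions.

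Next I would examine the Peiffer condition $(*_1)$ and the precrossed-module condition $(*_2)$ in this degenerate setting. The precrossed condition $(*_2)$ asserts commutativity of a square whose right-hand vertical is $\chi_0\colon 0\flat 0\to 0$ and whose bottom is $\partial=0\colon A\to 0$; since the codomain is the zero object $0$, this square commutes trivially and imposes no constraint. The real content is therefore the Peiffer condition $(*_1)$, which in this case becomes the statement that the composite
\[
\xymatrix{
A\flat A \ar[r]^-{\partial\flat 1_A} & 0\flat A \ar[r]^-{\tau^0_A} & A
}
\]
equals $\chi_A\colon A\flat A\to A$. Since $\partial=0$, the morphism $\partial\flat 1_A$ factors through $0\flat A\cong A$ in the degenerate way described above, and I expect a short diagram chase to show that $\tau^0_A\comp(\partial\flat 1_A)$ equals the composite of $A\flat A\to A\diamond A$-type data that computes the commutator $[A,A]$. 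Concretely, the Peiffer condition should become equivalent to the vanishing of the internal multiplication $\chi_A$ on $A\flat A$ when restricted appropriately, which is precisely the condition that $[A,A]=0$.

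The final step is to match this with the characterisation of abelianness recalled in~\ref{higgins commutator and central extensions}, namely that $A\in\Ab(\A)$ if and only if $[A,A]=0$. Thus I would argue: the only candidate action is $\tau^0_A$; the precrossed condition is automatic; and the Peiffer condition holds if and only if $[A,A]=0$, which holds if and only if $A$ is abelian. The main obstacle I anticipate is making the reduction of the Peiffer condition fully rigorous, i.e.\ correctly identifying $0\flat A$ with $A$ and tracking how $\partial\flat 1_A$ and $\tau^0_A$ interact with $\chi_A$ so that the Peiffer square really does encode exactly the Higgins self-commutator $[A,A]$; one must be careful that no hidden constraint is lost or spuriously introduced when the base degenerates to $0$. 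Everything else is a routine identification of the unique $0$-action and the trivial commutativity of the precrossed square over the zero codomain.
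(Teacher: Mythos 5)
Your proof is correct, but it takes a genuinely different route from the paper's. The paper disposes of the lemma in one line via the equivalence $\XMod(\A)\simeq\Grpd(\A)$: a crossed-module structure on $0\colon A\to 0$ is the same as an internal groupoid structure on the reflexive graph $A\rightrightarrows 0$, i.e.\ an internal monoid structure on $A$ (composition being a morphism $A\times A\to A$), and in a semi-abelian---hence unital Mal'tsev---category internal monoids are exactly internal abelian groups. You instead unwind Definition~\ref{defi:internal crossed modules} directly, and your reduction is sound: $\eta^0_A\colon A\to 0\flat A$ is invertible, so the unit axiom forces the action to be $\tau^0_A=(\eta^0_A)^{-1}$; the precrossed condition $(*_2)$ is vacuous because its codomain is the zero object; and $(*_1)$ becomes $\chi_A=\tau^0_A\comp(\partial\flat 1_A)$, which is the statement that the conjugation action of $A$ on itself is trivial. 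The ``short diagram chase'' you defer goes exactly as you anticipate: test both sides against the jointly epimorphic pair $(i_{A,A},\eta^A_A)$ coming from the split exact sequence $0\to A\diamond A\to A\flat A\rightleftarrows A\to 0$; along $\eta^A_A$ both sides restrict to $1_A$ (unit axiom plus naturality of $\eta$), while on $A\diamond A$ the right-hand side vanishes since $(\partial\flat 1_A)\comp i_{A,A}=i_{0,A}\comp(\partial\diamond 1_A)$ factors through $0\diamond A=0$, and the image of $\chi_A\comp i_{A,A}$ is precisely $[A,A]$---the very computation the paper performs in the proof of Lemma~\ref{lemma:M over the commutator is abelian}. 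Hence $(*_1)$ holds if and only if $[A,A]=0$, i.e.\ $A\in\Ab(\A)$, as recalled in~\ref{higgins commutator and central extensions}. Comparing the two: your argument is elementary and self-contained within the \SH-simplified axioms (the setting in which the lemma is actually used), and it makes visible that the sole content of the crossed-module conditions over base $0$ is the vanishing of the Higgins self-commutator; the paper's argument is shorter, avoids all computation with $\flat$ and $\diamond$, and does not lean on the simplified form of the axioms, resting instead on the classical Eckmann--Hilton-type fact about internal monoids in Mal'tsev categories.
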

\begin{proof}
An internal groupoid structure on the reflexive graph $\xymatrix@1{A \ar@<1ex>[r] \ar@<-1ex>[r] & 0 \ar[l]}$ is the same thing as internal monoid structure on $A$, which in the current context amounts to an internal abelian group structure.
\end{proof}

\begin{construction}
\label{constr: free action-acyclic L-crossed module functor}
We define the functor $F\colon\XMod_L(\A)\to\TXMod_L(\A)$, left adjoint to the inclusion functor $J$. 

Given an internal $L$-crossed module $(\partial\colon{M\to L},\xi)$, the sub--crossed module $(0\colon (K_{\partial}\wedge[L,M])\to 0,\tau^0_{K_{\partial}\wedge[L,M]})$ is obtained via Lemma~\ref{Lem:AbelianObject}. The inclusion between the two crossed modules is given by the morphism $(i,0)\in\XMod(\A)$:
\begin{align*}
\vcenter{\xymatrix{
0\flat(K_{\partial}\wedge[L,M]) \ar[d]_-{\tau^0_{K_{\partial}\wedge[L,M]}} \ar[r]^-{0\flat i} & L\flat M \ar[d]^-{\xi}\\
(K_{\partial}\wedge[L,M]) \ar[r]_-{i} & M
}}
&&
\vcenter{\xymatrix{
(K_{\partial}\wedge[L,M]) \ar[r]^-{i} \ar[d]_-{0} & M \ar[d]^-{\partial}\\
0 \ar[r]_-{0} & L.
}}
\end{align*}

The image of $(\partial\colon{M\to L},\xi)$ through $F$ is given by the cokernel in $\XMod(\A)$ of the previous inclusion, that is
\[
(\tfrac{M}{K\wedge [L,M]}\xlongrightarrow{\overline{\partial}}L,\overline{\xi})
\]
where the action $\overline{\xi}$ is obtained as follows: first we pass to the category of points and take the cokernel there
\begin{equation}
\label{diag:cokernel of points}
\vcenter{
\xymatrix{
0 \ar[r] & K_{\partial}\wedge[L,M] \ar[d]_-{i} \ar@{=}[r] & K_{\partial}\wedge[L,M] \ar[d]_-{k_p\circ i} \ar@<.5ex>[r]^-{0} & 0 \ar@<.5ex>[l]^-{0} \ar[r] \ar[d]^-{0} & 0\\
0 \ar[r] & M \ar@{.>}[d] \ar[r]^-{k_p} & X \ar[d]|-{c_{(k_p\circ i)}} \ar@<.5ex>[r]^-{p} & L \ar@{=}[d] \ar@<.5ex>[l]^-{s} \ar[r] & 0\\
0 \ar[r] & K_{\overline{p}} \ar[r]_-{k_{\overline{p}}} & C_{(k_p\circ i)} \ar@<.5ex>[r]^-{\overline{p}} & L \ar@<.5ex>[l]^-{\overline{s}} \ar[r] & 0,
}
}
\end{equation}
then we go back to the associated action $\overline{\xi}$ given by the diagram
\[
\xymatrixcolsep{4pc}
\xymatrix{
L\flat K_{\overline{p}} \ar@{.>}[d]_-{\overline{\xi}} \ar[r]^-{k_{L,k_{\overline{p}}}} & L+K_{\overline{p}} \ar[d]^-{\binom{\overline{s}}{k_{\overline{p}}}} \ar[r]^-{\binom{1_L}{0}} & L \ar@{=}[d]\\
K_{\overline{p}} \ar[r]_-{k_{\overline{p}}} & C_{(k_p\circ i)} \ar[r]_-{\overline{p}} & L.
}
\]
The first thing we need, is to prove that $K_{\overline{p}}\cong C_i$; this follows easily from the fact that the dotted arrow in \eqref{diag:cokernel of points} is a regular epimorphism whose kernel is $i$, all because the lower left square in \eqref{diag:cokernel of points} is a pullback.

At this point one would expect that the action $\overline{\xi}$ just defined makes the diagram 
\[
\xymatrixcolsep{4pc}
\xymatrix{
0\flat(K_{\partial}\wedge[L,M]) \ar[d]_-{\tau^0_{K_{\partial}\wedge[L,M]}} \ar[r]^-{0\flat i} & L\flat M \ar@{}[rd]|-{(*)} \ar[d]^-{\xi} \ar[r]^-{1_L\flat c_i} & L\flat\frac{M}{K_{\partial}\wedge [L,M]} \ar[d]^-{\overline{\xi}}\\
(K_{\partial}\wedge[L,M]) \ar[r]_-{i} & M \ar[r]_-{c_i} & \frac{M}{K_{\partial}\wedge [L,M]}
}
\]
commute and indeed this can be shown by using the diagram in Figure~\ref{OvXi is action}. 
\begin{figure}
\[
\xymatrix@!0@=4.5em{
L\flat M
\ar[dd]_-{\xi}
\ar[rr]
\ar[rd]|-{1_L\flat c_i}
&&
L+M
\ar[dd]|(.50){\hole}_(0.25){\binom{s}{k_p}}
\ar[rr]^-{\binom{1_L}{0}}
\ar[rd]|-{1_L+c_i}
&&
L
\ar@{=}[dd]|(.50){\hole}
\ar@{=}[rd]
\\
&
L\flat \frac{M}{K_{\partial}\wedge [L,M]}
\ar[dd]_(0.33){\overline{\xi}}
\ar[rr]
&&
L+\frac{M}{K_{\partial}\wedge [L,M]}
\ar[dd]_(0.25){\binom{\overline{s}}{k_{\overline{p}}}}
\ar[rr]^-{\binom{1_L}{0}}
&&
L
\ar@{=}[dd]
\\
M
\ar[rr]|(.50){\hole}_(0.33){k_p}
\ar[rd]|-{c_i}
&&
X
\ar[rd]|-{c_{(k_p\circ i)}}
\ar[rr]|(.50){\hole}_(0.66){p}
&&
L
\ar@{=}[rd]
\\
&
\frac{M}{K_{\partial}\wedge [L,M]}
\ar[rr]_-{k_{\overline{p}}}
&&
C_{(k_p\circ i)}
\ar[rr]_-{\overline{p}}
&&
L
}
\]
\caption{$\overline{\xi}$ is an action.}\label{OvXi is action}
\end{figure}
The morphism $\overline{\partial}$ is induced via the universal property of the cokernel of $i$:
\[
\xymatrixcolsep{3pc}
\xymatrix{
(K_{\partial}\wedge[L,M]) \ar[r]^-{i} \ar[d]_-{0} & M \ar@{}[rd]|-{(**)} \ar[d]^-{\partial} \ar[r]^-{c_i}& \frac{M}{K_{\partial}\wedge [L,M]} \ar@{.>}[d]^-{\overline{\partial}}\\
0 \ar[r]_-{0} & L \ar@{=}[r] & L.
}
\]
The fact that it is an internal crossed module is easy to show: it suffices to use that $(\partial\colon{M\to L},\xi)$ is an internal crossed module and that both $q\flat q$ and $1_L\flat q$ are (regular) epimorphisms (by Lemma~5.11 in \cite{MM10}). From the commutativity of $(*)$ and $(**)$ we conclude that 
\[
\xymatrix{
(M\xrightarrow{\partial}L,\xi) \ar[rr]^-{(c_i,1_L)} && (\frac{M}{K_{\partial}\wedge [L,M]}\xlongrightarrow{\overline{\partial}}L,\overline{\xi})
}
\]
is a morphism of $L$-crossed modules. Furthermore it can easily be checked that this morphism has the universal property of the cokernel of $(i,0)$ in $\XMod(\A)$.
\end{construction}

\begin{lemma}
The category $\TXMod_L(\A)$ is Birkhoff in $\XMod_L(\A)$. 
\end{lemma}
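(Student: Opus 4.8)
Reflectivity and fullness are already in hand: Construction~\ref{constr: free action-acyclic L-crossed module functor} provides a left adjoint $F$ to the inclusion $J$, and $\TXMod_L(\A)$ is full by definition. Since $\XMod_L(\A)$ is Barr-exact, the plan is to verify the two remaining clauses of Definition~\ref{defi:birkhoff subcategory} directly: that $\TXMod_L(\A)$ is closed under subobjects and under regular quotients. Throughout I would view, for each $L$-crossed module $(\partial\colon{M\to L},\xi)$, both $K_{\partial}$ and $[L,M]$ as normal subobjects of $M$, and I would rely on the fact that the normal subobjects of an object of a semi-abelian category form a modular lattice.

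For subobjects, I would start from a monomorphism $(f,1_L)\colon(N\xrightarrow{\partial_N}L,\xi_N)\to(M\xrightarrow{\partial_M}L,\xi_M)$ with action-acyclic codomain; by Lemma~\ref{lemma:monomorphisms in XMOD_L(A)} the component $f$ is monic in $\A$. From $\partial_N=\partial_M\comp f$ one gets $K_{\partial_N}\leq K_{\partial_M}$, while monotonicity of the Higgins commutator (Proposition~\ref{Higgins properties}) gives $[L,N]\leq[L,M]$. Hence $K_{\partial_N}\wedge[L,N]\leq K_{\partial_M}\wedge[L,M]=0$, so the subobject is again action-acyclic. This step is routine.

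The substantial clause is closure under regular quotients. Given a regular epimorphism $(f,1_L)\colon(M\xrightarrow{\partial}L,\xi)\to(M'\xrightarrow{\partial'}L,\xi')$ with action-acyclic domain (so that $f$ is a regular epimorphism in $\A$, regular epimorphisms in $\XMod_L(\A)$ being detected on the $M$-component as in Lemma~\ref{lemma:regular epimorphisms in XMOD(A)}), the whole matter reduces to proving the image identity $f(K_{\partial}\wedge[L,M])=K_{\partial'}\wedge[L,M']$; action-acyclicity of the codomain then follows at once from $K_{\partial}\wedge[L,M]=0$. Writing $K_f=\ker f$, the relation $\partial=\partial'\comp f$ yields $f^{-1}(K_{\partial'})=K_{\partial}$, and in particular the containment $K_f\leq K_{\partial}$. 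Proposition~\ref{Higgins properties} gives $f([L,M])=[L,M']$, whence $f^{-1}([L,M'])=[L,M]\vee K_f$ because $K_f$ is normal. Since taking preimages preserves intersections and $f$ is a regular epimorphism, I can then compute
\[
K_{\partial'}\wedge[L,M']=f\bigl(f^{-1}(K_{\partial'})\wedge f^{-1}([L,M'])\bigr)=f\bigl(K_{\partial}\wedge([L,M]\vee K_f)\bigr),
\]
and finally invoke the modular law---legitimate precisely because $K_f\leq K_{\partial}$---to rewrite $K_{\partial}\wedge([L,M]\vee K_f)=(K_{\partial}\wedge[L,M])\vee K_f$, so that the right-hand side collapses to $f(K_{\partial}\wedge[L,M])$.

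The one genuinely delicate point is this last clause: the naive identity $f(K_{\partial}\wedge[L,M])=f(K_{\partial})\wedge f([L,M])$ is false in general, and what rescues it is exactly the combination of the containment $K_f\leq K_{\partial}$ (forced by $\partial=\partial'\comp f$) with the modularity of the lattice of normal subobjects. An equivalent route would be to show, via Lemma~\ref{lemma:equivalent conditions to Birkhoff} and Lemma~\ref{lemma:ker of regular pushout is regular epimorphism}, that the kernel of the unit of $F$ preserves regular epimorphisms; that formulation leads to the very same computation.
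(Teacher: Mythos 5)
Your argument is correct, but it takes a genuinely different route from the paper's. Rather than checking the clauses of Definition~\ref{defi:birkhoff subcategory} one by one, the paper follows the pattern of Proposition~\ref{prop:birkhoff subcategory of actions} and invokes Corollary~5.7 of~\cite{EverVdL1}: Birkhoffness reduces to showing that the kernel-of-the-unit subfunctor $(\partial\colon M\to L,\xi)\mapsto K_{\partial}\wedge[L,M]$ preserves regular epimorphisms---which is exactly the image identity $f(K_{\partial}\wedge[L,M])=K_{\partial'}\wedge[L,M']$ you isolate as the crux, so your closing remark correctly anticipates the paper's framing, and closure under subobjects is subsumed by the criterion rather than checked separately. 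The proofs of that identity differ, though. You compute in the lattice of normal subobjects, via $f^{-1}(K_{\partial'})=K_{\partial}$, $f^{-1}([L,M'])=[L,M]\vee K_f$, and the modular law applied with $K_f\leq K_{\partial}$. The paper instead pastes pullbacks in a cube: the front and back faces are the defining intersection pullbacks, the bottom face is a pullback because $K_{\partial}=f^{-1}(K_{\partial'})$ (your observation from $\partial=\partial'\comp f$), hence the top face is a pullback as well, exhibiting the comparison $\phi\colon K_{\partial}\wedge[L,M]\to K_{\partial'}\wedge[L,M']$ as a pullback of the regular epimorphism $[1_L,f]$; pullback-stability of regular epimorphisms then concludes. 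The cube argument is leaner: it needs neither modularity of the normal-subobject lattice, nor normality of $[L,M]$ in $M$, nor the formula $f^{-1}f(S)=S\vee K_f$---all of which do hold in the semi-abelian setting (modularity via the correspondence between normal subobjects and equivalence relations in a Mal'tsev context, the preimage formula via protomodularity), but they are standard facts you would need to cite or prove, since the paper never records them. What your route buys in exchange is independence from the external criterion of~\cite{EverVdL1} and a direct verification of the definition. Both versions share the remaining ingredients: reflectivity from Construction~\ref{constr: free action-acyclic L-crossed module functor}, detection of regular epimorphisms on the $M$-component via Lemma~\ref{lemma:regular epimorphisms in XMOD(A)}, and the fact that $[1_L,f]$ (equivalently, that $f([L,M])=[L,M']$) is a regular epimorphism, which the paper draws from Lemma~5.11 of~\cite{MM10} and you draw from Proposition~\ref{Higgins properties}.
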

\begin{proof}
This proof follows the pattern of Proposition~\ref{prop:birkhoff subcategory of actions}: by Corollary~5.7 in~\cite{EverVdL1}, it suffices that the subfunctor of $1_{\XMod(\A)}$ determined by the construction in Definition~\ref{Def:AAXMod} preserves regular epimorphisms. So, consider a morphism as in 
\begin{equation*}
\label{diag:morphism between L-crossed modules}
(M\xrightarrow{\partial}L,\xi)\xlongrightarrow{(f,1_L)}(M'\xrightarrow{\partial'}L,\xi')
\end{equation*}
which is also a regular epimorphism in $\XMod_L(\A)$. Due to Lemma~\ref{lemma:regular epimorphisms in XMOD(A)}, this means that $f$ is a regular epimorphism in $\A$. Consider the cube
\[
\resizebox{.4\textwidth}{!}
{$
\label{diag:cube induced by morphism}
\vcenter{\xymatrix@!0@=5em{
K_{\partial}\wedge [L,M]
\ar[rr]
\ar[dd]
\ar[rd]|-{\phi}
&&
[L,M]
\ar[dd]|-{\hole}
\ar[rd]|-{[1_L,f]}
\\
&
K_{\partial'}\wedge [L,M']
\ar[rr]
\ar[dd]
&&
[L,M']
\ar[dd]
\\
K_{\partial}
\ar[rr]^(.33){k_{\partial}}|(0.5){\hole}
\ar[rd]
&&
M
\ar[rd]|-{f}
\\
&
K_{\partial'}
\ar[rr]_-{k_{\partial'}}
&&
M'.
}}$}
\]
Its front and back faces are pullbacks by definition of the intersection, while the bottom face is a pullback since $(f,1_L)$ is a morphism of crossed modules over $L$. Hence the top square is a pullback as well. It follows that $\phi$ is a regular epimorphism, because so is $[1_L,f]$.
\end{proof}

A functor is \emph{protoadditive} when it preserves split short exact sequences. This concept was introduced in~\cite{EG-honfg} and further studied in~\cite{Everaert-Gran-TT}, in the context of pointed protomodular categories. Here we just need to explain that in a quasi-pointed category, kernels and cokernels are defined by pulling back and pushing out along the zero object: see~\cite{Bou01}.

\begin{theorem}
\label{thm:F is protoadditive}
The reflector $F$ is protoadditive.
\end{theorem}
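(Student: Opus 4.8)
The plan is to unwind the definition of protoadditivity and reduce everything to a single commutator identity. Since a functor preserves split short exact sequences exactly when it sends each such sequence to another one, I would begin with an arbitrary split short exact sequence in $\XMod_L(\A)$,
\[
0\to (K\xrightarrow{0}L,\xi_K)\xrightarrow{(k,1_L)}(A\xrightarrow{\partial_A}L,\xi_A)\overset{(p,1_L)}{\underset{(s,1_L)}{\rightleftarrows}}(B\xrightarrow{\partial_B}L,\xi_B)\to 0,
\]
recalling from Remark~\ref{rmk:kernels in XMOD_L(A)} that the kernel object has zero differential, so the kernel of its differential is all of $K$ and hence $F(K\xrightarrow{0}L,\xi_K)=K/(K\wedge[L,K])=K/[L,K]$, while $F(A)=A/(K_{\partial_A}\wedge[L,A])$ and similarly for $B$. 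As a left adjoint, $F$ preserves the cokernel $p=\mathrm{coker}(k)$ and trivially preserves the splitting, so $F(p)$ is a split (hence regular) epimorphism with $F(p)=\mathrm{coker}(F(k))$. In a Barr-exact protomodular category a normal monomorphism is the kernel of its own cokernel; the whole statement therefore reduces to showing that $F(k)$ is a \emph{normal monomorphism}, since then $F(k)=\ker(\mathrm{coker}\,F(k))=\ker F(p)$ and the image sequence is again split short exact.

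For injectivity, write $[A]_F:=K_{\partial_A}\wedge[L,A]$. Because $(k,1_L)$ is a morphism of $L$-crossed modules we have $\partial_A\comp k=0$, i.e. $K\le K_{\partial_A}$, so that $K\wedge[A]_F=K\wedge[L,A]$; the kernel of $F(k)$ is thus $(K\wedge[L,A])/[L,K]$, and $F(k)$ is a monomorphism if and only if $K\wedge[L,A]=[L,K]$ inside $A\rtimes L$. This commutator identity is the heart of the argument. To prove it I would work inside the semi-abelian object $A\rtimes L$, where $[L,A]\le A$ and $A=K\vee s(B)$ (the kernel and the section being jointly extremal-epimorphic). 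The join decomposition formula, Proposition~\ref{Higgins properties}(6), gives
\[
[L,A]=[L,K]\vee[L,s(B)]\vee[L,K,s(B)].
\]

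The essential step, and the one where \SH\ is indispensable, is the vanishing of the ternary term. Since $(\partial_A\colon A\to L,\xi_A)$ is a crossed module we have $[K_{\partial_A},A]=0$ by Example~\ref{Ex:CentralExtInXMod}, whence $[K,A]=0$; as $K=\ker(p\rtimes 1_L)\lhd A\rtimes L$ and $A\lhd A\rtimes L$, the \emph{Smith is Huq} condition forces $[K,A,A\rtimes L]=0$, and by monotonicity $[L,K,s(B)]\le[A\rtimes L,K,A]=0$. Hence $[L,A]=[L,K]\vee[L,s(B)]$ with $[L,s(B)]=s([L,B])\le s(B)$, so $K\wedge[L,s(B)]\le K\wedge s(B)=0$; a routine split-extension argument (using that $p\rtimes 1_L$ identifies $[L,s(B)]$ with $[L,B]$ while killing $[L,K]$) then yields $K\wedge[L,A]=[L,K]$, as required.

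It remains to check normality of $F(k)$. Here $K_{\partial_A}\lhd A$ and $[L,A]\lhd A$ (being normal in $A\rtimes L$ and contained in $A$), so their intersection $[A]_F\lhd A$; since also $K\lhd A$, the image $(K\vee[A]_F)/[A]_F$ of $F(k)$ is normal in $F(A)=A/[A]_F$. Thus $F(k)$ is a normal monomorphism, which by the reduction above shows that $F(K)\to F(A)\rightleftarrows F(B)$ is again a split short exact sequence, i.e. that $F$ is protoadditive. I expect the delicate point to be precisely the vanishing of the ternary commutator $[L,K,s(B)]$: this is exactly where \SH\ enters, and without it the reflector would fail to preserve split extensions.
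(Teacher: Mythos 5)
Your reduction of protoadditivity to the single commutator identity $K\wedge[L,A]=[L,K]$ is a legitimate and rather elegant reorganisation---this identity is precisely what the paper establishes in step~(1) of its proof, where it computes the kernel of $[1_L,f]\colon[L,M]\to[L,M']$ to be $[L,K]$---and your use of \SH\ to kill the ternary term $[L,K,s(B)]$ (from $[K,A]=0$ with $K,A\lhd A\rtimes L$, so $[K,A,A\rtimes L]=0$) matches the technique of Proposition~\ref{prop:central extensions of L-crossed modules induce crossed squares}. But there is a genuine gap at the phrase ``a routine split-extension argument then yields $K\wedge[L,A]=[L,K]$''. From the facts you cite---$[L,A]=[L,K]\vee[L,s(B)]$, $[L,K]\leq K\wedge[L,A]=\ker(q|_{[L,A]})$ for $q=p\rtimes 1_L$, and $q$ restricting to an isomorphism on $[L,s(B)]$---the conclusion does not follow: subobject lattices in a semi-abelian category are not distributive, so $K\wedge\bigl([L,K]\vee[L,s(B)]\bigr)$ need not split as $[L,K]\vee\bigl(K\wedge[L,s(B)]\bigr)$. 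Abstractly you are claiming: given a split epimorphism with kernel $N$ and section image $J$, any $N'\leq N$ with $N'\vee J=X$ must equal $N$. This already fails in groups: take $X=A_4=V_4\rtimes C_3$, $N=V_4$, $J=\langle(123)\rangle$, $N'=\langle(12)(34)\rangle$; then $N'\vee J=A_4$, $q|_J$ is an isomorphism and $q(N')=1$, yet $N'\neq N$.

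The step can be repaired, but it costs another commutator computation rather than routine diagram chasing: show $[L,K]\lhd A\rtimes L$ by writing $A\rtimes L=A\vee L$ and using the join decomposition, $[A\vee L,[L,K]]=[A,[L,K]]\vee[L,[L,K]]\vee[A,L,[L,K]]$, where the first term dies in $[A,K]=0$, the second lies in $[L,K]$ since $[L,K]\leq K$, and the third dies by the same \SH\ argument; once $[L,K]$ is normal, the quotient $[L,A]/[L,K]$ is covered by the image of the section $[L,s(B)]$, so the induced split epimorphism onto $[L,B]$ is an isomorphism and $K\wedge[L,A]=[L,K]$ follows. The paper sidesteps this issue entirely: instead of decomposing the join $[L,A]$, it decomposes the cosmash product $L\diamond A$ itself into a split short exact sequence with kernel $(L\diamond K\diamond A)\rtimes(L\diamond K)$ over $L\diamond B$ (Proposition~2.24 of~\cite{HVdL11}), then passes to images using a regular-pushout argument on kernels (Lemma~\ref{lemma:ker of regular pushout is regular epimorphism}), obtaining the exact equality $K\wedge[L,A]=[L,K]\vee[L,K,A]$ before killing the ternary term by centrality of $K$. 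It then proves protoadditivity of $(\partial\colon{M\to L},\xi)\mapsto K_\partial\wedge[L,M]$ separately and concludes with the $3\times3$-Lemma; your normal-monomorphism reduction is a genuinely shorter packaging of these last two steps---but only once the key identity is actually proved.
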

\begin{proof}
The proof is made of the following steps:
\begin{enumerate}
 \item Show that the functor that sends an $L$-crossed module $(\partial\colon{M\to L},\xi)$ to the commutator $[L,M]$ is protoadditive;
 \item show that the functor $(\partial\colon{M\to L},\xi)\mapsto (K_{\partial}\wedge[L,M])$ is protoadditive;
 \item use the $3\times 3$-Lemma to conclude that $F$ is protoadditive.
\end{enumerate}
For what regards (1) the aim is to prove that any split short exact sequence of $L$-crossed modules 
\begin{equation}
\label{diag:split short exact sequence of L-crossed module}
\xymatrix{
(K\xrightarrow{0}L,\underline{\xi}) \ar[r]^-{(k,1_L)} & (M\xrightarrow{\partial}L,\xi) \ar@<.5ex>[r]^- {(f,1_L)} & (M'\xrightarrow{\partial'}L,\xi') \ar@<.5ex>[l]^-{(g,1_L)}
}
\end{equation}
induces a split short exact sequence of Higgins commutators 
\[
\xymatrix{
0 \ar[r] & [L,K] \ar[r]^-{[1_L,k]} & [L,M] \ar@<.5ex>[r]^-{[1_L,f]} & [L,M'] \ar@<.5ex>[l]^-{[1_L,g]} \ar[r] & 0
}
\]
From the fact that 
\[
\xymatrix{
0 \ar[r] & K \ar[r]^-{k} & M \ar@<.5ex>[r]^-{f} & M' \ar@<.5ex>[l]^-{g} \ar[r] & 0
}
\]
is a split exact sequence in the base category, by using Proposition~$2.24$ in~\cite{HVdL11} we obtain that 
\[
\xymatrix{
0 \ar[r] & (L\diamond K\diamond M)\rtimes (L\diamond K) \ar[r] & L\diamond M \ar@<.5ex>[r]^-{1_L\diamond f} & L\diamond M' \ar@<.5ex>[l]^-{1_L\diamond g} \ar[r] & 0
}
\]
is a split exact sequence as well. We have the comparison arrows
\[
\xymatrix{
0 \ar[r] & (L\diamond K\diamond M)\rtimes (L\diamond K) \ar[d] \ar[r] & L\diamond M \ar[d]_-{\xi^{\diamond}} \ar@<.5ex>[r]^-{1_L\diamond f} & L\diamond M' \ar[d]^-{\xi'^{\diamond}} \ar@<.5ex>[l]^-{1_L\diamond g} \ar[r] & 0\\
0 \ar[r] & K \ar[r]_-{k} & M \ar@<.5ex>[r]^-{f} & M' \ar@<.5ex>[l]^-{g} \ar[r] & 0
}
\]
whose images, from right to left, are $[L,M']$, $[L,M]$ and $[L,K,M]\vee [L,K]$. These images form a split short exact sequence: taking kernels to the left, 
\[
\xymatrix{0 \ar[r] & (L\diamond K\diamond M)\rtimes (L\diamond K) \ar[d] \ar[r] & L\diamond M \ar[d] \ar@<.5ex>[r]^-{1_L\diamond f} & L\diamond M' \ar[d] \ar@<.5ex>[l]^-{1_L\diamond g} \ar[r] & 0\\
0 \ar[r] & K_{[1_L,f]} \ar[d] \ar[r] & [L, M] \ar[d] \ar@<.5ex>[r]^-{[1_L,f]} & [L,M'] \ar[d] \ar@<.5ex>[l]^-{[1_L,g]} \ar[r] & 0\\
0 \ar[r] & K \ar[r]_-{k} & M \ar@<.5ex>[r]^-{f} & M' \ar@<.5ex>[l]^-{g} \ar[r] & 0}
\]
we see that the bottom left square is a pullback, since the bottom right vertical arrow is a monomorphism; likewise, the top right square is a regular pushout, so the top left vertical arrow is a regular epimorphism. It follows that $K_{[1_L,f]}$ is the image $[L,K,M]\vee [L,K]$ of the left vertical composite. Since $K$ is central in $M$---here we use the underlying crossed module structure---we have $[L,K,M]\leq [M,K,M]\leq [K,M]=0$, so that $K_{[1_L,f]}=[L,K]$.

For (2), consider the diagram in Figure~\ref{Fig:SASCube}. 
\begin{figure}
\resizebox{.5\textwidth}{!}
{$\xymatrix@!0@=5.5em{
K_0\wedge [L,K]
\ar@{=}[rrr]
\ar[rd]^-{\priem{k}}
\ar[ddd]
&&&
[L,K]
\ar[ddd]|(0.33){\hole}|(0.66){\hole}
\ar[rd]^-{[1_L,k]}
\\
&
K_{\partial}\wedge [L,M]
\ar[rrr]^(0.5){\underline{k_{\partial}}}
\ar@<.5ex>[rd]^-{\priem{f}}
\ar[ddd]
&&&
[L,M]
\ar@<.5ex>[rd]^-{[1_L,f]}
\ar[ddd]|(0.33){\hole}
\\
&&
K_{\partial'}\wedge [L,M']
\ar[rrr]^(0.5){\underline{k_{\partial'}}}
\ar[ddd]
\ar@<.5ex>[lu]^-{\priem{g}}
&&&
[L,M']
\ar@<.5ex>[lu]^-{[1_L,g]}
\ar[ddd]
\\
K_0
\ar@{=}[rrr]|(0.33){\hole}|(0.66){\hole}
\ar[rd]
&&&
K
\ar[rd]^-{k}
\\
&
K_{\partial}
\ar[rrr]|(0.33){\hole}^(0.5){k_{\partial}}
\ar@<.5ex>[rd]
&&&
M
\ar@<.5ex>[rd]^-{f}
\\
&&
K_{\partial'}
\ar[rrr]^(0.5){k_{\partial'}}
\ar@<.5ex>[lu]
&&&
M'
\ar@<.5ex>[lu]^-{g}
}$}
\caption{A square of split exact sequences from the proof of Theorem~\ref{thm:F is protoadditive}.}\label{Fig:SASCube}
\end{figure}
It is trivial that $[1_L,f]\comp[1_L,g]=1_{[L,M']}$. Then it remains to show that $\priem{k}=k_{\priem{f}}$. Suppose that $\alpha\colon {A\to K_{\partial}\wedge[L,M]}$ satisfies $\priem{f}\comp \alpha=0$. Then $0=\underline{k_{\partial'}}\comp\priem{f}\comp \alpha=[1_L,f]\comp\underline{k_{\partial}}\comp \alpha$. Since $[1_L,k]=k_{[1_L,f]}$, we have a unique $\gamma\colon A\to [L,K]$ such that $[1_L,k]\comp\gamma=\underline{k_{\partial}}\comp \alpha$. Using that $\underline{k_{\partial}}$ is a monomorphism, from the equality $[1_L,k]=\underline{k_{\partial}}\comp \priem{k}$ we deduce that $\priem{k}\comp\gamma=\alpha$.

Finally, in order to prove (3), consider the diagram in Figure~\ref{Fig:3x3}. 
\begin{figure}
\resizebox{.7\textwidth}{!}
{$\xymatrix{
(K_{0}\wedge[L,K]\xrightarrow{0}L,\underline{\phi}) \ar[d] \ar[r]^-{(\priem{k},1_L)} & (K_{\partial}\wedge[L,M]\xrightarrow{0}L,\phi) \ar[d] \ar@<.5ex>[r]^-{(\priem{f},1_L)} & (K_{\partial'}\wedge[L,M']\xrightarrow{0}L,\phi') \ar[d] \ar@<.5ex>[l]^-{(\priem{g},1_L)}\\
(K\xrightarrow{0}L,\underline{\xi}) \ar[d] \ar[r]^-{(k,1_L)} & (M\xrightarrow{\partial}L,\xi) \ar[d] \ar@<.5ex>[r]^-{(f,1_L)} &
(M'\xrightarrow{\partial'}L,\xi') \ar[d] \ar@<.5ex>[l]^-{(g,1_L)}\\
F(K\xrightarrow{0}L,\underline{\xi}) \ar[r]^-{F(k,1_L)} & F(M\xrightarrow{\partial}L,\xi) \ar@<.5ex>[r]^-{F(f,1_L)} &
F(M'\xrightarrow{\partial'}L,\xi') \ar@<.5ex>[l]^-{F(g,1_L)}
}$}
\caption{$3\times 3$-diagram in the proof of Theorem~\ref{thm:F is protoadditive}.}\label{Fig:3x3}
\end{figure}
Each column is exact by definition of the functor $F$ and the middle row is exact by hypothesis. From the description of kernels in Remark~\ref{rmk:kernels in XMOD_L(A)} and from the previous step, we deduce that the top row is exact as well. Now it suffices to use the $3\times 3$-Lemma to obtain that also the bottom row is exact.
\end{proof}

\begin{remark}
When $\A$ is a strongly protomodular category (see~\cite{BB04}) we can give a simpler proof of the protoadditivity of the functor $F$ by changing the way in which (1) is shown in Theorem~\ref{thm:F is protoadditive}. This proof uses Proposition~\ref{prop:any regular epimorphism of L-crossed module is a central extension with respect to Ab} as follows.

Consider a split short exact sequence of $L$-crossed modules as in~\eqref{diag:split short exact sequence of L-crossed module}: by Proposition~\ref{prop:any regular epimorphism of L-crossed module is a central extension with respect to Ab} we know that $f$ is a central extension in $\A$ (with respect to $\Ab(\A)$), but since it is also split, it is a trivial extension and hence a product projection. In particular, $g$ is a normal monomorphism, and since $\A$ is a strongly protomodular category, it follows that $(g,1_L)$ is a normal monomorphism of $L$-actions: since it is a split monomorphism as well, we see that $(g,1_L)$ is a product inclusion and $(f,1_L)$ is a product projection in $\Act_L(\A)$.

Notice that in the semi-abelian context, regular epi--reflectors preserve products. In particular, $F$ preserves products, so it sends the split short exact sequence~\eqref{diag:split short exact sequence of L-crossed module} into a sequence which is again split exact. Finally by the $3\times 3$-Lemma we deduce that the induced sequence of commutators is split short exact as well.
\end{remark}

Now, using Lemma~\ref{lemma:equivalent conditions to central} we can reformulate centrality as follows.

\begin{lemma}\label{Central via good kernel}
An extension $(f,1_L)\colon (\partial\colon M\to L,\xi)\to(\partial'\colon M'\to L,\xi')$ is central with respect to $\TXMod_L(\A)$ if and only if its kernel 
\[
\xymatrix{
K_{(f,1_L)}=(K_f\xrightarrow{0}L,\underline{\xi}) \pullback \ar[r] \ar[d] & (M\xrightarrow{\partial}L,\xi) \ar[d]^-{(f,1_L)}\\
(0\xrightarrow{0}L,\tau^L_0) \ar[r] & (M'\xrightarrow{\partial'}L,\xi')
}
\]
is an action-acyclic crossed module.
\end{lemma}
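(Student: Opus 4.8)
The plan is to reduce the statement to a question about a single split epimorphism by combining the centrality criterion of Lemma~\ref{lemma:equivalent conditions to central} with the protoadditivity of $F$ just established in Theorem~\ref{thm:F is protoadditive}. Since $\XMod_L(\A)$ is exact and protomodular and $\TXMod_L(\A)$ is Birkhoff in it, Lemma~\ref{lemma:equivalent conditions to central} applies directly: the extension $(f,1_L)$ is $\TXMod_L(\A)$-central if and only if one of the projections, say $\pi_1$, of the kernel pair $\KP(f,1_L)$ is a $\TXMod_L(\A)$-trivial extension. The projection $\pi_1$ is a split epimorphism, split by the diagonal, and its kernel is exactly $K_{(f,1_L)}=(K_f\xrightarrow{0}L,\underline{\xi})$, because the kernel of a kernel-pair projection coincides with the kernel of the original map (the second projection restricts to an isomorphism onto $K_f$), and kernels in $\XMod_L(\A)$ are computed as in Remark~\ref{rmk:kernels in XMOD_L(A)}. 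So everything hinges on deciding when $\pi_1$ is trivial.

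The key intermediate step I would isolate is the general principle that, for the protoadditive reflector $F$, a split epimorphism is $\TXMod_L(\A)$-trivial if and only if its kernel lies in $\TXMod_L(\A)$. Given any split epimorphism with kernel $(K\to L)$, protoadditivity sends the associated split short exact sequence to another split short exact sequence; the naturality morphism of the unit $\eta$ at the split epimorphism is therefore a morphism of short exact sequences whose restriction to the kernels is the unit $\eta_{(K\to L)}$. The triviality square is a pullback precisely when the induced comparison to the pullback is an isomorphism, and by the Short Five Lemma for regular epimorphisms this holds exactly when $\eta_{(K\to L)}$ is an isomorphism, that is, when $(K\to L)$ already lies in the full reflective subcategory $\TXMod_L(\A)$.

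Applying this to $\pi_1$, whose kernel is $K_{(f,1_L)}$, yields that $(f,1_L)$ is $\TXMod_L(\A)$-central if and only if $K_{(f,1_L)}\in\TXMod_L(\A)$, which is precisely the assertion that $K_{(f,1_L)}$ is action-acyclic. As a sanity check one sees that this recovers the ad-hoc notion: the underlying map of $K_{(f,1_L)}$ is the zero map $0\colon K_f\to L$, whose kernel is all of $K_f$, and since $[L,K_f]\leq K_f$ always, the action-acyclicity condition $K_f\wedge[L,K_f]=0$ simply says $[L,K_f]=0$, matching Definition~\ref{Ad-hoc centrality}.

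The hard part will be that all of this takes place in $\XMod_L(\A)$, which is only quasi-pointed rather than pointed. The protoadditivity argument and the ``pullback square iff isomorphism on kernels'' step are entirely routine in the semi-abelian setting, but here one must verify that the relevant notions of kernel, of short exact sequence, and the Short Five Lemma behave as expected in the sequentiable category $\XMod_L(\A)$. I expect the cleanest way around this obstacle is to perform the split-exact-sequence computations in the genuinely semi-abelian category $\XMod(\A)$ and transport the conclusion back, exactly as the construction of $F$ in Construction~\ref{constr: free action-acyclic L-crossed module functor} already does.
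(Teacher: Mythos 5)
Your proof is correct and essentially reproduces the paper's own argument: both reduce centrality to triviality of the kernel-pair projection $(\pi_1,1_L)$ via Lemma~\ref{lemma:equivalent conditions to central}, identify $K_{(\pi_1,1_L)}$ with $K_{(f,1_L)}$, and use protoadditivity of $F$ (Theorem~\ref{thm:F is protoadditive}) to conclude that the triviality square is a pullback precisely when the unit at the kernel is an isomorphism, i.e.\ when the kernel lies in $\TXMod_L(\A)$. The only cosmetic difference is that for the final ``pullback iff isomorphism on kernels'' step you argue via the split short five lemma, whereas the paper cites Proposition~7 of \cite{Bou01}, which packages the same protomodularity fact in the quasi-pointed setting.
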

\begin{proof}
The characterisation of central extensions as those extensions whose kernel lies in the given Birkhoff subcategory is actually valid for protoadditive Birkhoff reflectors in general~\cite{Everaert-Gran-TT}, at least in the homological context; we repeat the argument for the sake of convenience. 

We write $((\partial''\colon M\times_{M'}M\to L,\xi''),(\pi_1,1_L),(\pi_2,1_L))$ for the kernel pair of $(f,1_L)$. The given extension is then central if and only if the projection
\[
(\pi_1,1_L)\colon (M\times_{M'}M\xrightarrow{\partial''} L,\xi'')\to (M\xrightarrow{\partial}L,\xi) 
\]
is a trivial extension. Since $\pi_1$ is a split epimorphism, we have the diagram
\[
\label{diag:morphism of short exac sequence}
\vcenter{
\xymatrixcolsep{1.5pc}
\xymatrix{
0 \ar[r] & K_{(f,1_L)} \ar[r] \ar[d] & (M\times_{M'}M\xrightarrow{\partial''} L,\xi'') \ar[d] \ar@<.5ex>[rr]^-{(\pi_1,1_L)} && (M\xrightarrow{\partial}L,\xi) \ar[d] \ar@<.5ex>[ll] \ar[r] & 0\\
0 \ar[r] & F(K_{(f,1_L)}) \ar[r] & F((M\times_{M'}M\xrightarrow{\partial''}L,\xi'')) \ar@<.5ex>[rr]^-{F((\pi_1,1_L))} && F((M\xrightarrow{\partial}L,\xi)) \ar@<.5ex>[ll] \ar[r] & 0
}
}
\]
where the vertical morphisms are the components of the unit. Notice that the first row is exact since $K_{(\pi_1,1_L)}=K_{(f,1_L)}$. Hence the second row is exact, because $F$ is protoadditive. By definition we have that $(\pi_1,1_L)$ is a trivial extension if and only if the square on the right is a pullback, but this is true if and only if the vertical morphism on the left is an isomorphism~\cite[Proposition~$7$]{Bou01}. 
\end{proof}

\begin{theorem}
\label{cor:equivalent condition to centrality}
An extension of $L$-crossed modules in $\A$ is central with respect to $\TXMod_L(\A)$ if and only if it is a central extension in the sense of Definition~\ref{Ad-hoc centrality}.
\end{theorem}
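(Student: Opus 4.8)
The plan is to reduce the statement to a direct comparison of two conditions on the kernel of the extension, using the reformulation of centrality already obtained in Lemma~\ref{Central via good kernel}. That lemma tells us that an extension $(f,1_L)$ is central with respect to $\TXMod_L(\A)$ precisely when its kernel $(K_f\xrightarrow{0}L,\underline{\xi})$---computed as in Remark~\ref{rmk:kernels in XMOD_L(A)}---is an action-acyclic $L$-crossed module. So it suffices to show that this kernel is action-acyclic if and only if $[L,K_f]=0$, which is exactly the defining condition of a central extension in the sense of Definition~\ref{Ad-hoc centrality}.

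Unfolding Definition~\ref{Def:AAXMod} for the kernel crossed module is where everything collapses. Its underlying object is $M=K_f$ and its differential is the zero map $\partial=0\colon K_f\to L$; hence the kernel $K_\partial$ of this differential is all of $K_f$. The action-acyclicity condition $K_\partial\wedge[L,M]=0$ therefore reads $K_f\wedge[L,K_f]=0$. Now $[L,K_f]$ is by construction (Proposition~\ref{prop:higgins=coinvariance}) a subobject of $K_f$, so the intersection $K_f\wedge[L,K_f]$ is simply $[L,K_f]$ itself. Thus action-acyclicity of the kernel is equivalent to $[L,K_f]=0$, and the two notions of centrality coincide.

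The only point requiring care---and the closest thing to an obstacle---is the identification $K_\partial\cong K_f$ for a crossed module whose differential is zero, together with the observation that the commutator $[L,K_f]$ always sits inside $K_f$; once these are in place the equivalence is immediate. In particular no new commutator computation is needed, since the heavy lifting (that centrality with respect to the Birkhoff subcategory is detected on kernels) has already been carried out via the protoadditivity of $F$ in Theorem~\ref{thm:F is protoadditive} and Lemma~\ref{Central via good kernel}.
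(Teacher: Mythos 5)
Your proposal is correct and takes essentially the same route as the paper: its proof of this theorem is literally the one-line citation of Lemma~\ref{Central via good kernel} together with Proposition~\ref{prop:higgins=coinvariance}, and your unfolding of Definition~\ref{Def:AAXMod} for the kernel crossed module---$K_\partial=K_f$ because the differential is zero, and $K_f\wedge[L,K_f]=[L,K_f]$ because $[L,K_f]\leq K_f$ by construction---is exactly the verification those citations compress. The details you flag as needing care are indeed the right ones, and they all hold.
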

\begin{proof}
This is a consequence of Lemma~\ref{Central via good kernel} and Proposition~\ref{prop:higgins=coinvariance}.
\end{proof}

We have to generalise Definition~\ref{defi:perfect object in pointed setting} to the quasi-pointed \cite{Bou01} exact environment of $\XMod_L(\A)$. There seems to be no single categorically sound approach to this; so we stick with the following ad-hoc interpretation:

\begin{definition}
\label{defi:perfect object in XMod_L(A) external}
Given an $L$-crossed module $(\partial\colon{M\to L},\xi)$, we say that it is \emph{perfect} (with respect to $\TXMod_L(\A)$) whenever its underlying action is perfect (with respect to $\TrivAct_L(\A)$), which means that $[L,M]=M$.
\end{definition}

The aim of the next section is to make this more natural: we set up a Galois theory with respect to which both the central extensions and the perfect objects agree with those needed in Section~\ref{Section Ad Hoc}.

\section{Galois theory interpretation, pointed setting}\label{Section Pointed}

Given an internal crossed module $(\partial\colon{M\to L},\xi)$, Lemma~\ref{lemma:M over the commutator is abelian} tells us that the quotient ${M}/{[L,M]}$ is always an abelian object. Clearly, this induces a functor determined by
\begin{align*}
 F\colon\XMod(\A)\to \Ab(\A)\colon (M\xrightarrow{\partial}L,\xi)\mapsto \frac{M}{[L,M]},
\end{align*}
which has a right adjoint given by the inclusion of abelian objects as particular crossed modules. Indeed, via Lemma~\ref{Lem:AbelianObject} we obtain a functor
\begin{align*}
 G\colon\Ab(\A)\to \XMod(\A)\colon A\mapsto (A\xrightarrow{0}0,\tau^0_A).
\end{align*}
which allows us to view $\Ab(\A)$ as a subcategory of $\XMod(\A)$. As before, we may follow the pattern of Proposition~\ref{prop:birkhoff subcategory of actions} to show that $F$ is a Birkhoff reflector with right adjoint $G$; this follows immediately from Corollary~5.7 in~\cite{EverVdL1} and the fact that the commutator functor $[L,-]$ preserves regular epimorphisms. Thus we see:

\begin{proposition}
The category $\Ab(\A)$ is a Birkhoff subcategory of $\XMod(\A)$, with reflector $F$ whose right adjoint is $G$.\noproof
\end{proposition}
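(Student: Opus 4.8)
The plan is to imitate the proof of Proposition~\ref{prop:birkhoff subcategory of actions} and reduce everything to Corollary~5.7 in~\cite{EverVdL1}. First I would record that $G$ is fully faithful, so that $\Ab(\A)$ really is a full subcategory: by Lemma~\ref{Lem:AbelianObject} each $(A\xrightarrow{0}0,\tau^0_A)$ is a crossed module, and a morphism $G(A)\to G(A')$ is exactly a morphism $A\to A'$ in $\A$ respecting the trivial structure, i.e.\ a morphism in $\Ab(\A)$. The adjunction $F\dashv G$ is then the one already described: a morphism $(M\xrightarrow{\partial}L,\xi)\to G(A)=(A\xrightarrow{0}0,\tau^0_A)$ is a pair $(h\colon M\to A,\,L\to 0)$, and compatibility with the actions (the action on $A$ being trivial) forces $h\comp\xi\comp i_{L,M}=0$, which is to say $h([L,M])=0$---the same computation that appears in the final proof of Section~\ref{Section Ad Hoc}. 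Hence $h$ factors uniquely through the quotient $q\colon M\twoheadrightarrow M/[L,M]=F(M\xrightarrow{\partial}L,\xi)$, giving the natural bijection with $\Hom_{\Ab(\A)}(F(M\xrightarrow{\partial}L,\xi),A)$ and identifying the unit at $(M\xrightarrow{\partial}L,\xi)$ with the morphism of crossed modules $(q,0)$.

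Next I would observe that this unit is a regular epimorphism: both $q$ and $0\colon L\to 0$ are regular epimorphisms in $\A$, so by Lemma~\ref{lemma:regular epimorphisms in XMOD(A)} the pair $(q,0)$ is a regular epimorphism in $\XMod(\A)$. Computing its kernel componentwise in the semi-abelian category $\XMod(\A)$, the kernel of $q$ is $[L,M]\lhd M$ (understood as in Proposition~\ref{prop:higgins=coinvariance}) while the kernel of $0\colon L\to 0$ is $L$, so the kernel of the unit is the crossed module $V(M\xrightarrow{\partial}L,\xi)=([L,M]\to L)$ with the restricted differential and action. This defines the normal subfunctor $V$ of $1_{\XMod(\A)}$ whose cokernel is the unit $\eta\colon 1_{\XMod(\A)}\to GF$, and by Corollary~5.7 in~\cite{EverVdL1} the reflector $F$ is a Birkhoff reflector precisely when $V$ preserves regular epimorphisms.

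It therefore remains to check that $V$ sends regular epimorphisms to regular epimorphisms. Given a regular epimorphism $(f,l)\colon(M\xrightarrow{\partial}L,\xi)\to(M'\xrightarrow{\partial'}L',\xi')$---equivalently, by Lemma~\ref{lemma:regular epimorphisms in XMOD(A)}, a pair of regular epimorphisms $f$ and $l$ in $\A$---I must show that $V(f,l)\colon([L,M]\to L)\to([L',M']\to L')$ is again a regular epimorphism. Using Lemma~\ref{lemma:regular epimorphisms in XMOD(A)} once more, since $l$ is already a regular epimorphism it suffices to see that the induced comparison $[L,M]\to[L',M']$ is one. This is exactly where the preservation property of the Higgins commutator enters: the map $f\rtimes l\colon M\rtimes L\to M'\rtimes L'$ is a regular epimorphism, and by Proposition~\ref{Higgins properties}(2) it carries the subobject $[L,M]$ onto $[l(L),f(M)]=[L',M']$, so the comparison $[L,M]\twoheadrightarrow[L',M']$ is a regular epimorphism.

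The only genuine subtlety---and the main point where care is needed---is that, in contrast with the fixed-base situations of Proposition~\ref{prop:birkhoff subcategory of actions} and Section~\ref{Section Ad Hoc}, here the object $L$ is allowed to vary. Thus $V$ must be treated as a functor of both arguments and $[L,M]$ as the bifunctorial commutator of Proposition~\ref{prop:higgins=coinvariance}, with preservation of regular epimorphisms invoked along the map $f\rtimes l$ that moves both the acting object and the object acted upon. Once this is phrased through Proposition~\ref{Higgins properties}(2) there is nothing further to verify, and all three clauses of Definition~\ref{defi:birkhoff subcategory} follow at once from Corollary~5.7 in~\cite{EverVdL1}.
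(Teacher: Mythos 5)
Your proof is correct and takes essentially the route the paper itself indicates: the pattern of Proposition~\ref{prop:birkhoff subcategory of actions}, reduction to Corollary~5.7 of~\cite{EverVdL1} via the kernel-of-the-unit subfunctor $V\colon(M\xrightarrow{\partial}L,\xi)\mapsto([L,M]\to L)$, and preservation of regular epimorphisms by the commutator. Your one deviation---invoking Proposition~\ref{Higgins properties}(2) along the regular epimorphism $f\rtimes l$ so as to account for the varying base object, where the paper simply writes that ``$[L,-]$ preserves regular epimorphisms''---is a welcome precision rather than a different method.
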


\begin{remark}
If $\XMod(\A)$ has enough projectives then so does $\A$, since $\A$ is included as a Birkhoff subcategory and Birkhoff reflectors preserve the property of existence of enough projectives. (Indeed, any left adjoint whose right adjoint preserves regular epimorphisms does so.) The converse (that $\XMod(\A)$ has enough projectives if so does $\A$) is a special case of Proposition 3.2 in \cite{EG-honfg}.
\end{remark}

Now we are able to apply Theorem~$3.5$ in~\cite{CVdL14} to obtain the following.

\begin{corollary}
\label{cor:theorem 3.5 in our particular case}
Suppose $\A$ is semi-abelian with~\SH\ and enough projectives. An internal crossed module $(\partial\colon{M\to L},\xi)$ of $\A$ is perfect (with respect to the Birkhoff subcategory $\Ab(\A)$ of $\XMod(\A)$) if and only if it admits a universal central extension (with respect to the Birkhoff subcategory $\Ab(\A)$ of $\XMod(\A)$).\noproof
\end{corollary}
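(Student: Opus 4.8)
The plan is simply to verify that the three hypotheses of \cite[Theorem~3.5]{CVdL14} are met in the present situation and then to specialise that theorem. Recall that it applies to a Birkhoff subcategory of a semi-abelian category with enough projectives, and asserts that an object is perfect with respect to the subcategory precisely when it admits a universal central extension.

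First I would fix the ambient category as $\XMod(\A)$. As recalled in Section~\ref{Section Internal crossed modules}, the assumptions that $\A$ is semi-abelian and satisfies \SH\ give the equivalence $\XMod(\A)\simeq\Grpd(\A)$; since internal groupoids in a semi-abelian category again form a semi-abelian category, $\XMod(\A)$ is itself semi-abelian, which supplies the pointed exact Mal'tsev environment that \cite[Theorem~3.5]{CVdL14} requires. Second, the proposition immediately preceding this corollary identifies $\Ab(\A)$---embedded into $\XMod(\A)$ through $G$---as a Birkhoff subcategory, with reflector $F$ sending $(\partial\colon M\to L,\xi)$ to $M/[L,M]$. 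By Definition~\ref{defi:perfect object in pointed setting} an object is $\Ab(\A)$-perfect exactly when this reflection vanishes, that is, when $M/[L,M]=0$, which is precisely the condition $[L,M]=M$.

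The remaining hypothesis is the existence of enough projectives, and this is where the assumption on $\A$ is used: by the remark just above, $\XMod(\A)$ has enough projectives because $\A$ does. With the semi-abelian structure, the Birkhoff subcategory $\Ab(\A)$, and enough projectives all in place, \cite[Theorem~3.5]{CVdL14} applies directly and yields the claimed equivalence between $\Ab(\A)$-perfectness and the existence of a universal $\Ab(\A)$-central extension.

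There is no genuine obstacle in the argument itself: all of the substance has been isolated into the earlier results, and the corollary is obtained by assembling them. The only point that deserves a moment's care is the confirmation that the abstract notion of perfectness used in \cite[Theorem~3.5]{CVdL14} (vanishing of the reflection) coincides with the concrete commutator condition $[L,M]=M$, which follows at once from the explicit description of the reflector $F$ established in the preceding proposition.
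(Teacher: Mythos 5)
Your proposal is correct and takes essentially the same route as the paper: the corollary carries no separate proof there precisely because it is the direct specialisation of \cite[Theorem~3.5]{CVdL14} to the Birkhoff subcategory $\Ab(\A)$ of the semi-abelian category $\XMod(\A)$, with the Birkhoff property supplied by the preceding proposition and the enough-projectives hypothesis transferred from $\A$ via the preceding remark, exactly as you assemble it. Your closing check that $\Ab(\A)$-perfectness amounts to $[L,M]=M$ via the explicit reflector $F$ is the same observation the paper records in Proposition~\ref{prop:equivalence between two concepts of perfect object}.
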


We still have to explain why the central extensions and the perfect objects in this sense agree with the definitions above. Once this is clear, we find Theorem~\ref{thm:perfect iff admits a universal central extension} as a consequence of Corollary~\ref{cor:theorem 3.5 in our particular case}---under the condition that enough projectives exist in $\A$. If $\A$ happens to lack projectives, then Theorem~\ref{thm:perfect iff admits a universal central extension} stays valid, of course.

\begin{proposition}
\label{prop:equivalence between two concepts of universal central extension}
Given an extension of a crossed module $(\partial\colon{M\to L},\xi)$, it is a (universal) central extension with respect to the Birkhoff subcategory 
\begin{equation}
\label{diag:first Birkhoff subcategory}
\xymatrix{
\Ab(\A) \ar@<.5ex>[r] & \XMod(\A), \ar@<.5ex>[l]
}
\end{equation}
if and only if it is a (universal) central extension with respect to the Birkhoff subcategory 
\begin{equation}
\label{diag:second Birkhoff subcategory}
\xymatrix{
\TXMod_L(\A) \ar@<.5ex>[r] & \XMod_L(\A). \ar@<.5ex>[l]
}
\end{equation}
\end{proposition}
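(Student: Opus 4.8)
The plan is to reduce both assertions---the one about central and the one about universal central extensions---to the single condition $[L,K_f]=0$ on the kernel of the extension, which Theorem~\ref{cor:equivalent condition to centrality} already identifies with centrality relative to the Birkhoff subcategory~\eqref{diag:second Birkhoff subcategory}. So it suffices to prove, on the one hand, that centrality with respect to~\eqref{diag:first Birkhoff subcategory} amounts to the same condition, and, on the other hand, that the two resulting categories of central extensions agree.

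First I would treat the central extensions. Fix a regular epimorphism $(f,1_L)\colon(M\xrightarrow{\partial}L,\xi)\to(M'\xrightarrow{\partial'}L,\xi')$ of $L$-crossed modules with kernel $K_f$ in $\A$. By Lemma~\ref{lemma:equivalent conditions to central} it is central with respect to~\eqref{diag:first Birkhoff subcategory} if and only if the first projection $\pi_1$ of its kernel pair---a split epimorphism, split by the diagonal---is a trivial extension for the reflector $F$ of~\eqref{diag:first Birkhoff subcategory}. Computing the kernel pair in $\XMod(\A)$ through the equivalence with groupoids, its domain is $(M\times_{M'}M\to L)$, and since $F$ sends $(N\to L)$ to $N/[L,N]$, triviality of the split epimorphism $\pi_1$ unwinds (exactly as in Lemma~\ref{Central via good kernel}, via \cite[Proposition~7]{Bou01}) to the requirement that the square
\[
\xymatrix{
M\times_{M'}M \ar[d]_-{\pi_1} \ar[r] & \frac{M\times_{M'}M}{[L,M\times_{M'}M]} \ar[d]\\
M \ar[r] & \frac{M}{[L,M]}
}
\]
be a pullback in $\A$. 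Comparing the kernels of the two vertical maps, this pullback condition is precisely the equality $K_f\wedge[L,M\times_{M'}M]=0$.

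It then remains to identify this intersection with $[L,K_f]$, and this computation is the technical heart of the argument. By Proposition~\ref{prop:any regular epimorphism of L-crossed module is a central extension with respect to Ab} we have $[M,K_f]=0$, so $\pi_1$ is a product projection and $M\times_{M'}M\cong M\times K_f$ equipped with the product $L$-action; in particular $M\times_{M'}M=\Delta M\vee K_f$. Applying the join decomposition formula of Proposition~\ref{Higgins properties} in the total object $(M\times_{M'}M)\rtimes L$ gives
\[
[L,M\times_{M'}M]=[L,\Delta M]\vee[L,K_f]\vee[L,\Delta M,K_f],
\]
and, since $[\Delta M,K_f]=0$, the \emph{Smith is Huq} condition forces the ternary term $[L,\Delta M,K_f]$ to vanish as well. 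Intersecting the remaining join $\Delta[L,M]\vee[L,K_f]$ with the factor $K_f$ of the product yields exactly $[L,K_f]$. Hence centrality with respect to~\eqref{diag:first Birkhoff subcategory} is again the condition $[L,K_f]=0$, and by Theorem~\ref{cor:equivalent condition to centrality} the two notions of central extension coincide.

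Finally I would deduce the statement for universal central extensions. The extra ingredient is that a central extension with respect to~\eqref{diag:first Birkhoff subcategory} automatically fixes the base: by Lemma~\ref{lemma:equivalent conditions to central} its kernel-pair projection is $F$-trivial, and as $F$ lands in crossed modules over $0$, the pullback defining triviality forces, on base objects, the projection $B\times_L B\to B$ to be an isomorphism, so the base component of the extension is a monomorphism; being also a regular epimorphism by Lemma~\ref{lemma:regular epimorphisms in XMOD(A)}, it is an isomorphism. Consequently every central extension with respect to~\eqref{diag:first Birkhoff subcategory} is (isomorphic to) one lying over~$L$, and a morphism between two such in $\XMod(\A)$ is the identity on $L$; together with the equivalence of centrality conditions established above this gives an equivalence
\[
\Centr_{(\partial\colon M\to L,\xi)}(\XMod(\A),\Ab(\A))\simeq\Centr_{(\partial\colon M\to L,\xi)}(\XMod_L(\A),\TXMod_L(\A)).
\]
An initial object of one category is therefore an initial object of the other, which is exactly the assertion for universal central extensions. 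I expect the intersection computation $K_f\wedge[L,M\times_{M'}M]=[L,K_f]$ to be the main obstacle, since it is where the product decomposition of the kernel pair and the \emph{Smith is Huq} condition must be combined carefully.
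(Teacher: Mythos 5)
Your proof is correct, and while it shares the paper's overall skeleton---reduce centrality to triviality of the kernel-pair projection $(\pi_1,1_L)$ via Lemma~\ref{lemma:equivalent conditions to central}, identify the resulting condition with $[L,K_f]=0$, and conclude by Theorem~\ref{cor:equivalent condition to centrality}---it handles the technical core by a genuinely different route. The paper first proves Lemma~\ref{lemma:equivalent condition to triviality} (an extension is trivial with respect to $\Ab(\A)\hookrightarrow\XMod(\A)$ iff both its base component and the induced map of commutators are isomorphisms) and then invokes protoadditivity of the functor $(\partial\colon M\to L,\xi)\mapsto[L,M]$ (Theorem~\ref{thm:F is protoadditive}) to obtain $K_{[\pi_1,1_L]}\cong[K_{\pi_1},L]\cong[K_f,L]$ abstractly. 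You instead unwind the pullback condition directly to $K_f\wedge[L,M\times_{M'}M]=0$ and compute this intersection by hand, decomposing the kernel pair as a product $M\times K_f$ (using $[M,K_f]=0$ from Proposition~\ref{prop:any regular epimorphism of L-crossed module is a central extension with respect to Ab}), applying the join decomposition of Proposition~\ref{Higgins properties} and killing the ternary term via \SH; in effect you re-prove, in situ, exactly the instance of protoadditivity the paper needs, by the same kind of argument that appears inside the proof of Theorem~\ref{thm:F is protoadditive} itself (there, $K_{[1_L,f]}=[L,K]$ because $[L,K,M]\leq[K,M]=0$). The paper's route is shorter given its already-established machinery and isolates a reusable statement; yours is more self-contained and makes the role of \SH\ explicit, at the cost of a few verifications you leave implicit but which do go through: that $[\Delta M,K_f]=0$ (both projections of the kernel pair kill this commutator and $\langle\pi_1,\pi_2\rangle$ is a monomorphism), that $\Delta M$ and $K_{\pi_1}$ are normal in $(M\times_{M'}M)\rtimes L$ so that \SH\ applies, and that $(\Delta[L,M]\vee[L,K_f])\wedge K_f=[L,K_f]$, which again uses $[\Delta M,K_f]=0$ to see the join as a product of subobjects of the two factors. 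Finally, your treatment of the universal half---forcing the base component of any $\Ab(\A)$-central extension to be an isomorphism via the base-level pullback in the kernel pair, then identifying the two categories of central extensions over the fixed object so that initial objects correspond---usefully fleshes out what the paper dismisses as ``easy to see'', and replaces the paper's argument for the base isomorphism (pulling back along a trivialising extension, as in Lemma~\ref{lemma:equivalent condition to triviality}) by a direct kernel-pair argument.
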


Before we prove this, we need a lemma:

\begin{lemma}
\label{lemma:equivalent condition to triviality}
Consider an extension of crossed modules 
\begin{equation}
\label{eq:extension of crossed modules}
(M'\xrightarrow{\partial'}L',\xi')\xrightarrow{(f,l)}(M\xrightarrow{\partial}L,\xi)
\end{equation}
which is central with respect to~\eqref{diag:first Birkhoff subcategory}. Then $l$ is an isomorphism and $(f,l)$ can be considered as an extension of $L$-crossed modules.
\end{lemma}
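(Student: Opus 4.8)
The plan is to prove that $l$ is at once a regular epimorphism and a monomorphism in $\A$; since $\A$ is semi-abelian, this forces $l$ to be an isomorphism, after which we identify $L'$ with $L$ along $l$, transport the action $\xi'$, and thereby read $(f,l)$ as a morphism in $\XMod_L(\A)$. The regular-epimorphism half is immediate: by definition an extension is a regular epimorphism in $\XMod(\A)$, so Lemma~\ref{lemma:regular epimorphisms in XMOD(A)} already yields that $l$ (and $f$) is a regular epimorphism in $\A$. Everything therefore reduces to showing that $l$ is monic.

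For this I would exploit centrality together with the fact that the reflector $F$ crushes the base object to $0$. Since $\XMod(\A)$ is semi-abelian, hence exact protomodular, Lemma~\ref{lemma:equivalent conditions to central} applies: the extension $(f,l)$ is central with respect to \eqref{diag:first Birkhoff subcategory} exactly when the first kernel-pair projection
\[
(\pi_1,\cdot)\colon \KP(f,l)\longrightarrow (M'\xrightarrow{\partial'}L',\xi')
\]
is an $\Ab(\A)$-trivial extension. The kernel pair in $\XMod(\A)$ is formed componentwise, so its base object is the kernel pair $L'\times_L L'$ of $l$, and the base component of $(\pi_1,\cdot)$ is the ordinary projection $L'\times_L L'\to L'$. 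Triviality says precisely that the naturality square of the unit $\eta$ at $(\pi_1,\cdot)$ is a pullback in $\XMod(\A)$.

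The decisive observation is that the functor $\XMod(\A)\to\A$ sending a crossed module to its base object $L$ corresponds, under $\XMod(\A)\simeq\Grpd(\A)$, to the object-of-objects functor $\Grpd(\A)\to\A$; the latter has both a left and a right adjoint and therefore preserves all limits. Applying it to the triviality pullback, and using that $GF$ sends every crossed module to one with base object $0$, we obtain a pullback in $\A$ of the form
\[
\xymatrix{
L'\times_L L' \ar[r] \ar[d]_-{\pi_1} & 0 \ar[d]\\
L' \ar[r] & 0.
}
\]
Because $0$ is the zero object, this forces $L'\times_L L'\cong L'\times_0 0\cong L'$ along $\pi_1$, so $\pi_1$ is an isomorphism; equivalently, the kernel pair of $l$ is trivial, i.e.\ $l$ is a monomorphism. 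Combining the two halves, $l$ is a monic regular epimorphism in the semi-abelian category $\A$ and hence an isomorphism, and identifying $L'$ with $L$ along $l$ exhibits $(f,l)$ as an extension of $L$-crossed modules. The only step needing real care is the reduction through the kernel pair and the verification that the base functor preserves the limits in play; once that is secured, the collapse of the base under $GF$ does all the work and no commutator computation is required.
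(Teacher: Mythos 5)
Your proof is correct, and it takes a genuinely different route from the paper's. The paper argues directly from Definition~\ref{defi:central extensions}: it first characterises the $\Ab(\A)$-trivial extensions in $\XMod(\A)$ completely (namely, $(f,l)$ is trivial if and only if \emph{both} $l$ and the induced morphism $[f,l]\colon[M',L']\to[M,L]$ are isomorphisms, using that pullbacks in $\XMod(\A)$ are computed levelwise), then takes an arbitrary witness $(g,k)$ whose pullback $(\overline{f},\overline{l})$ of $(f,l)$ is trivial, gets $\overline{l}$ iso, and deduces that $l$ is mono because monomorphisms are reflected by pulling back along the regular epimorphism $k$. You instead invoke Lemma~\ref{lemma:equivalent conditions to central} (legitimate, since $\XMod(\A)$ is semi-abelian, hence exact protomodular) to replace the arbitrary witness by the canonical one---the kernel pair of $(f,l)$ itself---so that the base component of the trivialised projection is $\pi_1\colon L'\times_L L'\to L'$, and the pullback over the collapsed base $0$ forces $\pi_1$ to be an isomorphism, which \emph{is} the statement that $l$ is mono; no reflection-of-monos step and no commutator condition are needed. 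Your limit-preservation argument for the base functor (left adjoint: discrete groupoid; right adjoint: codiscrete groupoid $X\times X\rightrightarrows X$, which exists since $\A$ is Mal'tsev) is sound, though slightly roundabout---the paper's simpler observation that pullbacks in $\XMod(\A)$ are computed levelwise already gives what you need. One trade-off worth noting: the paper's heavier characterisation of trivial extensions is not wasted work, since the condition ``$[f,l]$ is an isomorphism'' is reused in the proof of Proposition~\ref{prop:equivalence between two concepts of universal central extension} (the equivalence of (ii) and (iii) there); your leaner argument proves the lemma but would leave that later step without its ingredient.
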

\begin{proof}
Let us start by proving that a morphism as in~\eqref{eq:extension of crossed modules} is a trivial extension with respect to~\eqref{diag:first Birkhoff subcategory} if and only if
\begin{enumerate}
	\item the morphism $l$ is an isomorphism,
	\item $\left[f,l\right]\colon \left[M',L'\right]\to \left[M,L\right]$ is an isomorphism.
\end{enumerate}
By definition $(f,l)$ is trivial with respect to~\eqref{diag:first Birkhoff subcategory}, if and only if the cube on the right in Figure~\ref{Fig:Commutator}
\begin{figure}
\resizebox{.5\textwidth}{!}
{$\xymatrix@!0@=4em{
\left[M',L'\right]
\ar[rr]
\ar[rd]_-{\left[f,l\right]}
&&
M'
\ar[rr]
\ar[dd]|(0.5){\hole}_(0.33){\partial'}
\ar[rd]|-{f}
&&
\frac{M'}{\left[M',L'\right]}
\ar[dd]|-{\hole}
\ar[rd]
\\
&
\left[M,L\right]
\ar[rr]
&&
M
\ar[rr]
\ar[dd]_(0.33){\partial}
&&
\frac{M}{\left[M,L\right]}
\ar[dd]
\\
&&
L'
\ar[rr]|(0.5){\hole}
\ar[rd]_-{l}
&&
0
\ar@{=}[rd]
\\
&&&
L
\ar[rr]
&&
0
}$}
\caption{Induced morphism of commutators.}\label{Fig:Commutator}	
\end{figure}
is a pullback in $\XMod(\A)$. Since pullbacks are computed levelwise in~$\XMod(\A)$, this is the same as asking that both the top and the bottom faces are pullbacks in $\A$. Now the top face is a pullback if and only if $\left[f,l\right]$ is an isomorphism; the bottom face is a pullback if and only if $l$ is an isomorphism as well.

The next step is showing that for any extension~\eqref{eq:extension of crossed modules} which is central with respect to~\eqref{diag:first Birkhoff subcategory}, $l$ is an isomorphism. In order to do so, recall that $(f,l)$ is central when there exists an extension 
\[
(\widetilde{M}\xrightarrow{\widetilde{\partial}}\widetilde{L},\widetilde{\xi})\xrightarrow{(g,k)}(M\xrightarrow{\partial}L,\xi)
\]
such that the pullback $(\overline{f},\overline{l})$ of $(f,l)$ along $(g,k)$ is trivial. By looking at the pullback
\[
\resizebox{.3\textwidth}{!}
{$\xymatrix@!0@=4em{
\overline{M}
\ar[rr]
\ar[dd]_(0.33){\overline{\partial}}
\ar[rd]
&&
\widetilde{M}
\ar[dd]|-{\hole}^(0.33){\widetilde{\partial}}
\ar[rd]
\\
&
M'
\ar[rr]
\ar[dd]_(0.33){\partial'}
&&
M
\ar[dd]^(0.33){\partial}
\\
\overline{L}
\ar[rr]^(0.66){\overline{l}}|(0.5){\hole}
\ar[rd]
&&
\widetilde{L}
\ar[rd]
\\
&
L'
\ar[rr]_-{l}
&&
L
}$}
\]
and by using the condition for trivial extensions obtained above, we know that $\overline{l}$ is an isomorphism. Hence $l$ is an isomorphism as well: on the one hand, $l$ is a regular epimorphism by hypothesis, being part of an extension; on the other hand, it is a monomorphism, since $\overline{l}$ is so and because pullbacks reflect monomorphisms.
\end{proof}

\begin{proof}[Proof of Proposition~\ref{prop:equivalence between two concepts of universal central extension}]
We already know that in order for an extension to be central with respect to~\eqref{diag:first Birkhoff subcategory}, it has to have an isomorphism in the second component. Let us therefore fix an extension $(f,1_L)$. Consider its kernel pair in Figure~\ref{FigKernelPair} 
\begin{figure}
\resizebox{.3\textwidth}{!}
{$
\xymatrix@!0@=4em{
M'\times_MM'
\ar[rr]^-{\pi_1}
\ar[dd]_-{\partial''}
\ar[rd]_-{\pi_2}
&&
M'
\ar[dd]|-{\hole}^(0.33){\partial'}
\ar[rd]|-{f}
\\
&
M'
\ar[rr]^(0.33){f}
\ar[dd]_(0.33){\partial'}
&&
M
\ar[dd]^-{\partial}
\\
L
\ar@{=}[rr]|(0.5){\hole}
\ar@{=}[rd]
&&
L
\ar@{=}[rd]
\\
&
L
\ar@{=}[rr]
&&
L
}$}
\caption{The kernel pair of $(f,1_L)$.}\label{FigKernelPair}
\end{figure}
and in particular one of the two projections $(\pi_1,1_L)$. We will use the following chain of equivalent conditions to obtain the claim:
\begin{tfae}
\item $(f,1_L)$ is central with respect to~\eqref{diag:first Birkhoff subcategory},
\item $(\pi_1,1_L)$ is trivial with respect to~\eqref{diag:first Birkhoff subcategory},
\item $K_{[\pi_1,1_L]}=0$,
\item $[K_{\pi_1},L]=0$,
\item $[K_f,L]=0$,
\item $(f,1_L)$ is central with respect to~\eqref{diag:second Birkhoff subcategory}.
\end{tfae}
The equivalence between (i) and (ii) is given by the fact that the extension $(f,1_L)$ is central with respect to~\eqref{diag:first Birkhoff subcategory} if and only if it is normal with respect to~\eqref{diag:first Birkhoff subcategory}.

To show (ii) if and only if (iii) we use Lemma~\ref{lemma:equivalent condition to triviality} and the fact that $[\pi_1,1_L]$ is already a split epimorphism by construction (it is defined through a kernel pair): this means that it is an isomorphism if and only if its kernel $K_{[\pi_1,1_L]}$ is trivial. Now consider the diagram 
\begin{equation}
\label{diag:application of protoadditivity}
\vcenter{
\xymatrixcolsep{3pc}
\xymatrix{
\left[K_{\pi_1},L\right] \ar[d] \ar[r]^-{k_{[\pi_1,1_L]}} & \left[M'\times_MM',L\right] \ar[d] \ar@<.5ex>[r]^-{[\pi_1,1_L]} & \left[M',L\right] \ar[d] \ar@<.5ex>[l]^-{[\Delta_{M'},1_L]}\\
K_{\pi_1} \ar[r]_-{k_{\pi_1}} & M'\times_MM' \ar@<.5ex>[r]^-{\pi_1} & M' \ar@<.5ex>[l]^-{\Delta_{M'}}
}
}
\end{equation}
The functor that sends an $L$-crossed module $(\partial\colon{M\to L},\xi)$ to $\left[M,L\right]$ is protoadditive (see Theorem~\ref{thm:F is protoadditive}) and hence the first row in~\eqref{diag:application of protoadditivity} is again a split short exact sequence: this means that $K_{\left[\pi_1,1\right]}\cong \left[K_{\pi_1},L\right]$, that is (iii) if and only if (iv). The equivalence between (iv) and (v) is simply given by the vertical isomorphism on the left of the diagram
\[
\xymatrixcolsep{3pc}
\xymatrix{
K_{\pi_1} \ar@{.>}[d] \ar[r]^-{k_{\pi_1}} & M'\times_MM' \pullback \ar[d]_-{\pi_2} \ar[r]^-{\pi_1} & M' \ar[d]^-{f}\\
K_{f} \ar[r]_-{k_f} & M' \ar[r]_-{f} & M
}
\]
due to the fact that the square on the right is a pullback by construction. The last step is given by Corollary~\ref{cor:equivalent condition to centrality}.

Finally, it is easy to see that a central extension is universal with respect to~\eqref{diag:first Birkhoff subcategory} if and only if it is universal with respect to~\eqref{diag:second Birkhoff subcategory}.
\end{proof}

\begin{proposition}
\label{prop:equivalence between two concepts of perfect object}
An $L$-crossed module $(\partial\colon{M\to L},\xi)$ is perfect in the sense of Definition~\ref{defi:perfect object in XMod_L(A) external} if and only if it is perfect with respect to the Birkhoff subcategory $\Ab(\A)$ when seen as an object in $\XMod(\A)$.
\end{proposition}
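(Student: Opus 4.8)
The plan is to unwind both notions of perfectness and observe that each reduces to the single equality $[L,M]=M$ of subobjects of $M$, so that the two coincide on the nose. No genuine obstacle is expected here; the proof is essentially a bookkeeping exercise matching definitions, and the only point requiring care is that the commutator $[L,M]$ occurring in the two places is literally the same object.

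First I would recall, from Definition~\ref{defi:perfect object in XMod_L(A) external}, that $(\partial\colon{M\to L},\xi)$ is perfect with respect to $\TXMod_L(\A)$ exactly when its underlying action is $\TrivAct_L(\A)$-perfect. By Remark~\ref{rmk:perfect L-actions} this happens precisely when the normal closure $\cl_X(L)$ of $L$ in $X=M\rtimes_\xi L$ is all of $X$, which is there shown to be equivalent to the equality $[L,M]=M$ (using Proposition~\ref{prop:higgins=coinvariance} to identify the coinvariance commutator $\llbracket L,M\rrbracket$ with the Higgins commutator $[L,M]$). So this side of the statement translates directly into $[L,M]=M$.

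Next I would treat the second notion: viewing $(\partial\colon{M\to L},\xi)$ as an object of $\XMod(\A)$ and applying Definition~\ref{defi:perfect object in pointed setting} relative to the Birkhoff subcategory $\Ab(\A)$, perfectness means that the reflection of the crossed module is the zero object of $\Ab(\A)$. By the explicit description of the reflector $F$ given at the start of this section, that reflection is the abelian object $M/[L,M]$ — abelian by Lemma~\ref{lemma:M over the commutator is abelian}, and built from the very same commutator $[L,M]$. Since $[L,M]\leq M$ is a subobject, its quotient $M/[L,M]$ is the zero object if and only if the inclusion $[L,M]\to M$ is an isomorphism, i.e. $[L,M]=M$.

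Finally I would conclude by comparing the two computations: both notions of perfectness are equivalent to $[L,M]=M$, hence equivalent to each other, which establishes the proposition. The whole argument is a matter of checking that the commutator used to define the coinvariants reflector on $\Act_L(\A)$ and the one used by the reflector $F\colon\XMod(\A)\to\Ab(\A)$ agree; this is exactly the content already secured by Proposition~\ref{prop:higgins=coinvariance}, so nothing beyond the definitions is needed.
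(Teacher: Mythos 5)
Your proof is correct and takes essentially the same route as the paper's: both notions of perfectness are reduced to the single equality $[L,M]=M$, the first directly from Definition~\ref{defi:perfect object in XMod_L(A) external} and the second by noting that the reflection $F(\partial\colon{M\to L},\xi)={M}/{[L,M]}$ vanishes precisely when $[L,M]=M$. The paper's version is merely terser, reading $[L,M]=M$ straight off the definition rather than re-deriving it through Remark~\ref{rmk:perfect L-actions} and Proposition~\ref{prop:higgins=coinvariance} as you do.
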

\begin{proof}
The crossed module $(\partial\colon{M\to L},\xi)$ is perfect with respect to Definition~\ref{defi:perfect object in XMod_L(A) external} if and only if $\left[L,M\right]=M$. This amounts to ${M}/{\left[L,M\right]}=0$, which in turn is the same as $F(\partial\colon{M\to L},\xi)=0$, that is perfectness with respect to Definition~\ref{defi:perfect object in pointed setting}.
\end{proof}

\section*{Acknowledgements}
We would like to thank Alan S.~Cigoli, Marino Gran, Sandra Mantovani, Giuseppe Metere and Andrea Montoli for helpful comments and suggestions. The first author thanks the Université catholique de Louvain for its kind hospitality during his stays in Louvain-la-Neuve. The second author is grateful to the Università degli Studi di Milano and the Università degli Studi di Palermo for their kind hospitality during his stays in Milan and in Palermo.

\renewcommand{\MR}[1]{}


\end{document}